\documentclass{amsart}

\usepackage{amssymb}
\usepackage{graphicx}
\usepackage{amsthm}
\usepackage{parskip}
\usepackage{float}
\usepackage{tikz}
\usepackage[foot]{amsaddr}

\makeatletter
\@namedef{subjclassname@2010}{%
  \textup{2010} Mathematics Subject Classification}
\makeatother

\numberwithin{equation}{section}

\newcommand\ZZ{\mathbb{Z}}
\newcommand\ZZT{\mathbb{Z}[\sqrt{2}]}
\newcommand\ZZP{\mathbb{Z}[\sqrt{-2p}]}

\newcommand\CC{\mathbb{C}}
\newcommand\RR{\mathbb{R}}
\newcommand\QQ{\mathbb{Q}}
\newcommand\QQP{\mathbb{Q}(\sqrt{-8p})}
\newcommand\QQT{\mathbb{Q}(\sqrt{2})}
\newcommand\Sd{\sqrt{d}}
\newcommand\FF{\mathbb{F}}
\newcommand\OO{\mathcal{O}}
\newcommand\DD{\mathcal{D}}

\newcommand\RRR{\mathcal{R}}
\newcommand\TTT{\mathcal{T}}
\newcommand\BB{\mathcal{B}}
\newcommand\oa{\overline{a}}
\newcommand\ob{\overline{b}}
\newcommand\ove{\overline{\varepsilon}}
\newcommand\opi{\overline{\pi}}
\newcommand\onu{\overline{\nu}}
\newcommand\ogamma{\overline{\gamma}}
\newcommand\ow{\overline{w}}
\newcommand\oz{\overline{z}}
\newcommand\aaa{\mathfrak{a}}
\newcommand\oaaa{\overline{\mathfrak{a}}}
\newcommand\bb{\mathfrak{b}}
\newcommand\obb{\overline{\mathfrak{b}}}
\newcommand\ccc{\mathfrak{c}}
\newcommand\uu{\mathfrak{u}}
\newcommand\mm{\mathfrak{m}}
\newcommand\ff{\mathfrak{f}}
\newcommand\nn{\mathfrak{n}}
\newcommand\dd{\mathfrak{d}}
\newcommand\pp{\mathfrak{p}}
\newcommand\opp{\overline{\mathfrak{p}}}
\newcommand\PP{\mathfrak{P}}

\newcommand\rrr{\mathfrak{r}}
\newcommand\pt{\mathfrak{t}}
\newcommand\TT{\mathfrak{T}}
\newcommand\ps{\mathfrak{s}}

\newcommand\rk{\mathrm{rk}}
\newcommand\CL{\mathrm{Cl}}
\newcommand\Vol{\mathrm{Vol}}
\newcommand\SL{\mathrm{SL}_2(\mathbb{Z})}

\newcommand\Gal{\mathrm{Gal}}
\newcommand\Norm{\mathrm{Norm}}

\newcommand\mul{\mathrm{m}}
\newcommand\Disc{\mathrm{Disc}}
\newcommand\diam{\mathrm{diam}}

\newcommand\ve{\varepsilon}

\newcommand\Kt{K_{\mathfrak{t}}}
\newcommand\Gt{G_{\mathfrak{T}}}
\newcommand\Lt{L_{\mathcal{T}}}
\newcommand\ONE{\textbf{1}}

\theoremstyle{plain} \newtheorem{theorem}{Theorem}
\theoremstyle{plain} 
\theoremstyle{plain} \newtheorem{lemma}{Lemma}
\theoremstyle{plain} 
\theoremstyle{plain} \newtheorem{conjecture}{Conjecture}
\theoremstyle{plain} 
\theoremstyle{plain} 
\theoremstyle{plain}\newtheorem{prop}{Proposition}
\theoremstyle{remark} \newtheorem*{remark}{Remark}
\theoremstyle{remark} 
\theoremstyle{remark} 
\theoremstyle{remark} 

\DeclareMathOperator*{\sumsum}{\sum\sum}
\DeclareMathOperator*{\suma}{\sum{}^{\ast}}

\title{On the $16$-rank of class groups of $\QQP$ for $p\equiv -1\bmod 4$}
\date{\today}
\author{Djordjo Milovic}
\email{dmilovic@math.ias.edu}
\address{Institute for Advanced Study, Einstein Drive, Princeton, NJ 08540, USA}

\begin{document}

\begin{abstract}
We use a variant of Vinogradov's method to show that the density of the set of prime numbers $p\equiv -1\bmod~4$ for which the class group of the imaginary quadratic number field $\QQP$ has an element of order $16$ is equal to $1/16$, as predicted by the Cohen-Lenstra heuristics.
\end{abstract}

\subjclass[2010]{11N45, 11R29, 11R44, 11N36}

\maketitle

\section{Introduction and motivation}
Let $\CL(D)$ denote the (narrow) class group of the quadratic number field $\QQ(\sqrt{D})$ of discriminant $D$, and let $h(D) := \#\CL(D)$ denote its class number. Although the class group $\CL(D)$ encodes important arithmetic information about the ring of integers in $\QQ(\sqrt{D})$, very little is known about its average behavior as $D$ varies in some natural family of discriminants. The $2$-part of $\CL(D)$ is perhaps the most accessible. In \cite{Gauss}, Gauss proved that the $2$-rank (in other words, the ``width'' of the $2$-part) is given by the formula
$$
\rk_2\CL(D) := \dim_{\FF_2}(\CL(D)/2\CL(D)) = \omega(D) - 1,
$$
where $\omega(D)$ is the number of distinct prime divisors of $D$. In particular, if $\omega(D) = 1$, then the $2$-part of $\CL(D)$ is trivial, so class groups with the simplest non-trivial $2$-parts arise from discriminants that have exactly two distinct prime divisors. We focus on one family of such discriminants, namely the family $\{-8p\}_p$, where $p$ ranges over prime numbers congruent to $-1$ modulo $4$. The $2$-part of $\CL(-8p)$ is cyclic and hence completely determined by the highest power of $2$ dividing $h(-8p)$. Therefore a natural problem is to determine, for each integer $k\geq 1$, the natural density of the set of prime numbers $p\equiv -1\bmod 4$ such that $2^k$ divides $h(-8p)$. In that vein, for each integer $k\geq 1$ and real number $X>2$, we set
$$
\rho(X; 2^k) := \frac{\#\{p\leq X:\ p\equiv -1\bmod 4,\ 2^k|h(-8p)\}}{\#\{p\leq X\}},
$$
and we define $\rho(2^k) := \lim_{X\rightarrow \infty}\rho(X; 2^k)$, if the limit exists. R\'{e}dei's \cite{Redei} and Reichardt's \cite{Reichardt} work from the 1930's implies that $4$ divides $h(-8p)$ if and only if $p\equiv -1\bmod 8$ and that $8$ divides $h(-8p)$ if and only if $p\equiv -1\bmod 16$. It now follows from the \v{C}ebotarev Density Theorem that $\rho(2^k) = 2^{-k}$ for $1\leq k\leq 3$. We prove that $\rho(16) = \frac{1}{16}$. More precisely, we prove the following equidistribution result.
\begin{theorem}\label{mainCor}
For a prime number $p\equiv -1\bmod 16$, let $e_p = 1$ if $16$ divides $h(-8p)$ and let $e_p = -1$ otherwise. Then for all $X>0$, we have
$$
\sum_{\substack{p\leq X \\p\equiv -1\ (16)}}e_p\ll X^{\frac{199}{200}},
$$
where the implied constant is absolute. In particular, the natural density of the set of prime numbers $p$ such that $p\equiv -1\bmod 4$ and such that $16$ divides $h(-8p)$ is equal to $\frac{1}{16}$.
\end{theorem}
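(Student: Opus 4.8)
The plan is to split the argument into an algebraic reduction, which produces an explicit ``governing'' symbol for the condition $16\mid h(-8p)$, and an analytic estimate, which is a bilinear‑forms (Vinogradov‑type) bound for the resulting sum of residue symbols over primes.

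\emph{Step 1: an explicit criterion.} By Gauss' genus theory $\rk_2\CL(-8p)=1$, and by the quoted results of R\'edei and Reichardt the cyclic $2$‑part of $\CL(-8p)$ has order at least $8$ whenever $p\equiv-1\bmod16$; the question is therefore whether its unique class of order dividing $8$ is an $8$th power. Iterating the R\'edei--Reichardt descent (equivalently, working inside the relevant governing ring‑class fields and applying R\'edei reciprocity) one should be able to express $e_p$ as a product of quadratic residue symbols attached to a prime ideal $\pp$ lying above $p$ in the ring of integers $\OO$ of a fixed number field $K$, naturally a subfield of $\QQ(\zeta_{16})$; the decisive factor is a \emph{spin} $[\alpha/\pp']$, where $\alpha$ is a suitably normalized generator of a power of $\pp$ and $\pp'$ runs through the conjugates of $\pp$, the remaining factors depending only on $p$ modulo a fixed power of $2$. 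The crucial structural point --- and the reason the \v{C}ebotarev Density Theorem and plain genus theory do not resolve the problem --- is that $\alpha$ has archimedean size $\asymp\sqrt p$, so $e_p$ is not a function of $p$ modulo any fixed integer and genuinely oscillates with $p$.

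\emph{Step 2: reduction to bilinear sums.} Write $S(X):=\sum_{p\leq X,\ p\equiv-1\,(16)}e_p$. Parametrizing $p$ by the norm form of $\OO$ (so that, up to units, $p$ is a value of a fixed integral binary quadratic form, or a norm of an ideal of a quadratic subfield, with the two conjugate factors of size $\asymp\sqrt X$), replacing the indicator of ``$p$ prime'' by a combinatorial identity of Vaughan or Heath--Brown type, and expanding the spin by quadratic reciprocity in $\OO$, one reduces $S(X)$ to the control of three pieces: (i) a contribution of the trivial character, which must be shown to vanish --- this amounts to checking that the symbol stays non‑principal on every fixed ray class, i.e.\ that there is no exceptional real character; (ii) ``Type I'' linear sums $\sum_{m}a_m\sum_{n\leq Y}[\,\cdot\,/\,\cdot\,]$; and (iii) ``Type II'' bilinear sums $\sum_{m\sim M}\sum_{n\sim N}a_mb_n\,[\,\cdot\,/\,\cdot\,]$ with $MN\asymp X$ and $M,N$ confined to ranges dictated by the sieve identity.

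\emph{Step 3: the estimates, and the main obstacle.} For the Type I sums one simply carries out the inner sum: in the $n$‑aspect the spin symbol is a non‑principal Hecke character to a modulus of controlled size, so the P\'olya--Vinogradov inequality (or, equivalently, the Weil bound for the character sum on $\OO/\mm$) gives a power saving as soon as $N$ is not too small. For the Type II sums one runs the Cauchy--Schwarz/Vinogradov device: open the square in one variable, apply reciprocity so that the long variable sits inside the argument of the symbol, and bound the resulting sum of products of two residue symbols by an equidistribution input --- a mean‑value estimate for the pertinent family of characters, the quadratic large sieve, or a bespoke double‑oscillation bound in the spirit of the work of Friedlander--Iwaniec--Mazur--Rubin on the spin of prime ideals. \textbf{The Type II estimate is the heart of the matter and the main obstacle:} the cancellation it provides is slight, and it is the tension between this weak bound and the range restrictions forced by the sieve identity that yields the modest exponent $\tfrac{199}{200}$ rather than a cleaner $X^{1-\delta}$; the whole argument has to be organized so that every pair $(M,N)$ that occurs lands in a window where one of the two estimates is decisive. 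Combining the three contributions gives $S(X)\ll X^{199/200}$ with an absolute implied constant. Since $\#\{p\leq X:\ p\equiv-1\,(16)\}\sim\tfrac18\pi(X)$ and $16\mid h(-8p)$ already forces $p\equiv-1\bmod16$, the bound $S(X)=o(\pi(X))$ shows that $e_p=1$ for asymptotically half of the primes $p\equiv-1\bmod16$, i.e.\ $\rho(16)=\tfrac1{16}$.
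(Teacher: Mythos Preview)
Your high-level strategy is the paper's: extract a Jacobi-symbol criterion for $16\mid h(-8p)$ from the factorisation $p=u^2-2v^2$ in $\ZZ[\sqrt 2]$, then feed the resulting spin into Vinogradov's method (in the Friedlander--Iwaniec--Mazur--Rubin form), handling Type~I by P\'olya--Vinogradov and Type~II by a bilinear argument. But your sketch skips the two specific ideas that make this work, and without them the argument does not go through.

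\textbf{The unit group.} Your phrase ``suitably normalised generator'' hides the main algebraic obstacle. The unit group $\ZZ[\sqrt 2]^\times=\langle -1,\varepsilon\rangle$ with $\varepsilon=1+\sqrt 2$ is infinite, and the symbol $[u+v\sqrt 2]:=\bigl(\tfrac{v}{u}\bigr)$ is \emph{not} invariant under multiplication by $\varepsilon^2$. So there is no evident way to attach a spin to an \emph{ideal} of $\ZZ[\sqrt 2]$, which is what the sieve machinery needs. The paper's key algebraic input (its Proposition~\ref{generalLW2}) is the nontrivial identity $[w]=[\varepsilon^8 w]$, proved by interpreting $\bigl(\tfrac{v}{u}\bigr)\chi(u)$ as an Artin symbol in a ring-class-field extension of $\QQ(\sqrt{-2n})$; only after this can one average over $w,\varepsilon^2w,\varepsilon^4w,\varepsilon^6w$ (weighted by characters mod $16$) to obtain a well-defined sequence $a_\nn$ on ideals. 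Your Step~2 cannot start until this is in hand.

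\textbf{Twisted multiplicativity.} For the Type~II bound the decisive structural fact is a factorisation $[wz]\sim[w][z]\gamma(w,z)$ up to a sign depending only on $w,z\bmod 16$, where $\gamma(w,z)=\bigl(\tfrac{\overline w\,\overline z}{(w)}\bigr)$ is a quadratic-residue symbol that is genuinely bilinear and satisfies its own reciprocity law $\gamma(w,z)\gamma(z,w)=\mul(wz)$. The double oscillation of $\gamma$ (the paper's Lemma~\ref{keycancellation}) is what produces the bilinear saving after Cauchy--Schwarz and H\"older amplification. Your ``open the square, apply reciprocity, invoke the quadratic large sieve or FIMR'' is the right instinct, but the factorisation itself (the paper's Proposition~\ref{almostMult}) is a delicate Jacobi-symbol computation specific to this symbol; it is not a formal consequence of reciprocity and is what actually isolates the oscillating kernel $\gamma$.

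In short: the skeleton is right and matches the paper, but the two load-bearing bones --- invariance of the spin under $\varepsilon^8$, and the twisted-multiplicativity formula producing the bilinear kernel $\gamma$ --- are missing from your plan, and neither is routine.
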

In other words, if we define the $2^k$-rank of $\CL(D)$ to be
$$\rk_{2^k}\CL(D)= \dim_{\FF_2}(2^{k-1}\CL(D)/2^k\CL(D)),$$
Theorem~\ref{mainCor} states that the natural density of the set of prime numbers $p$ such that $p\equiv -1\bmod 4$ and such that $\rk_{16}\CL(-8p) = 1$ is equal to $\frac{1}{16}$. Aside from giving a numerical density for the $16$-rank, the main novelty of Theorem~\ref{mainCor} is that the power-saving in $X$ gives strong evidence that the behavior of the $16$-rank is different from the behavior of the lower $2$-power ranks in a very essential way. In the terminology used by Serre to describe equidistribution phenomena \cite{SerreMinerva}, the $8$-rank in families of quadratic fields of the type $\{\CL(dp)\}_{p}$, where $d$ is a fixed integer and $p$ varies among primes such that $dp$ is a discriminant, is ``motivated'' -- that is, $\rk_8\CL(dp)$ is given by the trace of the Frobenius conjugacy class of $p$ in a Galois representation associated to a motive depending only on $d$; however, the $16$-rank in the family $\{\CL(-8p)\}_{p\equiv -1\bmod 4}$ appears not to be ``motivated.'' We now explain the various consequences of Theorem~\ref{mainCor} in more detail.
\subsection{Cohen-Lenstra heuristics}
Cohen and Lenstra \cite{CohenLenstra} proposed a heuristic model to predict the average behavior of class groups. They stipulate that an abelian group $G$ occurs as the class group of an imaginary quadratic field with probability proportional to the inverse of the size of the automorphism group of $G$. Hence the cyclic group of order $2^{k-1}$ should occur as the $2$-part of the class group of an imaginary quadratic number field twice as often as the cyclic group of order $2^k$. As noted above, the $2$-part of the class group $\CL(-8p)$ is cyclic, and so it is natural to make the following conjecture.
\begin{conjecture}\label{mainConj}
For all $k\geq 1$, we have $\rho(2^k) = 2^{-k}$.
\end{conjecture}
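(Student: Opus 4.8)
\emph{A strategy toward Conjecture~\ref{mainConj}.} The natural route is induction on $k$, and the small cases are already in hand. For $k=1$, Gauss's genus theory gives $\rk_2\CL(-8p)=\omega(-8p)-1=1$, so $2\mid h(-8p)$ for \emph{every} prime $p\equiv-1\bmod4$ and hence $\rho(2)=\tfrac12$; for $k\in\{2,3\}$ the R\'edei--Reichardt criteria turn ``$4\mid h(-8p)$'' and ``$8\mid h(-8p)$'' into the congruence conditions $p\equiv-1\bmod8$ and $p\equiv-1\bmod16$, so that $\rho(4)=\tfrac14$ and $\rho(8)=\tfrac18$ follow from Dirichlet's theorem (equivalently \v{C}ebotarev for the appropriate abelian extensions of $\QQ$). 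The case $k=4$ is Theorem~\ref{mainCor}. It remains to treat $k\geq5$, and the plan is to iterate the mechanism behind Theorem~\ref{mainCor}: restrict to the set of primes $p$ with $2^{k-1}\mid h(-8p)$, which has density $2^{-(k-1)}$ by the inductive hypothesis; produce an explicit symbol $e_p^{(k)}\in\{\pm1\}$ with $e_p^{(k)}=1$ if and only if $2^k\mid h(-8p)$; and prove a power saving
\[
\sum_{\substack{p\leq X\\ 2^{k-1}\mid h(-8p)}}e_p^{(k)}\ \ll_k\ X^{1-\delta_k}
\]
for some $\delta_k>0$. This yields $\rho(2^k)=\tfrac12\,\rho(2^{k-1})=2^{-k}$ and closes the induction.

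The first task is algebraic: to construct $e_p^{(k)}$. The governing object should be an ideal $\aaa_k$ of $\ZZP$ whose class is an iterated square root of the R\'edei ideal class, i.e.\ one extracts successive square roots to form a chain $[\aaa_3],[\aaa_4],[\aaa_5],\dots$ of classes in $\CL(-8p)$ with $[\aaa_{j+1}]^2=[\aaa_j]$. The condition $2^k\mid h(-8p)$ should then translate, via class field theory over the genus field, into a splitting condition for $p$ in a certain ramified extension of the Hilbert class field of $\QQP$, and $e_p^{(k)}$ should be the corresponding quadratic residue symbol evaluated at a normalized generator $\nu_k$ of a power of $\aaa_k$. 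Concretely, one expects $e_p^{(k)}$ to be a ``higher spin,'' a product $\prod_j[\,\Norm\sigma^{j}(\nu_k)/p\,]$ over Galois conjugates, whose arithmetic is controlled by reciprocity in a number field containing $\QQT$ and iterated quadratic radicals. Showing that such a $\nu_k$ exists, that its height and denominator are polynomially bounded in $p$, and that the resulting symbol depends on $p$ alone (not on auxiliary choices) is itself substantial; moreover, for $k\geq4$ one does not expect an abelian ``governing field'' to exist --- consistent with the $16$-rank being non-motivated --- so $e_p^{(k)}$ must genuinely depend on the class field theory of $\QQP$ itself.

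The second task is analytic: the power saving. Following Theorem~\ref{mainCor}, one would detect primes through a combinatorial identity of Vaughan or Heath-Brown type, reduce to bilinear sums $\sum_m\sum_n\alpha_m\beta_n\,e^{(k)}_{mn}$ over ideals of the governing number field, and use reciprocity to rewrite $e^{(k)}_{mn}$ as a character to a modulus built from $m$; the sum over $n$ then becomes an incomplete character sum --- or a sum of Kloosterman or Sali\'{e} type --- amenable to Weil-type bounds, Poisson summation, and the large sieve for quadratic characters. The exponent $\delta_k$ obtained this way will shrink as $k$ grows, because the governing field has degree roughly $2^{k-2}$ and each reciprocity step inflates the conductor and the number of bilinear variables; keeping $\delta_k>0$ for \emph{every} $k$, rather than merely for $k=4$ as in Theorem~\ref{mainCor}, is the crux of the problem.

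Indeed, this degradation --- compounded by the algebraic input --- is the main obstacle: even constructing a clean symbol $e_p^{(5)}$ is open, and the equidistribution of higher spins over number fields of unbounded degree lies beyond current technology, since the available short-range character-sum estimates lose too much and the Type~I/Type~II ranges in the Vinogradov decomposition eventually fail to overlap. A complete proof of Conjecture~\ref{mainConj} thus appears to demand either a uniform family of governing symbols of bounded complexity --- which the expected non-existence of governing fields renders unlikely in any naive form --- or a genuinely new analytic input capable of handling character sums over number fields of growing degree. Absent these, the inductive skeleton above establishes the conjecture only for $1\leq k\leq4$, and the case $k\geq5$ remains open.
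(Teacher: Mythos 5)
You have correctly recognized that this is a conjecture the paper does not prove: the paper itself only establishes the cases $k\leq 3$ (via Gauss, R\'{e}dei--Reichardt, Hasse, and the \v{C}ebotarev Density Theorem) and $k=4$ (via Theorem~\ref{mainCor}), exactly as you lay out, and leaves $k\geq 5$ open with support only from the Cohen--Lenstra heuristics and numerical evidence. Your honest conclusion that $k\geq 5$ remains out of reach --- and your identification of the obstacles, namely the expected non-existence of governing fields and the degradation of analytic estimates as the relevant number fields grow in degree --- is consistent with the paper's own discussion in the Generalizations subsection and in Section~\ref{Countingprimes}.
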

Conjecture~\ref{mainConj} for $k\leq 3$ was implicit in the work of R\'{e}dei and Reichardt mentioned above (although the case $k=3$ first appeared explicitly in literature as a result Hasse \cite{Hasse}). Moreover, Conjecture~\ref{mainConj} is supported by strong numerical evidence (for instance, the percentages of primes $p$ that are $<10^6$ such that $p\equiv -1\bmod 4$ and such that $\rk_{2^k}\CL(-8p) = 1$ for $k = 1$, $2$, $3$, $4$, $5$, and $6$ are $50.09$, $25.06$, $12.53$, $6.40$, $3.16$, and $1.62\%$, respectively). Theorem~\ref{mainCor} gives a positive answer to Conjecture~\ref{mainConj} for the case $k = 4$.
\subsection{Methods for proving density results about the $2$-part of the class group}
R\'{e}dei \cite{Redei} showed that the $4$-rank of $\CL(D)$ is essentially determined by the quadratic residue symbols $(\frac{p_1}{p_2})$ for distinct prime divisors $p_1$ and $p_2$ of $D$. Fouvry and Kl\"{u}ners \cite{FK2, FK1B, FK1} combined this characterization with analytic and combinatorial techniques to obtain many interesting results about the $4$-rank of $\CL(D)$ and the negative Pell equation $x^2-Dy^2=-1$.
\\\\
The $8$-rank is already more subtle. The main method to prove density results for the $8$-rank has been to construct certain governing fields and apply the \v{C}ebotarev Density Theorem.  More precisely, Stevenhagen \cite{Ste1} proved that if $d$ is a non-zero integer, then there exists a normal extension $M_d/\QQ$ such that the $8$-rank of $\CL(dp)$ (when $dp$ is a fundamental discriminant) is determined by the Artin conjugacy class $(p, M_d/\QQ)$ in $\Gal(M_d/\QQ)$. Knowing such a governing field $M_d$ explicitly makes it easy to study the density of primes $p$ for which $\rk_8\CL(dp) = r$ for any fixed integer~$r$. For instance, a governing field for the $8$-rank in the family $\{\CL(-8p)\}_{p\equiv -1(4)}$ is $\QQ(\zeta_{16})$, where $\zeta_{16}$ is a primitive $16$th root of unity.
\\\\
Cohn and Lagarias \cite{CohnLag} made the bold conjecture that governing fields $M_{d, 2^k}$ for the $2^k$-rank in the family $\{\CL(dp)\}_p$ as above should exist for every $2$-power $2^k$ (see also \cite{CohnLag2}). However, a governing field has \textit{not} been found for the $16$- or higher $2$-power ranks in \textit{any} family. This is the main reason that density results for the $16$-rank have been out of reach for such a long time (see \cite[p.\ 16-18]{Ste2}).
\\\\
Instead of exhibiting a governing field for the $16$-rank in the family $\{\CL(-8p)\}_p$, we introduce another method to the study of the $2$-part of class groups of quadratic number fields.

\subsection{Vinogradov's method}
In late 1940's, I.M.\@ Vinogradov \cite{Vino, Vino2} was able to prove cancellation in the sum over primes
$$
\sum_{p\leq X}\exp(2\pi i\sqrt{p})\log p
$$
by expanding it into sums of type I
$$
\sum_{n\leq X,\ n\equiv 0\bmod d} a_n
$$
and sums of type II
$$
\sumsum_{m\leq M, n\leq N}\alpha_m\beta_n a_{mn},
$$
where $a_n = \exp(2\pi i\sqrt{n})$, $d$ is any positive integer, and $\{\alpha_n\}_n$ and $\{\beta_m\}_m$ are very general sequences of complex numbers that do not grow too quickly. We prove Theorem~\ref{mainCor} by using a modern version of Vinogradov's method developed by Friedlander, Iwaniec, Mazur, and Rubin \cite[Proposition 5.2, p.722]{FIMR}.
\\\\
One important feature of Vinogradov's estimates is that the bound for the sum over primes saves a power of $X$, i.e., there exists a small real number $\delta>0$ such that 
$$
\sum_{p\leq X}a_p\log p\ll X^{1-\delta}.
$$
On the other hand, the best zero-free regions of classical $L$-functions generally give the much worse error estimates of the type $X\log(-c\sqrt{\log X})$. This suggests that the $a_p$ are not ``motivated'' -- they do not arise naturally as coefficients of a finite sum of classical $L$-functions, and in particular are not naturally related to an Artin symbol of $p$ in a fixed normal extension $M/\QQ$. In other words, the proof of Theorem~\ref{mainCor} gives strong evidence that a governing field for the $16$-rank in the family $\{\CL(-8p)\}_{p\equiv -1(4)}$ in fact does \textit{not} exist. For a more precise discussion of this phenomenon, see Section~\ref{Countingprimes}.
\subsection{A few words about the proof}
Leonard and Williams \cite{LW82} found the following criterion for the $16$-rank. Since we were unable to verify their proof of this criterion, we give another proof of a slightly more general statement in Section \ref{Governingsymbols}. A prime $p\equiv -1\bmod 16$ can be written as 
\begin{equation}\label{puv}
p = u^2-2v^2
\end{equation}
where $u$ and $v$ are integers, $u>0$, and 
\begin{equation}\label{u16}
u\equiv 1\bmod 16.
\end{equation}
Given such a representation, \cite[Theorem 3, p.205]{LW82} (or our Proposition~\ref{generalLW}) states that 
\begin{equation}\label{div16}
e_p = \left(\frac{v}{u}\right),
\end{equation}
where $e_p$ is defined as in Theorem~\ref{mainCor} and $\left(\frac{\cdot}{\cdot}\right)$ is the Jacobi symbol. The first few primes satisfying the above criterion are 127, 223, 479, 719, \ldots. Note that integers $u>0$ and $v$ satisfying \eqref{puv} and \eqref{u16} are \textit{not} unique. Nonetheless, the criterion \eqref{div16} is valid for \textit{any} choice of integers $u>0$ and $v$ satisfying \eqref{puv} and \eqref{u16}. Hence Theorem~\ref{mainCor} is a corollary of the following theorem, which we will prove using Vinogradov's method.
\begin{theorem}\label{mainThm}
For every $\epsilon>0$, there is a constant $C_{\epsilon}>0$ depending only on $\epsilon$ such that for every $X\geq 2$, we have
$$\left|\sum_{\substack{p\leq X\\ p\equiv -1\bmod 16}}\left(\frac{v}{u}\right)\right| \leq C_{\epsilon}X^{\frac{149}{150}+\epsilon},$$
where, for each prime $p$ in the sum above, $u$ and $v$ are taken to be integers satisfying \eqref{puv} and \eqref{u16}.
\end{theorem}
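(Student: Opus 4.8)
The plan is to prove Theorem~\ref{mainThm} by Vinogradov's method in the precise form of \cite[Proposition~5.2]{FIMR}. That proposition bounds a sum $\sum_{\pp}a_{\pp}\log N\pp$ over the prime ideals of a fixed number field, provided one can establish a ``Type~I'' estimate for the sums $\sum_{N\nn\le Y,\ \dd\mid\nn}a_{\nn}$ and a ``Type~II'' estimate for the bilinear sums $\sum_{\mm}\sum_{\nn}\alpha_{\mm}\beta_{\nn}a_{\mm\nn}$, each beating the trivial bound by a fixed power of the length in appropriate ranges. The proof therefore has three parts: an algebraic reduction that produces the coefficients $a_{\nn}$, the (easier) Type~I verification, and the (harder) Type~II verification.

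For the reduction I would work in $\OO:=\ZZT$, which has class number one and fundamental unit $\varepsilon:=1+\sqrt2$ of norm $-1$. A prime $p\equiv-1\bmod 16$ satisfies $p\equiv 7\bmod 8$, hence splits as $p\OO=\pp\opp$; since $N\varepsilon=-1$ the ideal $\pp$ has totally positive generators, and a short computation with the action of $\langle\varepsilon^2\rangle$ on the first coordinate modulo $16$ shows that among these there is a generator $u+v\sqrt2$, necessarily with $u>0$, satisfying $u\equiv 1\bmod 16$, unique up to multiplication by $\langle\varepsilon^8\rangle$. By Proposition~\ref{generalLW} the value $\left(\frac vu\right)$ attached to $\pp$ via any such generator equals $e_p$, and so is insensitive both to the $\langle\varepsilon^8\rangle$-ambiguity and to replacing $\pp$ by $\opp$; this lets me assemble a function $a_{\nn}$ with $|a_{\nn}|\le 1$, supported on ideals prime to $(\sqrt2)$ and with the condition $N\nn\equiv15\bmod16$ built in via Dirichlet characters modulo $16$, for which $a_{\pp}=\left(\frac vu\right)$ at precisely the primes appearing in Theorem~\ref{mainThm}. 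Because the ramified and degree-two prime ideals then contribute nothing, and after removing the weight $\log N\pp$ by partial summation, it suffices to prove
$$
\sum_{N\pp\le X}a_{\pp}\log N\pp\ \ll\ X^{\frac{149}{150}+\epsilon}.
$$
The single structural input that makes the bilinear step possible is a quasi-multiplicativity law $a_{\mm\nn}=a_{\mm}a_{\nn}\,R(\mm,\nn)$, obtained from quadratic reciprocity in $\OO$, in which $R$ is an explicit finite product of Jacobi symbols that, up to factors depending only on the residues of the generators of $\mm$ and $\nn$ modulo a fixed integer, is of the form $\left(\frac{\ell(\mu)}{\ell'(\nu)}\right)$ for linear forms $\ell,\ell'$ with bounded coefficients applied to those generators.

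The Type~I sums are comparatively soft. Fixing $\dd$ with a totally positive generator $\delta$ and summing $a_{\dd\mm}$ over $\mm$ with $N\mm\le X/N\dd$, the quasi-multiplicativity law turns the sum — up to bounded factors and a harmless splitting of $\mm$ into residue classes — into a sum of Jacobi symbols $\left(\frac{\ell_1(\mu)}{\ell_2(\mu)}\right)$, with $\ell_1,\ell_2$ linear in the two coordinates of the generator $\mu$ of $\mm$ and with coefficients built from $\delta$. Freezing one coordinate of $\mu$ and letting the other run over an interval, one eliminates that coordinate between $\ell_1$ and $\ell_2$, uses reciprocity to express the symbol as a fixed real Dirichlet character in $\ell_2(\mu)$ of conductor $\ll (N\dd)^{1/2}X^{1/2}$, and invokes the P\'olya--Vinogradov inequality; summing trivially over the frozen coordinate yields a bound of the shape $X^{3/4+\epsilon}(N\dd)^{-1/4}$, which together with the trivial bound $X/N\dd$ supplies exactly the Type~I input required by \cite[Proposition~5.2]{FIMR}.

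The Type~II estimate is the heart of the matter and the step I expect to be the main obstacle. Using the quasi-multiplicativity law, absorbing $a_{\mm}$ and $a_{\nn}$ into the coefficients, and splitting the generators of $\mm$ and $\nn$ into residue classes so that the residue-dependent part of $R$ becomes constant, one is reduced to bounding
$$
\sum_{\mm}\sum_{\nn}\alpha_{\mm}\beta_{\nn}\left(\frac{\ell(\mu)}{\ell'(\nu)}\right),
$$
with $|\alpha_{\mm}|,|\beta_{\nn}|\le1$, $N\mm\asymp M$, $N\nn\asymp N$, $MN\asymp X$, and $\mu,\nu$ the generators of $\mm,\nn$. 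Cauchy--Schwarz in $\mm$ opens this into $\sum_{\nn_1}\sum_{\nn_2}\beta_{\nn_1}\overline{\beta_{\nn_2}}\sum_{\mm}\left(\frac{\ell(\mu)}{\ell'(\nu_1)\ell'(\nu_2)}\right)$, where for each pair $(\nn_1,\nn_2)$ the inner sum over $\mm$ is a sum of the real character $\left(\frac{\,\cdot\,}{\ell'(\nu_1)\ell'(\nu_2)}\right)$ evaluated at $\ell(\mu)$ as $\mu$ runs over a two-dimensional box. When the modulus $\ell'(\nu_1)\ell'(\nu_2)$ is not a perfect square — in particular whenever $\nn_1\ne\nn_2$ — one more application of reciprocity turns this into a character sum amenable to the P\'olya--Vinogradov inequality, or, to retain the uniformity in $\nn_1,\nn_2$ that is actually needed, to Heath-Brown's large sieve inequality for quadratic characters, which produces genuine cancellation; the diagonal $\nn_1=\nn_2$ and the sparse set of degenerate pairs are estimated trivially and shown to be acceptable. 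The genuine difficulties are: keeping uniform control of the many auxiliary Jacobi symbols generated by the repeated reciprocity steps, together with the parity and sign conditions that force one to work with honest Jacobi rather than Kronecker symbols; arranging the fundamental domain for the unit action so that the coordinates of the generators of $\mm$ and $\nn$ truly range over boxes of the expected side lengths, so that the character sums run over genuine intervals; and bounding the sum over $\nn_1,\nn_2$ left after the character-sum estimate. Finally, optimizing the admissible range of $\dd$ in the Type~I step against the admissible ranges of $M$ and $N$ in the Type~II step and feeding the result into Vinogradov's combinatorial decomposition of the von Mangoldt function on ideals produces the power saving; the exponent $\frac{149}{150}$ is the outcome of this optimization and reflects the losses inherent in the single Cauchy--Schwarz step and the P\'olya--Vinogradov / large-sieve bound.
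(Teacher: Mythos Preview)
Your overall architecture matches the paper exactly: reduce to a sum over prime ideals of $\ZZT$ weighted by a spin symbol, define a well-posed $a_{\nn}$ using Proposition~\ref{generalLW2} (not Proposition~\ref{generalLW}, which gives the link to the $16$-rank but not the $\varepsilon^{8}$-invariance), and feed Type~I and Type~II estimates into \cite[Proposition~5.2]{FIMR}. Your Type~I sketch is fine, indeed slightly cleaner than the paper's: parametrising by $\mm=\nn/\dd$ rather than by $\nn$ on a sublattice yields the bound $X^{3/4+\epsilon}(N\dd)^{-1/4}$ you claim, which is stronger than the paper's $D^{1/2}X^{3/4+\epsilon}$.

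The genuine gap is in your Type~II step. First, the twist is not of the form $\left(\frac{\ell(\mu)}{\ell'(\nu)}\right)$ with both $\ell,\ell'$ linear: the paper identifies it as $\gamma(w,z)=\left(\frac{ac+2bd}{a^2-2b^2}\right)$, so the modulus is the \emph{norm} of $w$, and showing that $[wz]\sim[w][z]\gamma(w,z)$ (Proposition~\ref{almostMult}) is a substantial computation, not a direct consequence of reciprocity in $\OO$. More seriously, a single Cauchy--Schwarz followed by P\'olya--Vinogradov on the inner sum gives a bound of the shape $M^{3/4}N^{3/2}$ (and its symmetric counterpart), which beats $MN$ only when $N\ll M^{1/2}$ or $M\ll N^{1/2}$; the middle range $M^{1/2}\ll N\ll M^{2}$, in particular $M\asymp N$, is not covered. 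Heath-Brown's large sieve does not rescue this: the moduli $\ell'(\nu_1)\ell'(\nu_2)$ are values of a quadratic form on a two-dimensional lattice, not integers up to a bound, and the symbol is a quadratic residue symbol in $\ZZT$ rather than a Jacobi symbol indexed by rational integers. The paper closes this gap by two devices you omit: it evaluates the complete sum $\sum_{z\bmod W}\gamma(w_1,z)\gamma(w_2,z)$ exactly (Lemma~\ref{keycancellation}), and then applies H\"older with exponent $6$ in place of Cauchy--Schwarz (Lemma~\ref{generalQsum2}), exploiting the multiplicativity $\gamma(w,z_1)\gamma(w,z_2)=\gamma(w,z_1z_2)\mul(w)$ to replace $N$ by $N^{3}$ before the reciprocity-and-symmetrise step. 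This is what produces $\theta_2=1/12$ and hence the exponent $\tfrac{149}{150}$; with your single Cauchy--Schwarz no positive $\theta_2$ valid uniformly in $M,N$ emerges.
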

Theorem~\ref{mainThm} is an equidistribution result reminiscent of \cite[Theorem 2, p.\ 948]{FI1}. In~\cite{FI1}, Friedlander and Iwaniec associate a \textit{spin symbol} (i.e., a quantity taking values in $S^1\subset \CC^{\times}$) to each non-zero ideal in the Gaussian integers $\ZZ[i]$ and show, also using Vinogradov's method, that its value is equidistributed over prime ideals in $\ZZ[i]$ ordered by their norms.
\\\\
To apply Vinogradov's method to the sum $\sum_{p\leq X}e_p$, the most important task is to define a sequence $\{e_n\}_n$ in a way that one can prove good estimates for sums of type I and type II. Generalizing the proof in \cite{FI1} to our setting is made difficult by the fact that an odd ideal in the quadratic ring $\ZZT$ does not have a canonical generator -- the group of units $\ZZT^{\times}$ is infinite. We resort to averaging over four carefully chosen generators to define an analogous spin symbol. Proving that the resulting quantity is well-defined already requires significant new ideas. Proposition~\ref{generalLW2} in Section \ref{Governingsymbols} is a key result in this direction; it describes the twisting of $\left(\frac{v}{u}\right)$ by the fundamental unit $1+\sqrt{2}$. Section \ref{Governingsymbols} also contains the class field theoretic construction of the \textit{governing (spin) symbol} $e_p = \left(\frac{v}{u}\right)\chi(u)$ for the $16$-rank in the family $\{\CL(-8p)\}_{p\equiv -1(4)}$ (see Proposition~\ref{generalLW}). In Section \ref{Sumsoverprimes}, we construct spin symbols that both encode behavior of the $16$-rank in our family and are conducive to analytic techniques (see Equations \eqref{WGS} and \eqref{Wann}). We also reduce Theorem~\ref{mainThm} to a purely analytic statement (see Theorem~\ref{mainThm2}) that can be attacked by the machinery of Friedlander, Iwaniec, Mazur, and Rubin (see Proposition~\ref{sop}). The goal of Section~\ref{Fundamentaldomains} is to construct convenient fundamental domains for the multiplicative action of a fundamental unit $1+\sqrt{2}$ on $\ZZ[\sqrt{2}]$. In Section \ref{Linearsums}, we use a Polya-Vinogradov-type estimate to give bounds for sums of type I for the spin symbol. In Section \ref{Bilinearsums}, we give bounds for sums of type II of the spin symbol, thus completing the proof of Theorem~\ref{mainThm}. In the final section, we discuss the implications of the power-saving bound in Theorem~\ref{mainCor} on the existence of governing fields for the $16$-rank.
\subsection{Generalizations}
While it would be desirable to prove density results about the $16$-rank in any family of the type $\{\CL(dp)\}_p$ with $d$ fixed and $p$ varying, there are serious technical limitations on both algebraic and analytic sides of the problem. On the algebraic side, the $2$-part of $\CL(dp)$ might no longer be cyclic, and hence one would have to account for the possible interactions between the spin symbols arising from different prime divisors of $d$. On the analytic side, one would have to account for the possibility that the class group of $\QQ(\sqrt{d})$ need not be trivial.
\\\\
Perhaps an even more basic problem is that applying Vinogradov's method in this setting generally requires one to carry out analytic estimates over a number ring instead of $\ZZ$, and many such estimates require bounds on incomplete character sums that are well beyond anything currently available. For instance, similar proofs of density results about the $16$-rank in the families $\{\CL(-8p)\}_{p\equiv 1(4)}$ and $\{\CL(-4p)\}_{p}$ would require Burgess-type estimates for short character (modulo $q$) sums of length~$q^{\frac{1}{8}-\epsilon}$.
\\\\
Theorem~\ref{mainCor} thus lives on the very edge of unconditional density results about the $2$-part of class groups of quadratic number fields. So while the statement of our main theorem is not as general as one might hope for, our work nevertheless demonstrates two important ideas: that yet another classical analytic method is applicable to modern problems concerning class groups; and that the nature of the $16$-rank is of a type not seen before in the study of the $2$-part of class groups.   

\subsection*{Acknowledgments} 
I would like to give special thanks to my PhD advisors \a'{E}tienne Fouvry and Peter Stevenhagen for checking my work and helping me improve some arguments. I would also like to thank Hendrik Lenstra, Peter Sarnak, and Marco Streng for useful discussions. This research was partially supported by an ALGANT Erasmus Mundus Scholarship and by National Science Foundation agreement No. DMS-1128155.

\section{Governing symbols}\label{Governingsymbols}
The purpose of this section is to generalize \cite[Theorem 3, p.205]{LW82} and to develop a framework conducive to the analytic techniques of Friedlander, Iwaniec, Mazur, and Rubin \cite{FIMR}.
\\\\
Let $\chi$ be a character $(\ZZ/16\ZZ)^{\times}\rightarrow \CC^{\times}$ with kernel $\{\pm 1\}$. In other words, we have $\chi(\pm 1 \bmod 16) = 1$ and $\chi(\pm 7\bmod 16) = -1$. Then our generalization of \cite[Theorem 3, p.205]{LW82} is as follows: 
\begin{prop}\label{generalLW}
Let $p\equiv -1\bmod 16$ be a prime number. Let $u$ and $v$ be integers such that $p = u^2-2v^2$ and such that $u>0$ and $v\equiv 1\bmod 4$. Then
\begin{equation}
\rk_{16}\CL(-8p) = 1 \Longleftrightarrow \left(\frac{v}{u}\right)\chi(u) = 1.
\end{equation}
\end{prop}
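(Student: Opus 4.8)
Write $k=\QQ(\sqrt{-8p})$ and let $C$ be the $2$-Sylow subgroup of $\CL(-8p)$. Since $-8p=8\cdot(-p)$ with $p\equiv 3\bmod 4$, both $8$ and $-p$ are prime discriminants, so genus theory gives $\rk_2\CL(-8p)=1$; thus $C$ is cyclic, its genus subfield is $\QQ(\sqrt 2,\sqrt{-p})$, and the unique class of order $2$ is $[\pp_2]$, the class of the prime $\pp_2$ of $k$ above $2$. The hypothesis $p\equiv -1\bmod 16$ forces $8\mid h(-8p)$ by the theorems of R\'edei and Reichardt recalled above, so $|C|\geq 8$ and $k$ carries a unique unramified extension $H_8/k$ with $\Gal(H_8/k)\cong C/8C$ cyclic of order $8$. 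Because $C$ is cyclic of order $\geq 8$, its order-$2$ element $[\pp_2]$ lies in $8C$ if and only if $|C|\geq 16$, so
\[
  \rk_{16}\CL(-8p)=1 \;\Longleftrightarrow\; [\pp_2]\in 8C,
\]
and by Artin reciprocity the latter holds if and only if $\Frob_{\pp_2}$ is trivial in $\Gal(H_8/k)\cong C/8C$. Before computing this Frobenius I would dispose of the ambiguity in $(u,v)$: once $u>0$ and $v\equiv 1\bmod 4$ are imposed, the only remaining freedom is replacement of $\pi:=u+v\sqrt 2$ by $\ve^2\pi$ (with $\ve=1+\sqrt 2$), and Proposition~\ref{generalLW2} shows $\left(\frac{v}{u}\right)\chi(u)$ is invariant under this. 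Hence it suffices to verify the equivalence for one convenient admissible choice of $(u,v)$.

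Next I would build $H_8$ inside a tower over $F:=\QQT$. Here $\OO_F=\ZZT$ is a principal ideal domain with fundamental unit $\ve$ of norm $-1$, so $(\pi)$ is a prime ideal of $\ZZT$ of norm $p$, and since $\ve\ove=N(\ve)=-1$ we get $-p=(\ve\pi)(\ove\opi)$. Thus $\sqrt{-p}\in F(\sqrt{\ve\pi},\sqrt{\ove\opi})$, and a short computation with $\sigma\colon\sqrt 2\mapsto -\sqrt 2$ — which sends $\ve\pi$ to $\ove\opi$ and satisfies $\ve\pi\cdot\sigma(\ve\pi)=-p\in(\QQ(\sqrt 2,\sqrt{-p}))^{\times 2}$ — shows, via R\'edei's criterion for cyclic quartic extensions, that
\[
  H_4:=\QQ(\sqrt 2,\sqrt{-p},\sqrt{\ve\pi})
\]
is cyclic of degree $4$ over $k$. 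A ramification check at the primes above $p$, above $2$, and at $\infty$ — using $v\equiv 1\bmod 4$, $p\equiv -1\bmod 8$, and a congruence on $\ve\pi$ modulo a power of $2$ — shows $H_4/k$ is unramified, hence is the unramified $\ZZ/4$-extension of $k$. The divisibility $8\mid h(-8p)$ then says exactly that $\ve\pi$, viewed in the relevant ray class group of $F$, becomes a norm one level higher; unwinding this produces an explicit $\delta\in H_4$, assembled from $\pi$, $\ve$ and the square roots of $\ve\pi$ and $\ove\opi$, such that $H_8:=H_4(\sqrt\delta)$ is unramified and cyclic of degree $8$ over $k$. Pinning down $\delta$ and checking that $H_8/k$ is genuinely unramified and \emph{cyclic} at the prime $2$ is precisely where the full congruence $p\equiv -1\bmod 16$, equivalently the relevant constraints on $u$ and $v$ modulo $16$, is consumed.

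Finally I would compute $\Frob_{\pp_2}$. Since $4\mid h(-8p)$ already forces $\pp_2$ to split completely in $H_4/k$, picking a prime of $H_4$ above it identifies $\Frob_{\pp_2}$ with an element of $\Gal(H_8/H_4)=\{\pm 1\}$, which equals $1$ if and only if $\delta$ is a square modulo that prime, i.e.\ if and only if a quadratic residue symbol $\left(\frac{\delta}{\cdot}\right)$ is $+1$. Unwinding $\delta$ through Kummer theory, this symbol factors through quadratic residue symbols in $\ZZT$ of $\pi$, of $\ve$, and of their conjugates; applying quadratic reciprocity in $\ZZT$ turns the $\pi$-part into $\left(\frac{v}{u}\right)$, while the unit $\ve$ contributes the local factor at $\sqrt 2$, which — reciprocity at $\sqrt 2$ being governed by residues modulo powers of $2$ — is exactly $\chi(u)$. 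Reconciling the tower-adapted generator $\ve\pi$ with the normalized generator $\pi$ (the mismatch being responsible for the appearance of $\chi(u)$ at all) and invoking the invariance of Proposition~\ref{generalLW2} then yields $\Frob_{\pp_2}=1\Longleftrightarrow\left(\frac{v}{u}\right)\chi(u)=1$, which is the claim.

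The main obstacle is this last circle of ideas executed at the prime $2$: constructing $\delta$ so that $H_8/k$ is genuinely unramified and cyclic of degree $8$, and then tracking the $2$-adic and sign contributions through reciprocity precisely enough to see that they assemble into the specific character $\chi$ modulo $16$ rather than some other character. This delicate bookkeeping at $2$, together with the $\ve^2$-twist invariance of Proposition~\ref{generalLW2}, is exactly the content that a careless argument (such as the one in \cite{LW82} that we could not verify) is liable to get wrong.
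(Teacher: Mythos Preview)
Your strategy diverges from the paper's at the crucial step, and the divergence is exactly where your sketch stops being a proof. You propose to test whether the order-$2$ class $[\pp_2]$ lies in $8\CL$ by constructing $H_8$ explicitly and computing $\Frob_{\pp_2}$ there. The paper instead tests whether an order-$4$ class lies in $4\CL$, which only requires $H_4$. Concretely, the paper defines $\uu=(u,2v-\sqrt{-2p})\subset\ZZP$, shows via the identity $(2v-\sqrt{-2p})=\pt\uu^2$ that $[\uu]$ has order $4$, and then observes that $\rk_{16}\CL(-8p)=1$ iff $[\uu]\in 4\CL$ iff the Artin symbol of $\uu$ in $\Gal(H_4/K)$ is trivial. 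Since you have already (correctly) identified $H_4=K(\sqrt{2},\sqrt{\ve\pi})$, this sidesteps entirely the problem of producing your $\delta$ and $H_8$, which you yourself flag as the main obstacle and do not carry out.

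The second idea you are missing is how to evaluate an Artin symbol in a cyclic extension of degree $4$ (rather than $2$). The paper does not compute $\left(\frac{\uu}{H_4/K}\right)$ directly. Instead it introduces an auxiliary cyclic degree-$4$ extension $G(\sqrt{\gamma})/K$ with $\gamma=(2+\sqrt{2})v$, chosen so that $\ve\pi\equiv\gamma\bmod\uu$; this congruence forces every prime dividing $\uu$ to split in the common quadratic subfield $K(\sqrt{\beta})$ of the two cyclic-$4$ extensions, which yields a canonical identification of the two Artin symbols (Lemma~\ref{keylemma1}). The symbol in $G(\sqrt{\gamma})/K$ is then pushed down via Lemma~\ref{lemRed} to $\QQ(\sqrt{\gamma})/\QQ$, and since $\QQ(\sqrt{\gamma})\subset\QQ(\zeta_{16}\sqrt{v})$ one reads off $\left(\frac{v}{u}\right)\chi(u)$ directly. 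No reciprocity-at-$2$ bookkeeping of the sort you anticipate is needed; the character $\chi$ falls out of the cyclotomic description of $\QQ(\sqrt{\gamma})$.

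A smaller point: your appeal to Proposition~\ref{generalLW2} to dispose of the $(u,v)$-ambiguity is both unnecessary and not quite right. That proposition concerns invariance of $\left(\frac{v}{u}\right)$ under $\ve^8$, not invariance of $\left(\frac{v}{u}\right)\chi(u)$ under $\ve^2$. In the paper's argument the independence from the choice of $(u,v)$ is automatic, because the Artin symbol $\left(\frac{\uu}{H_4/K}\right)$ is intrinsic to $p$ and the computation shows it equals $\left(\frac{v}{u}\right)\chi(u)$ for \emph{any} admissible $(u,v)$.
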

The choice of $u$ and $v$ in the proposition above is \textit{not} unique. Let
$$\ve = 1+\sqrt{2}$$
be a \textit{fundamental unit} in $\ZZT$, so that the group of units $\ZZT^{\times}$ is generated by $\ve$ and $-1$. As the norm of $\ve$ is $-1$, the norm of $\ve^2 = 3+2\sqrt{2}$ is $1$. Let $p\equiv -1\bmod 16$ be a prime number as in Proposition~\ref{generalLW}. Given \textit{one} integer solution $(u, v) = (u_0, v_0)$ to the system
\begin{equation}\label{system}
\begin{cases}
p = u^2-2v^2 \\
u>0, v\equiv 1\bmod 4
\end{cases},
\end{equation}
then the complete set of integer solutions $(u, v)$ to the system \eqref{system} is of the form
$$u+v\sqrt{2} = \ve^{2k}(u_0+v_0\sqrt{2})$$
for some integer $k$. An interesting consequence of Proposition~\ref{generalLW} is that the quantity $\left(\frac{v}{u}\right)\chi(u)$ is \textit{independent} of the choice of $u$ and $v$ satisfying \eqref{system}. For a prime $p\equiv -1\bmod 16$, we can thus define the \textit{governing symbol} for the $16$-rank to be 
\begin{equation}\label{govp}
\left\langle p\right\rangle:= \left(\frac{v}{u}\right)\chi(u),
\end{equation}
where $u$ and $v$ are integers satisfying \eqref{system}. The quantity $\left\langle p \right\rangle$ determines the $16$-rank of the class group $\CL(-8p)$. It is interesting to note that the $16$-rank of $\CL(-8p)$ depends on a ``quantitative'' aspect of the splitting behavior of $p$ in $\ZZT$ that appears to allow no description purely in terms of the ``qualitative'' splitting behavior of $p$ in some normal extension of $\QQ$.
\\\\
Leonard and Williams claim that \cite[Theorem 3, p.205]{LW82} can be proved by numerous manipulations of Jacobi symbols and applications of quadratic reciprocity. We instead prove Proposition~\ref{generalLW} by interpreting the Jacobi symbol $\left(\frac{v}{u}\right)$ as an Artin symbol of an ideal $\uu$ defined via the decomposition $p = u^2-2v^2$ in an extension of $\QQP$ defined via the \textit{same} decomposition $p = u^2-2v^2$. Moreover, a by-product of our proof is the following proposition, which turns out to be essential for a successful application of the analytic tools we wish to use. 
\begin{prop}\label{generalLW2}
Let $u_1$ and $v_1$ be integers such that $u_1$ is odd and positive and such that $u_1^2-2v_1^2>0$. Define integers $u_2$ and $v_2$ by the equality
$$u_2+v_2\sqrt{2} = \ve^8(u_1+v_1\sqrt{2}).$$
Then
$$
\left(\frac{v_1}{u_1}\right) = \left(\frac{v_2}{u_2}\right).
$$
In other words, we have the equality of Jacobi symbols
$$
\left(\frac{v_1}{u_1}\right) = \left(\frac{408u_1+577v_1}{577u_1+816v_1}\right). 
$$
\end{prop}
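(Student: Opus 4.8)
The plan is to deduce Proposition~\ref{generalLW2} from the class field theoretic description of $\left(\frac{v}{u}\right)$ set up in the proof of Proposition~\ref{generalLW}, rather than from the Jacobi symbol gymnastics of \cite{LW82}. I would first dispose of the degenerate case: if $d=\gcd(u_1,v_1)>1$ then $d$ is odd (as $u_1$ is odd), and since the substitution $u_1+v_1\sqrt2\mapsto u_2+v_2\sqrt2=\ve^8(u_1+v_1\sqrt2)$ has determinant $\Norm(\ve^8)=1$ we get $d\mid\gcd(u_2,v_2)$, so both Jacobi symbols vanish; hence we may assume $\gcd(u_1,v_1)=1$. From $\ve^8=577+408\sqrt2$ one reads off $u_2=577u_1+816v_1$ and $v_2=408u_1+577v_1$; note that $u_2$ is odd, that $u_2>0$ (using $u_1>\sqrt2\,|v_1|$ together with $816<577\sqrt2$), and that $u_2\equiv u_1\pmod{16}$ since $577\equiv1$ and $816\equiv0\pmod{16}$. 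In particular $\chi(u_2)=\chi(u_1)$.

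The substantive input, to be furnished in the proof of Proposition~\ref{generalLW} and valid there for \emph{all} integers $u>0$ odd and $v$ with $u^2-2v^2>0$ (not only when $u^2-2v^2$ is prime), is an identification of $\left(\frac{v}{u}\right)$ with the Artin symbol of an ideal $\uu$ built from the factorization $u^2-2v^2=(u+v\sqrt2)(u-v\sqrt2)$ in an abelian extension likewise built from that factorization, possibly up to an explicit correction depending on $u+v\sqrt2$ only through the prime $2$ and the two real places. Granting this, Proposition~\ref{generalLW2} follows at once. Replacing $u_1+v_1\sqrt2$ by $\ve^8(u_1+v_1\sqrt2)$ leaves the norm $u^2-2v^2$ unchanged (as $\Norm(\ve^8)=1$) and leaves the ideal $(u+v\sqrt2)$ of $\ZZT$ unchanged (as $\ve^8$ is a unit), hence leaves $\uu$ and the extension unchanged; and since $\ve^8=577+408\sqrt2$ is totally positive (because $577-408\sqrt2=\ve^{-8}>0$) and satisfies $\ve^8\equiv1\pmod8$ in $\ZZT$ (because $577\equiv1$ and $408\equiv0\pmod8$) --- whereas $\ve^2$, $\ve^4$, $\ve^6\not\equiv1\pmod8$, which is why the eighth power, and not a lower one, appears --- it also leaves the local correction and, via $u_2\equiv u_1\pmod{16}$, the factor $\chi(u)$ unchanged. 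Therefore $\left(\frac{v_2}{u_2}\right)=\left(\frac{v_1}{u_1}\right)$, and substituting the values of $u_2,v_2$ gives the displayed equality of Jacobi symbols.

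The genuine work is thus entirely on the algebraic side, in Proposition~\ref{generalLW}: one must exhibit, functorially in the decomposition $u^2-2v^2$, an abelian extension and an ideal $\uu$ whose Artin symbol reproduces $\left(\frac{v}{u}\right)$ up to an explicitly computable, purely local correction, and one must make that correction transparent enough to confirm it is fixed by multiplication of $u+v\sqrt2$ by $\ve^8$; pinning down the behaviour at $2$ and at the two archimedean places is the delicate part, and once Proposition~\ref{generalLW}'s construction is in place, Proposition~\ref{generalLW2} is a corollary. A self-contained alternative would be to establish the one-step twisting law for $(u,v)\mapsto(3u+4v,\,2u+3v)$ directly by quadratic reciprocity and iterate it four times so that the corrections telescope; this is feasible in principle, but one must track the $2$-adic valuations and signs of all intermediate entries, which makes it error-prone --- it is along such lines that the argument of \cite{LW82} runs, and we were unable to verify it.
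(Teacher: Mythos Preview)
Your overall architecture matches the paper's: dispose of the degenerate case, then use the identification of $\left(\frac{v}{u}\right)\chi(u)$ with an Artin symbol in a cyclic degree-$4$ extension $L/K$ with $K=\QQ(\sqrt{-2n})$, and argue that multiplying $u+v\sqrt2$ by $\ve^8$ does not change the Artin symbol or $\chi(u)$. You are also right that the extension $L$ is unchanged (since $\ve^8$ is a square in $G$) and that $\chi(u_2)=\chi(u_1)$.

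The gap is in your sentence ``leaves the ideal $(u+v\sqrt2)$ of $\ZZT$ unchanged (as $\ve^8$ is a unit), hence leaves $\uu$ \ldots\ unchanged.'' The ideal $\uu$ does \emph{not} live in $\ZZT$; it is the ideal $(u,\,2vf-\tau)$ of the imaginary quadratic order $\ZZ[\tau]\subset K$, and it has norm $u$. Thus $\uu_1$ and $\uu_2$ are genuinely different ideals (of norms $u_1\neq u_2$), and the fact that $\ve^8$ is a unit of $\ZZT$ congruent to $1\bmod 8$ says nothing a priori about their relation in $K$. What is needed, and what the paper supplies as a separate lemma, is that $\uu_1$ and $\uu_2$ lie in the same class of the \emph{ring} class group of the order of conductor $4$ in $K$ (not the maximal order): one checks $(8v_k-\tau)=\pt_{4}\uu_k^2$ with the same $\pt_4$ for $k=1,2$, and then exhibits an explicit $\alpha=(17u_1+24v_1)+3\tau$ with $(\alpha/u_1)^2=(8v_2-\tau)/(8v_1-\tau)$, giving $u_1\uu_2=\alpha\uu_1$. (Equivalently, the matrix $\left(\begin{smallmatrix}17&96\\3&17\end{smallmatrix}\right)\in\SL$ carries the form $[u_1,16v_1,32u_1]$ to $[u_2,16v_2,32u_2]$.) The passage to conductor $4$ is not optional: by the conductor computation the extension $L/K$ can have conductor $(4)$, so $L$ is only guaranteed to sit inside the ring class field of $\ZZ[4\sqrt{-2n}]$, and it is in \emph{that} class group that one must compare $\uu_1$ and $\uu_2$. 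This is the step your sketch elides, and it is not a corollary of the construction for Proposition~\ref{generalLW} (which works at conductor $1$); it is additional content specific to Proposition~\ref{generalLW2}.
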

The rest of this section is devoted to proving Proposition~\ref{generalLW} and Proposition~\ref{generalLW2}.

\subsection{Preliminaries}
\subsubsection{Galois theory}
We will make extensive use of the following lemma from Galois theory (see \cite[Chapter VI, Exercise 4, p.321]{Lang}). 
\begin{lemma}\label{lemNor}
Let $F$ be a field of characteristic different from $2$, let $E = F(\sqrt{d})$, where $d\in F^{\times}\setminus (F^{\times})^2$, and let $L = E(\sqrt{x})$, where $x\in E^{\times}\setminus (E^{\times})^2$. Let $N = \Norm_{E/F}(x)$. Then we have three cases:
\begin{enumerate}
	\item If $N\notin (E^{\times})^2\cap F^{\times} = (F^{\times})^2\cup d\cdot (F^{\times})^2$, then $L/F$ has normal closure $L(\sqrt{N})$ and $\Gal(L(\sqrt{N})/F)$ is a dihedral group of order $8$.
	\item If $N\in (F^{\times})^2$, then $L/F$ is normal and $\Gal(L/F)$ is a Klein four-group.
	\item If $N\in d\cdot (F^{\times})^2$, then $L/F$ is normal and $\Gal(L/F)$ is a cyclic group of order $4$. 
\end{enumerate}
\end{lemma}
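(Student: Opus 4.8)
The plan is the standard one: let $\sigma$ generate $\Gal(E/F)$ and keep track of the $F$-conjugates of $\sqrt x$. Since $x\notin(E^\times)^2$ we have $[L:E]=2$ and $[L:F]=4$; the conjugates of $\sqrt x$ over $F$ are $\pm\sqrt x$ together with the images $\pm\sqrt{x^\sigma}$ of these under an extension of $\sigma$ to an algebraic closure, so the normal closure of $L/F$ is $\tilde L:=E(\sqrt x,\sqrt{x^\sigma})$ (which is separable over $F$, hence Galois over $F$). For a compatible choice of square roots the product $\sqrt x\cdot\sqrt{x^\sigma}$ is a square root of $xx^\sigma=N$, so $\tilde L=E(\sqrt x,\sqrt N)=L(\sqrt N)$. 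Everything then reduces to locating $\sqrt N$, which I control via the elementary identity $(E^\times)^2\cap F^\times=(F^\times)^2\cup d\,(F^\times)^2$ already recorded in the statement: if $r+s\sqrt d\in E$ squares into $F^\times$ then $2rs=0$ (here char $\neq 2$), forcing either $s=0$ (the element lies in $(F^\times)^2$) or $r=0$ (it lies in $d(F^\times)^2$).

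First I would treat the two normal cases. If $N\in(F^\times)^2$ then $\sqrt N\in F\subseteq L$, and if $N\in d(F^\times)^2$ then $\sqrt N\in F(\sqrt d)=E\subseteq L$; in either case $\tilde L=L$, so $L/F$ is Galois of degree $4$. To name the group, write $N=c^2$ (resp.\ $N=dc^2$) with $c\in F^\times$, observe $\sqrt{x^\sigma}=\pm\sqrt N/\sqrt x\in L$, and introduce $\tau\in\Gal(L/F)$ with $\tau|_E=\sigma$ and $\tau(\sqrt x)=\sqrt{x^\sigma}$, together with $\rho\in\Gal(L/F)$ with $\rho|_E=\mathrm{id}$ and $\rho(\sqrt x)=-\sqrt x$. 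Using $\tau(c)=c$ and $\tau(\sqrt d)=-\sqrt d$ one computes $\tau^2(\sqrt x)=\sqrt x$ in case (2) and $\tau^2(\sqrt x)=-\sqrt x$ in case (3); since $\tau^2|_E=\mathrm{id}$ this gives $\tau^2=\mathrm{id}$ in case (2), and $\tau^2=\rho\neq\mathrm{id}$ with $\tau^4=\mathrm{id}$ in case (3). In case (2), $\tau$ and $\rho$ are distinct involutions that one checks commute and generate $\Gal(L/F)$, so $\Gal(L/F)\cong(\ZZ/2\ZZ)^2$; in case (3), $\tau$ has order $4$, so $\Gal(L/F)\cong\ZZ/4\ZZ$.

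For case (1) the first task is to show $\sqrt N\notin L$. If it were, then writing $\sqrt N=a+b\sqrt x$ with $a,b\in E$ and squaring forces $ab=0$; the case $b=0$ gives $\sqrt N\in E$, excluded since $N\notin(F^\times)^2\cup d(F^\times)^2$, while the case $a=0$ gives $x\in N(E^\times)^2$, hence $x^\sigma\in N(E^\times)^2$ (as $N\in F^\times$), and multiplying, $N=xx^\sigma\in N^2(E^\times)^2$, i.e.\ $N\in(E^\times)^2$ — again excluded. Hence $[\tilde L:L]=2$, so $[\tilde L:F]=8$ and $L/F$ is not normal. Thus $\Gal(\tilde L/L)$ is an order-$2$ subgroup of $G:=\Gal(\tilde L/F)$ that is not normal; since every subgroup of an abelian group and of the quaternion group $Q_8$ is normal, $G$ must be the dihedral group of order $8$. (Equivalently: $G$ is nonabelian because $L/F$ is not normal, and $G$ contains the non-cyclic subgroup $\Gal(\tilde L/E)\cong(\ZZ/2\ZZ)^2$ — using that $x$, $N$, and $xN$ are all non-squares in $E$ — which already excludes $Q_8$.)

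This is a classical Galois-theoretic lemma and I do not anticipate a genuine obstacle; the step demanding the most care is the argument in case (1) that $\sqrt N\notin L$, together with the sign bookkeeping in the normal cases needed to check that $\tau$ and $\rho$ are well-defined $F$-automorphisms and to pin down the order of $\tau$, since the square roots $\sqrt x$, $\sqrt{x^\sigma}$, $\sqrt N$ must be chosen mutually compatibly.
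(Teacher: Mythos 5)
The paper does not give a proof of this lemma; it simply cites it to Lang's \emph{Algebra} (Chapter VI, Exercise 4). Your argument is the standard one that exercise expects and it is correct: the identification of the normal closure as $L(\sqrt N)$, the elementary computation of $(E^\times)^2\cap F^\times$, the sign bookkeeping showing $\tau^2=\mathrm{id}$ vs.\ $\tau^2=\rho$, and the exclusion of $Q_8$ via the Hamiltonian property (or via the Klein four-subgroup $\Gal(\tilde L/E)$) are all sound.
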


\subsubsection{The Artin map and Artin symbols}
Let $E/F$ be a finite abelian extension of number fields. Let $I_F$ denote the free abelian group generated by prime ideals of $F$ that are unramified in $E$. The \textit{Artin map} is the group homomorphism
$$\left(\frac{\cdot}{E/F}\right): I_F\rightarrow \Gal(E/F)$$
defined as follows. Let $\pp$ be a prime ideal of $F$ which is unramified in $E$ and let $\PP$ be any prime ideal of $E$ lying above $\pp$. Let $\Norm(\pp)$ be the cardinality of the residue field at $\pp$. Then the \textit{Artin symbol} 
$$\left(\frac{\pp}{E/F}\right)$$
is the unique element of $\Gal(E/F)$ such that
$$\left(\frac{\pp}{E/F}\right)(\alpha) \equiv \alpha^{\Norm(\pp)}\bmod \PP$$
for all $\alpha$ in $E$. We then extend $\left(\frac{\cdot}{E/F}\right)$ multiplicatively to $I_F$.
\\\\
We will use the following lemma several times.
\begin{lemma}\label{lemRed}
Let $E/F$ be an abelian extension of number fields, let $L/F$ be a finite extension, and let 
$$\iota:\Gal(EL/L)\hookrightarrow\Gal(E/F)$$
be the restriction-to-$E$ map. Then for every prime ideal $\pp$ of $L$ that is coprime to $\Disc(E/F)$, we have
$$
\iota\left(\frac{\pp}{EL/L}\right) = \left(\frac{\Norm_{L/F}(\pp)}{E/F}\right).
$$
\end{lemma}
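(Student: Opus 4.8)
The plan is to reduce the statement to a local computation at a single prime, where the compatibility of Frobenius elements under restriction is essentially formal. First I would fix a prime ideal $\pp$ of $L$ coprime to $\Disc(E/F)$, and let $\qq = \pp \cap F$ be the prime of $F$ below it. Since $\pp$ is unramified in $E$ (here I use that $\Disc(E/F)$ is coprime to $\pp$, together with the fact that $EL/L$ is unramified at $\pp$ because $E/F$ is unramified at $\qq$), the Artin symbols on both sides are well-defined. I would pick a prime $\PP$ of $EL$ lying above $\pp$, let $\PP_E = \PP \cap E$ and $\qq = \PP \cap F$, so that $\PP_E$ lies above $\qq$.

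The key step is to identify the residue field extensions. The residue field of $L$ at $\pp$ is a finite extension of the residue field of $F$ at $\qq$, say of degree $f = f(\pp/\qq)$; thus $\Norm(\pp) = \Norm(\qq)^f$, which by multiplicativity of the Artin map gives $\left(\frac{\Norm_{L/F}(\pp)}{E/F}\right) = \left(\frac{\qq}{E/F}\right)^f = \Frob_\qq^f$, where $\Frob_\qq$ denotes the Frobenius at $\qq$ in $\Gal(E/F)$. On the other side, $\left(\frac{\pp}{EL/L}\right)$ is the unique element $\sigma \in \Gal(EL/L)$ with $\sigma(\alpha) \equiv \alpha^{\Norm(\pp)} \bmod \PP$ for all $\alpha \in EL$. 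Restricting this congruence to $\alpha \in E$ and reducing modulo $\PP_E$ shows that $\iota(\sigma)$ acts as the $\Norm(\pp)$-th power map on the residue field of $E$ at $\PP_E$; since $\Norm(\pp) = \Norm(\qq)^f$ and the Frobenius at $\qq$ is the $\Norm(\qq)$-th power map, $\iota(\sigma) = \Frob_\qq^f$. Comparing the two computations gives the claimed equality.

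The main obstacle, such as it is, is bookkeeping rather than depth: one must be careful that the Artin symbol $\left(\frac{\pp}{EL/L}\right)$ genuinely lands in a subgroup of $\Gal(EL/L)$ on which $\iota$ is injective, i.e. that $E \cap L$ does not cause the restriction map to lose information in a way that spoils the identity — but this is automatic because $\iota$ is \emph{defined} as the restriction-to-$E$ map $\Gal(EL/L) \hookrightarrow \Gal(E/F)$, which is injective precisely because $EL$ is the compositum. A secondary point to handle cleanly is the passage from "$\sigma(\alpha) \equiv \alpha^{\Norm(\pp)}$ for all $\alpha \in EL$" to the same congruence for all $\alpha \in E$ modulo $\PP_E$: this is immediate since $\PP \cap E = \PP_E$ and $E \subset EL$. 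No serious analytic or algebraic difficulty arises; the lemma is a standard functoriality property of the Artin map, and the proof is a few lines of chasing Frobenius elements through the tower $F \subset E$, $F \subset L$, $L \subset EL$.
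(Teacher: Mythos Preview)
Your argument is correct and is precisely the standard proof of this functoriality property of the Artin map. The paper itself does not give a proof at all; it simply cites \cite[Proposition 3.1, p.\ 103]{Janusz}, where exactly the computation you outline (identifying $\Norm(\pp) = \Norm(\qq)^{f(\pp/\qq)}$ and comparing Frobenius elements via their action on residue fields) is carried out. So there is no meaningful difference in approach to compare: you have supplied the textbook argument that the paper chose to suppress behind a reference.
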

\begin{proof}
See \cite[Proposition 3.1, p. 103]{Janusz}.
\end{proof}

\subsubsection{$2^n$-Hilbert class fields}
Let $K = \QQP$ and $\CL = \CL(-8p)$. Recall that the \textit{Hilbert class field} $H$ of $K$ is the maximal unramified abelian extension of $K$. The Artin symbol induces a canonical isomorphism of groups
\begin{equation}\label{ArtSym}
\left(\frac{\cdot}{H/K}\right):\CL\longrightarrow \Gal(H/K). 
\end{equation}
Suppose for the moment that $\rk_{2^n}\CL(-8p) = 1$. Then $2^n\CL$ is a subgroup of $\CL$ of index $2^n$. We define the $2^n$-\textit{Hilbert class field} $H_{2^n}$ to be the subfield of $H$ fixed by the the image of $2^n\CL$ under the isomorphism \eqref{ArtSym}. Since the $2$-primary part of $\CL$ is cyclic, it follows immediately that $H_{2^n}$ is the unique unramified, cyclic, degree-$2^n$ extension of $K$. Moreover, \eqref{ArtSym} induces a canonical isomorphism of cyclic groups of order $2^n$
\begin{equation}\label{ArtSym2n}
\left(\frac{\cdot}{H_{2^n}/K}\right):\CL/2^n\CL\longrightarrow \Gal(H_{2^n}/K). 
\end{equation}
The main idea of the proof of Proposition~\ref{generalLW} is to write down explicitly, for $p\equiv -1\bmod 8$, \textit{both}
\begin{itemize}
	\item the $4$-Hilbert class field $H_4$ of $K$, \textit{and} 
	\item an ideal $\uu$ generating a class of order $4$ in $\CL(-8p)$
\end{itemize}
\textit{in terms of} integers $u$ and $v$ satisfying $p = u^2-2v^2$, and then to characterize those $p$ such that 
\begin{equation}\label{atriv}
\left(\frac{\uu}{H_{4}/K}\right) = 1.
\end{equation}
The isomorphism \eqref{ArtSym2n} for $n = 2$ and the equality \eqref{atriv} then imply that the class of order $4$ in $\CL$ in fact belongs to $4\CL$, which proves that $\CL$ has an element of order $16$. 

\subsubsection{Ring class fields}
To prove Proposition~\ref{generalLW2}, we will have to work with a generalization of the Hilbert class field. Let $D<0$ be any integer $\equiv 0, 1\bmod 4$ that is not a square, and let $\OO_D$ be the quadratic order of discriminant $D$, i.e., 
$$
\OO_D = \ZZ[(D+\sqrt{D})/2].
$$
Let $K = \QQ(\sqrt{D})$ be the field of fractions of $\OO_D$. Then $K$ is an imaginary quadratic number field of discriminant $\Disc(K)$ satisfying the equality
$$
D = f^2\Disc(K)
$$
for some positive integer $f$, called the \textit{conductor} of $\OO_D$. Let $\CL(D)$ denote the class group of $\OO_D$. Then there is a unique abelian extension $R_D/K$ called the \textit{ring class field} of $\OO_D$ such that the Artin map induces a canonical isomorphism of groups
\begin{equation}\label{RCF1}
\left(\frac{\cdot}{R_D/K}\right):\CL(D)\longrightarrow \Gal(R_D/K). 
\end{equation}
In the case $f = 1$, so that $D = \Disc(K)$, the ring class field $R_D$ coincides with the Hilbert class field of $K$.
\\\\
The main property of ring class fields of imaginary quadratic orders that we will use is stated in the following lemma.
\begin{lemma}\label{RCF2}
Let $K$ be an imaginary quadratic number field of even discriminant, and let $L/K$ be a cyclic extension such that:
\begin{itemize}
	\item $L/\QQ$ is a dihedral extension, and
	\item the conductor of $L/K$ divides $(4)$.
\end{itemize}
Then $L$ is contained in the ring class field $R_{D}$ of the imaginary quadratic order $\OO_D$ of discriminant $D = 16\cdot\Disc(K)$.
\end{lemma}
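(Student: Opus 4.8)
Here is how I would approach the proof of Lemma~\ref{RCF2}.

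The plan is to realize $L$ inside a ray class field of $K$ and then carve out $R_D$ explicitly. Write $d_K=\Disc(K)$, so that $D=16d_K=4^2d_K$ and $\OO_D$ is the order of conductor $4$. Using the standard description of $\CL(D)$ as the quotient of the ray class group of $K$ of modulus $4\OO_K$ (there is no archimedean component, since $K$ is imaginary) by the image of the subgroup $P$ of principal $\OO_K$-ideals admitting a generator congruent to a rational integer modulo $4\OO_K$, the isomorphism \eqref{RCF1} identifies $R_D$ with the subfield of the ray class field $K_{(4)}$ of $K$ of modulus $4\OO_K$ whose fixing subgroup in $\Gal(K_{(4)}/K)$ is the image of $P$ under the Artin map of $K_{(4)}/K$; in particular the conductor of $R_D/K$ divides $(4)$. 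Since the conductor of $L/K$ also divides $(4)$, we have $L\subseteq K_{(4)}$, and therefore $L\subseteq R_D$ if and only if $\left(\frac{\aaa}{L/K}\right)=1$ for all $\aaa\in P$. A short manipulation — given a generator $\alpha\equiv a\bmod 4\OO_K$ with $a\in\ZZ$ coprime to $4$, multiply by $b\in\ZZ$ with $ab\equiv 1\bmod 4$ — shows that, modulo ideals with a generator $\equiv 1\bmod 4\OO_K$, the group $P$ is generated by the ideals $\ell\OO_K$ with $\ell$ an odd prime. So the lemma reduces to the claim that $\left(\frac{\ell\OO_K}{L/K}\right)=1$ for every odd prime $\ell$.

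To prove this claim I would bring in the hypothesis that $L/\QQ$ is dihedral. Fix a complex conjugation $\sigma\in\Gal(L/\QQ)$; since $K\subseteq L$ is imaginary, $\sigma$ has order $2$ and restricts to the nontrivial automorphism of $K$, so $\sigma\notin A:=\Gal(L/K)$. The dihedral hypothesis says exactly that $\sigma$ acts on the abelian group $A$ by inversion, and hence that every element of $\Gal(L/\QQ)\setminus A$ has order $2$. Combined with the equivariance $\left(\frac{\sigma\pp}{L/K}\right)=\sigma\left(\frac{\pp}{L/K}\right)\sigma^{-1}$ of Artin symbols, this gives $\left(\frac{\sigma\pp}{L/K}\right)=\left(\frac{\pp}{L/K}\right)^{-1}$ for any prime $\pp$ of $\OO_K$ unramified in $L/K$ — and every prime above an odd $\ell$ is unramified in $L/K$, because the conductor of $L/K$ divides $(4)$. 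Then I would split into cases according to how $\ell$ decomposes in $K$: if $\ell$ splits, $\ell\OO_K=\pp\cdot\sigma(\pp)$ and the two Artin symbols are mutually inverse, so their product is $1$; if $\ell$ ramifies in $K$, then $\ell\OO_K=\pp^2$ with $\sigma(\pp)=\pp$, hence $\left(\frac{\pp}{L/K}\right)^2=1$, which is exactly $\left(\frac{\ell\OO_K}{L/K}\right)$.

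The case $\ell$ inert in $K$ is the one I expect to be the real obstacle: there $\ell\OO_K=\pp$ is a single prime ideal, so it is not enough to know that $\left(\frac{\pp}{L/K}\right)$ squares to $1$ — I need it to be trivial. Here I would use that for a prime $\qq$ of $L$ above $\ell$ one has $\left(\frac{\pp}{L/K}\right)=\Frob_{\qq/\pp}=\left(\Frob_{\qq/\ell}\right)^{2}$ (the residue degree of $\pp$ over $\ell$ equals $2$), together with the fact that $\Frob_{\qq/\ell}$ restricts nontrivially to $K$ precisely because $\ell$ is inert; hence $\Frob_{\qq/\ell}\in\Gal(L/\QQ)\setminus A$ has order $2$ by the dihedral structure, and $\left(\frac{\ell\OO_K}{L/K}\right)=\left(\Frob_{\qq/\ell}\right)^{2}=1$. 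This settles the claim and the lemma. Apart from the inert case, the points that need care are all in the first paragraph: pinning down the modulus and the (absent) archimedean part, the conductor conventions for ring and ray class fields, and the identification of the generators of $P$; once these are in place the case analysis above is short.
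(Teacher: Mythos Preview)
Your argument is correct. The paper itself does not actually prove this lemma: it simply cites Cox's \emph{Primes of the Form $x^2+ny^2$}, namely Theorem~9.18 (ring class fields of imaginary quadratic orders are generalized dihedral over $\QQ$) and Exercise~9.20 (the relevant converse). What you have written is essentially a self-contained solution to that exercise: you identify $R_D$ inside the ray class field $K_{(4)}$ via the congruence subgroup $P_{K,\ZZ}(4)$, reduce to showing that $\left(\frac{\ell\OO_K}{L/K}\right)=1$ for every odd prime $\ell$, and then exploit the dihedral hypothesis case by case according to the splitting type of $\ell$ in $K$. The inert case is indeed the heart of the matter, and your treatment---observing that $\Frob_{\qq/\ell}$ restricts nontrivially to $K$, hence lies in $\Gal(L/\QQ)\setminus\Gal(L/K)$ and therefore has order $2$ by the dihedral structure, so that $\left(\frac{\pp}{L/K}\right)=\Frob_{\qq/\pp}=(\Frob_{\qq/\ell})^2=1$---is the standard and correct argument. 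So your proof supplies precisely what the paper defers to Cox; there is nothing to compare beyond that.
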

\begin{proof}
See \cite[Theorem 9.18, p.\ 191]{Cox} and \cite[Exercise 9.20, p.\ 195-196]{Cox}.
\end{proof}

\subsection{A special family of quadratic fields}\label{SpecialFields}
Let $u$ and $v$ be coprime integers such that $u$ is odd and positive and such that 
\begin{equation}\label{nuv}
n = u^2-2v^2
\end{equation}
is positive as well. Let $K$ be the imaginary quadratic number field defined by 
$$
K = \QQ(\sqrt{-2n}).
$$
Note that $n \equiv \pm 1 \bmod 8$, and moreover $n\equiv 1\bmod 8$ if and only if $v$ is even. Let $m$ and $d$ be the unique positive integers such that $m$ is squarefree and such that $n = d^2m$. Then $K = \QQ(\sqrt{-2m})$ and the discriminant of $K/\QQ$ is equal to $-8m$. We emphasize that both $m$ and $d$ are odd. As $\gcd(u, v) = 1$, every prime dividing $n$ splits in $\QQT$. Hence there exist $\delta$ and $\mu$ in $\QQ(\sqrt{2})$ of norm $d$ and $m$, respectively, such that $u+v\sqrt{2} = \delta^2\mu$. 
\\\\
Let $G = K(\sqrt{2})$. Note that $G$ coincides with the \textit{genus field} of $K$ in the case that $n$ is a prime number congruent to $-1$ modulo $4$. Finally, we define a quadratic extension of $G$ as follows. Define $\nu\in\ZZT\subset G$ by setting
\begin{equation}\label{nu}
\nu = u+v\sqrt{2}.
\end{equation}
Then let $L = L_{u, v} = G(\sqrt{\ve\nu})$, where $\ve = 1+\sqrt{2}$ as before. If $n$ is a prime number congruent to $-1$ modulo $8$ and $u$ and $v$ are chosen as in the statement of Proposition~\ref{generalLW}, we will see that $L$ coincides with the $4$-Hilbert class field $H_4$ of $K$.
\begin{remark}
The fields $K$ and $G$ are determined simply by $n$. In other words, had we started with another choice of integers $u$ and $v$ giving rise to the same $n$, the definitions of $K$ and $G$ would not change. However, the field $L$ may depend on the specific choice of $u$ and $v$. Since we fixed $u$ and $v$ in the beginning of the section, this should not cause any confusion.
\end{remark}
We now introduce some notation and prove some properties of the extensions $K\subset G\subset L$. Let $\onu = u-v\sqrt{2}$ be the conjugate of $\nu$ in $\QQT$. We now state a few consequences of the assumption that $\gcd(u, v) = 1$. It will be useful to consider the following field diagram.
\begin{center} 
\begin{tikzpicture}
  \draw (0, 4.5) node[]{$L = G(\sqrt{\ve\nu})$};
  \draw (0, 3) node[]{$G = K(\sqrt{2})$};
	\draw (-3, 3) node[]{$A = \QQ(\sqrt{2}, \sqrt{\ve\nu})$};
  \draw (0, 1.5) node[]{$K = \QQ(\sqrt{-2m})$};
	\draw (-3, 1.5) node[]{$\QQ(\sqrt{2})$};
	\draw (0, 0) node[]{$\QQ$};
  \draw (0, 0.3) -- (0, 1.2);
	\draw (-0.3, 0.3) -- (-2.7, 1.2);
	\draw (0, 1.8) -- (0, 2.7);
  \draw (0, 3.3) -- (0, 4.2);
	\draw (-3, 1.8) -- (-3, 2.7);
	\draw (-2.7, 1.8) -- (-0.3, 2.7);
	\draw (-2.7, 3.3) -- (-0.3, 4.2);
\end{tikzpicture}
\end{center}

\begin{lemma}\label{P1}
The extension $L/K$ is cyclic of degree $4$, and the extension $L/\QQ$ is dihedral of order $8$. 
\end{lemma}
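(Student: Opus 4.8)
The plan is to apply Lemma~\ref{lemNor} with $F = \QQ(\sqrt{2})$, $E = G = \QQ(\sqrt{2},\sqrt{-2m}) = F(\sqrt{-2m})$, and $x = \ve\nu \in E^\times$, so that $L = E(\sqrt{x})$. The norm computation is the heart of the matter: $\Norm_{E/F}(\ve\nu) = \Norm_{G/\QQ(\sqrt{2})}(\ve\nu)$. Since $\ve\nu \in \ZZT \subset \QQ(\sqrt{2})$, the nontrivial automorphism of $G/\QQ(\sqrt{2})$ fixes $\ve\nu$, so this norm is simply $(\ve\nu)^2$, which is a square in $\QQ(\sqrt{2})^\times$. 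Therefore we are in case (2) or case (3) of Lemma~\ref{lemNor} — and we need to rule out case (2) to land in case (3), the cyclic-of-order-$4$ case. Concretely, case (3) holds precisely when $\ve\nu$ itself is \emph{not} a square in $G^\times$ but $(\ve\nu)^2$ is, i.e. when $L/F$ has degree $4$; case (2) would force $\sqrt{\ve\nu} \in G$ already (after adjusting by the square root of the norm), making $L = G$. So the first real step is: \textbf{show $\ve\nu \notin (G^\times)^2$}, equivalently $[L:G] = 2$.

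\textbf{First I would} verify $x = \ve\nu \notin (E^\times)^2$ so that $L = E(\sqrt{x})$ is genuinely a quadratic extension of $G$; this is needed even to invoke Lemma~\ref{lemNor}. To see this, suppose $\ve\nu = \gamma^2$ with $\gamma \in G^\times$. Taking norms down to $\QQ(\sqrt{2})$ gives $(\ve\nu)^2 = \Norm_{G/\QQ(\sqrt 2)}(\gamma)^2$, which is automatic and gives no contradiction; instead I would argue at the level of ideals or via ramification. The cleanest route: consider a prime $\pp$ of $\QQ(\sqrt 2)$ dividing $\nu$ (such primes exist since $n = u^2 - 2v^2 > 1$ and $\gcd(u,v)=1$ forces $n$ to have a prime factor, which splits in $\QQ(\sqrt 2)$ as every prime dividing $u^2 - 2v^2$ with $\gcd(u,v)=1$ does). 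Because $\gcd(u,v) = 1$, the element $\nu = u + v\sqrt 2$ is not divisible by the conjugate $\opp$, and $\ve$ is a unit, so $\ord_\pp(\ve\nu)$ is odd; hence $\pp$ ramifies in $E(\sqrt{\ve\nu})/E$ unless $\pp$ itself ramifies in $G/\QQ(\sqrt 2)$ — but $G/\QQ(\sqrt 2) = \QQ(\sqrt 2)(\sqrt{-2m})$ is ramified only at primes dividing $2m$, and choosing $\pp \nmid 2m$ (possible as long as $n$ has a prime factor coprime to $2m$, or more carefully using $d$ versus $m$) gives the needed ramification, forcing $[L:G] = 2$. I would package the divisibility bookkeeping using the factorization $u + v\sqrt 2 = \delta^2\mu$ from Section~\ref{SpecialFields}, where $\mu$ has squarefree norm $m$, so that the prime factorization of $\nu$ has a part of odd exponent coming from $\mu$.

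\textbf{Then} with $[L:G]=2$ established and $\Norm_{E/F}(x) = (\ve\nu)^2 \in (F^\times)^2$, Lemma~\ref{lemNor} leaves us in case (2) or case (3). Case (2) (Klein four) would require $\Gal(L/F)$ to be $\ZZ/2 \times \ZZ/2$, in which case every degree-$2$ subextension of $L/F$ is among the three quadratic subfields; in particular $L$ would then contain three quadratic extensions of $F = \QQ(\sqrt 2)$, namely $\QQ(\sqrt 2)(\sqrt{-2m})$ and two others. I would rule this out by noting that in case (2) the extension $L/F$ is \emph{unramified outside} the primes dividing the discriminant of $x$ over $F$ in a controlled way — more precisely, I would directly check that $\Gal(L/\QQ(\sqrt 2))$ is \emph{not} abelian by exhibiting the action of complex conjugation (an element of $\Gal(L/\QQ(\sqrt 2))$ since $\QQ(\sqrt 2)$ is real and $L$ is not) on $\sqrt{\ve\nu}$: complex conjugation fixes $\ve$, $\nu$, and $\sqrt 2$, hence sends $\sqrt{\ve\nu} \mapsto \pm\sqrt{\ve\nu}$, while the automorphism $\sigma$ generating $\Gal(G/\QQ(\sqrt 2))$ sends $\sqrt{-2m}\mapsto -\sqrt{-2m}$ and $\sqrt{\ve\nu}\mapsto \sqrt{\ve\onu}\cdot(\text{unit})$, and one computes $(\sqrt{\ve\nu})^\sigma = \pm\sqrt{\ve\nu}\cdot\sqrt{\Norm(\ve\nu)}/(\ve\nu) \cdot(\ve\nu)$... . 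Rather than this, the slick argument is: $L/\QQ$ is dihedral of order $8$ iff $\Norm_{E/F}(x)\notin (F^\times)^2$ (case 1) — but that's the opposite of our situation. So I would instead descend one step further: observe $\Norm_{G/K}(\ve\nu) = \Norm_{G/K}(\ve)\nu\onu$ up to the relevant Galois action...

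\textbf{The cleanest formulation, and the one I would ultimately use:} apply Lemma~\ref{lemNor} with the \emph{lower} base $F = K = \QQ(\sqrt{-2m})$, $E = G = K(\sqrt 2)$, $d = 2$, and $x = \ve\nu$. Then $N = \Norm_{G/K}(\ve\nu)$. Now $\Gal(G/K)$ is generated by $\sqrt 2 \mapsto -\sqrt 2$; this sends $\nu = u+v\sqrt 2 \mapsto \onu = u - v\sqrt 2$ and $\ve \mapsto \ove = 1 - \sqrt 2 = -\ve^{-1}$, so $N = \ve\ove\cdot\nu\onu = (-\ve^{-1}\ve)\cdot(u^2 - 2v^2) = -n = -d^2 m$. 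Thus $N = -n \in K^\times$. Since $K = \QQ(\sqrt{-2m})$, we have $d\cdot(K^\times)^2 = \{-2m\}\cdot(K^\times)^2 = \sqrt 2\cdot(\text{square})$... more to the point, $-n = -d^2 m$, and $-m = \frac{1}{2}\cdot(-2m) = \frac{1}{2}(\sqrt{-2m})^2 \in 2\cdot(K^\times)^2$, hence $-n = -d^2 m \in 2\cdot(K^\times)^2 = d\cdot (F^\times)^2$ in the notation of the lemma (with $d = 2$). Therefore we are precisely in \textbf{case (3)}: $L/K$ is normal and $\Gal(L/K)$ is cyclic of order $4$. To upgrade "normal over $K$" to "dihedral over $\QQ$", I then note that $L/\QQ$ is Galois of order $8$ (it is the compositum, over $\QQ$, of $\QQ(\sqrt 2)$, $K$, and the $\ZZ/4$-extension $L/K$, all Galois over $\QQ$ because $L = G(\sqrt{\ve\nu})$ with $\ve\nu\cdot\overline{\ve\nu} = \Norm_{\QQT/\QQ}(\ve\nu) = \ove\onu\cdot\ve\nu$... ) and that $\Gal(L/\QQ)$ is nonabelian: complex conjugation $c$ restricts to a nontrivial element of $\Gal(L/\QQ(\sqrt 2))$, while $\Gal(L/K)$ is cyclic of order $4$ not fixed pointwise by $c$ (since $c\notin \Gal(L/K)$, as $c$ acts nontrivially on $K$), and in a group of order $8$ with a cyclic subgroup of order $4$ and an element of order $2$ outside it, nonabelian forces dihedral; abelian would make $\Gal(L/\QQ) \cong \ZZ/4\times\ZZ/2$, which has a unique subgroup of order $2$ acting trivially on everything, contradicting that both $\Gal(L/G)$ and $c$ give order-$2$ elements restricting nontrivially to $\Gal(G/\QQ)$ in incompatible ways. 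I would tidy this last comparison by checking $c$ does not centralize $\Gal(L/K)$: the generator $\tau$ of $\Gal(L/K)$ satisfies $\tau(\sqrt{\ve\nu}) = $ (an expression involving $\sqrt{-2m}$, hence moved by $c$), giving $c\tau c^{-1} = \tau^{-1} \neq \tau$.

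\textbf{The main obstacle} I expect is the verification that $[L:G] = 2$, i.e. $\ve\nu \notin (G^\times)^2$ — without this, Lemma~\ref{lemNor} does not even apply and the whole structure collapses (one could have $L = G$). Everything else is bookkeeping with the norm $\Norm_{G/K}(\ve\nu) = -n$ and the classification of groups of order $8$, but showing $\ve\nu$ is a non-square in $G$ genuinely uses the hypothesis $\gcd(u,v) = 1$ through the ramification/valuation argument sketched above, and one must be slightly careful that the prime of $\QQ(\sqrt 2)$ witnessing odd valuation of $\nu$ is not ramified in $G/\QQ(\sqrt 2)$; the factorization $u + v\sqrt 2 = \delta^2\mu$ with $\Norm(\mu) = m$ squarefree is exactly the tool that makes this clean, since $\mu$ contributes primes of odd (namely $1$) valuation lying over the primes dividing $m$, but one then needs instead a prime over a factor of $d$ if $m$ itself could be fully ramified — so I would handle the case $d > 1$ and $d = 1$ (i.e. $n = m$ squarefree) separately, the latter being the case relevant to Proposition~\ref{generalLW} where $n = p$ is prime.
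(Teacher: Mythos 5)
Your final formulation of the first half is exactly the paper's argument: apply Lemma~\ref{lemNor} with $F=K$, $E=G$, $x=\ve\nu$, compute $\Norm_{G/K}(\ve\nu)=\ve\ove\,\nu\onu=-n$, and observe $-n=2\cdot\bigl(\tfrac12\sqrt{-2n}\bigr)^2\in 2\cdot(K^{\times})^2$, landing in case (3). For the dihedral statement the paper simply applies Lemma~\ref{lemNor} a second time, now with $F=\QQ$, $E=\QQ(\sqrt2)$, $A=\QQ(\sqrt2,\sqrt{\ve\nu})$: since $-n<0$ one has $-n\notin(\QQ^{\times})^2\cup 2(\QQ^{\times})^2$, so case (1) gives that $L=A(\sqrt{-n})$ is the normal closure of $A/\QQ$ with group $D_8$ — no group classification or complex-conjugation computation needed. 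Your alternative (order $8$, cyclic subgroup $\Gal(L/K)$ of order $4$, complex conjugation an order-$2$ element outside it, non-abelian forces $D_8$) is workable, but as written it is only a sketch: the normality of $L/\QQ$ and the relation $c\tau c^{-1}=\tau^{-1}$ are asserted rather than proved, whereas the paper's second use of the lemma delivers both for free. Finally, your ``main obstacle'' ($\ve\nu\notin(G^{\times})^2$) is a non-issue: if $\ve\nu$ were a square in $G$, its norm $-n$ would lie in $(K^{\times})^2$, which is disjoint from $2\cdot(K^{\times})^2$ because $2$ is not a square in the imaginary quadratic field $K$; so the entire ramification/valuation detour (including the case split on $d$ versus $m$) can be deleted.
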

\begin{proof}
We have
$$
\Norm_{G/K}(\ve\nu) = \Norm_{\QQ(\sqrt{2})/\QQ}(\ve\nu) = -\nu\onu = -n.
$$
As
$$
-n = 2\cdot \left(\frac{1}{2}\sqrt{-2n}\right)^2\in 2\cdot (K^{\times})^2,
$$
the first claim follows from Lemma~\ref{lemNor}, part (3). Now let $A = \QQ(\sqrt{2}, \sqrt{\ve\nu})$. As
$$
-n \notin (\QQ^{\times})^2\cup 2\cdot (\QQ^{\times})^2,
$$
part (1) of Lemma~\ref{lemNor} implies that $L = A(\sqrt{-n})$ is the normal closure of $A/\QQ$ and $\Gal(L/\QQ)\cong D_8$.
\end{proof}
Let $\pt$ denote the prime of $K$ lying above $2$.
\begin{lemma}\label{P2}
$L/K$ is unramified at every prime other than possibly at $\pt$.
\end{lemma}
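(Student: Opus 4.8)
The plan is to show that $L/K$ is unramified away from $\pt$ by exhibiting, for each prime $\qq$ of $K$ not dividing $2$, a generator of the degree-$2$ extension $L/G$ whose defining radicand is a unit at the prime(s) of $G$ above $\qq$. Since $L = G(\sqrt{\ve\nu})$ and $\ve = 1+\sqrt{2}$ is a unit in $\ZZ[\sqrt{2}]$, the only primes that can ramify in $L/G$ are those dividing $2$ or those dividing $\nu = u + v\sqrt{2}$ in $G$; and ramification in $L/K$ at a prime $\qq\nmid 2$ forces ramification in $L/G$ at a prime above $\qq$, because $G/K$ is unramified outside $2$ (its relative discriminant divides $(8)$). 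So it suffices to control the primes of $G$ dividing $\nu$.

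First I would use $\gcd(u,v)=1$. This implies $\gcd(\nu,\onu)$ divides $\gcd(2u, 2v\sqrt2) $, hence $\nu$ and $\onu$ are coprime away from the prime $\sqrt 2$ of $\ZZ[\sqrt2]$; in particular $\nu$ is squarefree away from $2$ in $\ZZ[\sqrt2]$, since $\nu\onu = n = u^2-2v^2$ and any prime of $\ZZ[\sqrt 2]$ dividing $\nu$ to order $\geq 2$ would, after applying conjugation and multiplying, force a square dividing $n$ that also divides... — more carefully, write $u+v\sqrt2 = \delta^2\mu$ with $\Norm(\delta)=d$, $\Norm(\mu)=m$, $m$ squarefree, as in Section~\ref{SpecialFields}. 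The key point is that $\delta$ is itself built from primes that split from $\QQ$, so each rational prime $\ell\mid n$ with $\ell\nmid 2$ contributes to $\nu$ a product of split primes, and I claim each such prime of $\ZZ[\sqrt2]$ divides $\nu$ to \emph{even} order exactly when $\ell\mid d$ and to odd order (namely $1$) when $\ell\mid m$. Then in passing to $G = K(\sqrt2)$, every prime $\qq$ of $G$ above such an $\ell$ is ramified over $\QQ$ (since $\ell\mid \Disc(K) = -8m$ when $\ell\mid m$) — wait, this needs care: $\ell\mid m$ ramifies in $K$, so $\qq\mid\ell$ has ramification index $2$ over $\QQ$, hence $\ord_{\qq}(\nu)$, which equals $e_{\qq/\ell}\cdot(\text{something})$... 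The cleanest route is: compute $\ord_{\qq}(\ve\nu)$ for each prime $\qq$ of $G$ with $\qq\nmid 2$ and show it is even, so that $\sqrt{\ve\nu}$ generates an extension of $G$ unramified at $\qq$.

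So the main steps are: (i) reduce to primes of $G$ dividing $\nu$; (ii) for a prime $\ell\mid m$ (so $\ell$ ramifies in $K$ and the prime $\pp_\ell$ of $\ZZ[\sqrt2]$ above $\ell$ divides $\mu$ exactly once), show the prime $\qq$ of $G$ above $\ell$ satisfies $\ord_\qq(\nu) = 2$ because $\ell$ ramifies in $G/\QQ(\sqrt2)$ while $\ord_{\pp_\ell}(\nu)=1$ — here one uses that $G = \QQ(\sqrt2,\sqrt{-2m})$ and $\qq$ lies over $\pp_\ell$ with $e(\qq/\pp_\ell)=2$; (iii) for a prime $\ell\mid d$, the prime $\pp_\ell$ of $\ZZ[\sqrt2]$ divides $\nu$ to even order $2\ord_{\pp_\ell}(\delta)$, and $\ell$ is unramified in $G/\QQ(\sqrt2)$, so $\ord_\qq(\nu)$ is still even; (iv) conclude $\ord_\qq(\ve\nu)$ is even for every $\qq\nmid 2$, so $L = G(\sqrt{\ve\nu})$ is unramified over $G$ at $\qq$, hence $L/K$ is unramified at every prime not above $2$. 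The main obstacle is step (ii)–(iii): correctly tracking how the factorization $\nu = \delta^2\mu$ in $\ZZ[\sqrt2]$ interacts with the ramification in the quadratic extension $G/\QQ(\sqrt2)$, i.e., checking that the parity of $\ord_\qq(\nu)$ comes out even in all cases. One must also separately verify there is no ramification at $\qq\nmid 2n$ (immediate, as $\ve\nu$ is a $\qq$-unit there) and handle the archimedean and the easy unramified-in-$G/K$ bookkeeping; these are routine. I expect the factorization-parity argument in (ii)–(iii), together with a clean statement of how primes of $\ZZ[\sqrt2]$ lie in $G$, to be where the real work lies.
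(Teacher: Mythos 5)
Your proposal is correct in its essential strategy, but it takes a genuinely different route from the paper. The paper's proof never computes valuations in $G$: it writes $L = \QQ(\sqrt{-2m},\sqrt{2},\sqrt{\ve\mu})$ using $\nu=\delta^2\mu$, notes that for an odd $p\mid m$ splitting as $\pi\opi$ in $\ZZT$ the coprimality of $\nu$ and $\onu$ (from $\gcd(u,v)=1$) forces one of the two primes, say $\pi\mid\onu$, to be unramified in $A=\QQ(\sqrt2,\sqrt{\ve\nu})$; since $L/\QQ$ is normal of degree $8$ with $[L:A]=2$, the common ramification index of $p$ in $L/\QQ$ is at most $2$, and as $p$ already ramifies in $K/\QQ$, multiplicativity of ramification indices in the tower $\QQ\subset K\subset L$ forces $L/K$ to be unramified above $p$. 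You instead argue ``from above,'' showing $\ord_{\qq}(\ve\nu)$ is even at every odd prime $\qq$ of $G$ so that the Kummer extension $L/G$ is unramified there; this works, and the crux you identify is real: for $\ell\mid m$ the ramification index $2$ of $\qq$ over $\ZZT$ doubles the valuation (making it even whatever $\ord_{\pp_\ell}(\nu)$ is), while for $\ell\nmid m$ one has $v_\ell(n)=2v_\ell(d)$ even already. The paper's argument buys you freedom from any case analysis and from the parity bookkeeping in $G$, at the cost of invoking the normal-closure structure from Lemma~\ref{P1}; yours is more local and self-contained but needs the tame Kummer criterion and a careful split of cases. One point to tighten: your dichotomy ``$\ell\mid d$'' versus ``$\ell\mid m$'' is not exclusive, and in the overlap (e.g.\ $n=\ell^3$) the valuation $\ord_{\pp_\ell}(\nu)=2\ord_{\pp_\ell}(\delta)+1$ is odd and not $1$, so the specific formulas in your steps (ii) and (iii) are each wrong there; the correct split is ``$\ell\mid m$'' (where ramification in $G/\QQT$ rescues evenness regardless) versus ``$\ell\nmid m$'' (where the valuation is already even), after which your conclusion (iv) goes through.
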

\begin{proof}
Recall that $\nu = \delta^2\mu$, so $L = \QQ(\sqrt{-2m}, \sqrt{2}, \sqrt{\ve\mu})$. As the norm of $\mu$ is $m$, every prime that ramifies in $L/\QQ$ must divide $2m$. Let $p$ be a rational prime dividing $m$. Suppose $p$ factors as $\pi\opi$ in $\ZZT$, and, without loss of generality, suppose $\pi$ divides $\onu$. As $u$ and $v$ are coprime, $\nu$ and $\onu$ are coprime in $\ZZT$ and hence $\pi$ does not ramify in $A = \QQ(\sqrt{2}, \sqrt{\ve\nu})$. Thus, as $p$ splits in $\QQT$, its ramification index in $L/\QQ$ is at most $2$. But $p$ already ramifies in $K/\QQ$, and hence every prime $\pp$ of $K$ lying above $p$ must be unramified in $L/K$.
\end{proof}
By Lemma~\ref{P2}, the only prime that can divide the conductor $\ff$ of $L/K$ is the prime~$\pt$. The following lemma gives the precise power of $\pt$ dividing $\ff$. 
\begin{lemma}\label{P3}
Let $\ff$ denote the conductor of $L/K$. Then:
\begin{enumerate}
	\item If $v\equiv 1\bmod 4$, then $L/K$ is unramified and $\ff = 1$.
	\item If $v\equiv -1\bmod 4$, then $\ff = \pt^2 = (2)$.
	\item If $v\equiv 0 \bmod 2$, then $\ff = \pt^4 = (4)$.
\end{enumerate}
\end{lemma}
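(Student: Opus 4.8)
The plan is to pass to completions and work entirely at the prime $\pt$, which by Lemma~\ref{P2} is the only prime of $K$ that can divide $\ff$. Write $K_\pt$ for the completion, a ramified quadratic extension of $\QQ_2$; let $\qq \mid \pt$ be a prime of $G$ and $\PP \mid \qq$ a prime of $L$; and recall from Lemma~\ref{P1} that $\Gal(L/K)$, hence the decomposition group $\Gal(L_\PP/K_\pt)$, is a quotient of $C_4$. If this local group is trivial, or of order $2$ with trivial inertia, then $L/K$ is unramified at $\pt$ and $\ord_\pt\ff = 0$; otherwise $L_\PP/K_\pt$ is totally and wildly ramified, and $\ord_\pt\ff$ is the conductor exponent of a faithful character of $\Gal(L_\PP/K_\pt)$. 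So the lemma reduces to a purely local computation: since $L = G(\sqrt{\ve\nu})$, one has to locate the $2$-adic square class of $\ve\nu$ precisely enough to pin this exponent down.

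I would organize the computation around the parity of $v$. From $\nu\onu = n = u^2-2v^2$ one gets $n \equiv -1 \bmod 8$ when $v$ is odd and $n \equiv 1 \bmod 8$ when $v$ is even (already noted after \eqref{nuv}); correspondingly $K_\pt = \QQ_2(\sqrt{-2n})$ is $\QQ_2(\sqrt 2)$ in the first case --- so $\sqrt 2 \in K_\pt$, the prime $\pt$ splits in $G$, and $G_\qq = K_\pt = \QQ_2(\sqrt 2)$ --- and is $\QQ_2(\sqrt{-2})$ in the second case, where $\sqrt 2 \notin K_\pt$ and $G_\qq = K_\pt(\sqrt 2) = \QQ_2(\zeta_8)$ is totally ramified of degree $4$ over $\QQ_2$. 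In every case $\ve\nu = (u+2v) + (u+v)\sqrt 2$ has odd rational part, hence is a principal $2$-adic unit, and $L_\PP = G_\qq(\sqrt{\ve\nu})$.

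The core step is to compute the square class of $\ve\nu$ in $K_\pt^\times$ (resp.\ $G_\qq^\times$), which I would do by tracking $\ve\nu - 1 = (u+2v-1) + (u+v)\sqrt 2$ in $\ZZ[\sqrt 2]$ against the powers of $\sqrt 2$: when $v$ is odd, $u+v$ is even and the $\sqrt 2$-valuation of $\ve\nu-1$ is governed by $u+2v-1$, and one finds that modulo squares in $\QQ_2(\sqrt 2)$ the element $\ve\nu$ is trivial or in the unramified class precisely when $v \equiv 1 \bmod 4$ (so $\ord_\pt\ff = 0$), while for $v \equiv -1 \bmod 4$ its class has conductor exponent $2$; when $v$ is even, $u+v$ is odd, so $\ve\nu \equiv 1 + \sqrt 2 \bmod 2$, its square class in $\QQ_2(\zeta_8)$ turns out to have conductor exponent $6$, and transporting this across the quadratic step $G_\qq/K_\pt$ --- via the conductor--discriminant relation $\ff^2 = \Norm_{G/K}(\Disc(L/G))\cdot\ff(G/K)$ together with the (separately computed) value $\Disc(G/K) = \ff(G/K) = \pt^2$, or equivalently via Lemma~\ref{lemRed} --- gives $\ord_\pt\ff = 4$. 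Along the way Lemma~\ref{lemNor}, applied over the appropriate base with $\Norm_{G/K}(\ve\nu) = -n \in 2(K^\times)^2$, certifies that the Galois type is the cyclic-quartic one, and the conductor exponents of the individual quadratic steps are extracted from the unit filtration $U^{(k)} = 1 + \mathfrak m^k$ of the (small) $2$-adic fields $\QQ_2(\sqrt 2)$, $\QQ_2(\sqrt{-2})$ and $\QQ_2(\zeta_8)$.

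The hard part is precisely this wild $2$-adic bookkeeping. The square-class computations must be pushed modulo $(\sqrt 2)^k$ for $k$ of size roughly $2e+1$ (so $k$ up to about $5$ for $\QQ_2(\sqrt 2)$ and about $9$ for $\QQ_2(\zeta_8)$, $e$ being the absolute ramification index), and one must reckon with the fact that at these fields the subgroups $U^{(m)}(E^\times)^2$ do not strictly decrease at every step --- so an element lying in $U^{(m)}\setminus U^{(m+1)}$ may have conductor exponent strictly smaller than the naive $2e+1-m$, which is exactly what makes the cases $v \equiv 1$ and $v \equiv -1 \bmod 4$ both nontrivial to analyze. One must also check the (a priori non-obvious) point that the square class of $\ve\nu$, and hence $\ord_\pt\ff$, depends only on $v \bmod 4$ and the parity of $v$, even though $\ve\nu$, the field $K$, and the relevant $2$-adic modulus all vary with $u$ and $m$; isolating this clean invariant, and handling the degree-$4$ totally ramified field $\QQ_2(\zeta_8)$ that appears when $v$ is even, is where essentially all the effort goes.
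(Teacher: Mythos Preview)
Your plan is correct and follows essentially the same route as the paper: localize at $\pt$, split according to the parity of $v$ (so that $K_\pt$ is $\QQ_2(\sqrt 2)$ or $\QQ_2(\sqrt{-2})$ and $G_\qq$ is $K_\pt$ or $\QQ_2(\zeta_8)$ accordingly), determine the square class of the Kummer generator $\ve\nu$ in the relevant $2$-adic field, and in the even-$v$ case combine the tower/conductor--discriminant formula $\ff(L/K)^2 = \Disc(G/K)\cdot\Norm_{G/K}(\Disc(L/G))$ with $\Disc(G/K)=\pt^2$ and $\Disc(L_\PP/G_\qq)=\ps^6$ to read off $\ff=\pt^4$. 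The only cosmetic difference is that the paper extracts the local discriminants by writing down explicit Eisenstein-shifted minimal polynomials (e.g.\ $X^2+\sqrt 2\,X+(1-\ve\nu)/2$ over $\QQ_2(\sqrt 2)$, and the analogue with uniformizer $s=1+\zeta_8$ over $\QQ_2(\zeta_8)$) rather than phrasing things in terms of the unit filtration $U^{(k)}$; the two computations are equivalent and yield the same exponents.
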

\begin{proof}
Since $\pt$ is the only prime that can divide $\ff$, we only need to study the extensions locally at the primes above $2$. Let $\TT$ be a prime of $G$ lying above $\pt$ and $\TTT$ a prime of $L$ lying above $\TT$. Let $\Kt$, $\Gt$, and $\Lt$ denote the completions of $K$, $G$, and $L$ with respect to the primes $\pt$, $\TT$, and $\TTT$, respectively.
\\\\
If $v$ is odd, then $n\equiv -1\bmod 8$, and so $\Kt = \QQ_2(\sqrt{-2n}) = \QQ_2(\sqrt{2})$ and $\Gt = \Kt(\sqrt{2}) = \Kt$. Thus the extension $\Gt/\Kt$ is trivial and $\Lt = \QQ_2(\sqrt{2}, \sqrt{\ve\nu})$. The extension $\QQ_2(\sqrt{2}, \sqrt{\ve\nu})/\QQ_2(\sqrt{2})$ is unramified if and only if $\ve\nu$ is a square modulo $\pt^4$; here $\pt = (\sqrt{2})$ is the maximal ideal in $\ZZ_2[\sqrt{2}]$. If $v\equiv 1\bmod 4$, then
$$
\ve\nu = (u+2v) + (u+v)\sqrt{2} \equiv 
\begin{cases} 
      1 \bmod \pt^4& \text{ if }u\equiv -1\bmod 4, \\
      \ve^2 \bmod \pt^4& \text{ if }u\equiv 1\bmod 4,
\end{cases}
$$
and hence $\Lt/\Kt$ is unramified. This proves part (1) of the lemma. Similarly, if $v\equiv 1\bmod 4$, then 
$$
\ve\nu\equiv 3\text{ or }1+2\sqrt{2}\bmod \pt^4.
$$
In this case $\ve\nu$ is not a square modulo $\pt^4$, and so $\Lt/\Kt$ is ramified. The ramification is wild, and thus $\ff$ must be divisible by $\pt^2$. As $\ve\nu\equiv 1\bmod \pt^2$, the extension $\Lt/\Kt$ can be generated by a root of the polynomial
$$
X^2+\sqrt{2}X+\frac{1-\ve\nu}{2} = \frac{1}{2}\left(\left(\sqrt{2}X+1\right)^2-\ve\nu\right),
$$
whose discriminant is $2 \bmod \pt^4$. Hence $\ff = \pt^2$ and part (2) of the lemma is proved.
\\\\
Finally, suppose $v\equiv 0\bmod 2$, so that $n\equiv 1\bmod 8$. Then $\Kt = \QQ_2(\sqrt{-2n}) = \QQ_2(\sqrt{-2})$ and $\Gt = \Kt(\sqrt{2}) = \QQ_2(\zeta_8)$. The quadratic extension $\Gt/\Kt$ is ramified of conductor $\pt^2$, where $\pt = (\sqrt{-2})$ is the maximal ideal in $\ZZ_2[\sqrt{-2}]$. Let $s = 1+\zeta_8$ be a generator of the maximal ideal $\ps$ in $\ZZ_2[\zeta_8]$. Note that $s^2 = \sqrt{2}\cdot \zeta_8\ve$, so $\ve\nu\equiv 1\bmod \ps^2$. Hence the extension $\Lt/\Kt$ can be generated by a root of the polynomial 
$$
X^2+s^3\zeta_8^6\ve^{-2}X +\frac{1-\ve\nu}{s^2} = \frac{1}{s^2}\left(\left(s X+1\right)^2-\ve\nu\right),
$$
whose discriminant is $s^6 \bmod \ps^7$. Hence the discriminant of $\Lt/\Gt$ is $\ps^6$.
\\\\
To finish, we use the conductor-discriminant formula, i.e.,
$$
\Disc(\Lt/\Kt) = \Disc(\Gt/\Kt)\ff(\Lt/\Kt)^2.
$$
The discriminant formula for the tower of fields $\Kt\subset \Gt\subset \Lt$ gives
$$
\Disc(\Lt/\Kt) = \Disc(\Gt/\Kt)^2\Norm_{\Gt/\Kt}(\Disc(\Lt/\Gt)),
$$
so that
$$
\ff(\Lt/\Kt)^2 = \Disc(\Gt/\Kt)\Norm_{\Gt/\Kt}(\Disc(\Lt/\Gt)).
$$
Substituting $\Disc(\Gt/\Kt) = \pt^2$ and $\Disc(\Lt/\Gt) = \ps^6$ into the formula above implies that $\ff(\Lt/\Kt) = \pt^4$, which completes the proof of part (3) of the lemma.
\end{proof}

\begin{lemma}\label{P4}
$L$ is contained in the ring class field $R_{D}$ of the imaginary quadratic order $\OO_D$ of discriminant $D = 16\cdot -8m$.
\end{lemma}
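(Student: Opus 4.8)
The plan is to deduce this lemma directly from Lemma~\ref{RCF2}, since that result was set up to produce exactly the conclusion $D = 16\cdot\Disc(K)$, and all of its hypotheses have (essentially) already been established. So the work is just to check the hypotheses and match the discriminants. First I would observe that $K = \QQ(\sqrt{-2m})$ is imaginary quadratic with even discriminant: as $m$ is odd and squarefree, $-2m$ is a squarefree integer congruent to $2\bmod 4$, so $\Disc(K) = 4\cdot(-2m) = -8m$, which is even. Next, Lemma~\ref{P1} gives precisely the remaining two structural hypotheses of Lemma~\ref{RCF2}: $L/K$ is cyclic (of degree $4$) and $L/\QQ$ is dihedral (of order $8$).

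It then remains to verify that the conductor $\ff$ of $L/K$ divides $(4)$. This is immediate from Lemma~\ref{P3}: in the three cases $v\equiv 1\bmod 4$, $v\equiv -1\bmod 4$, and $v$ even, the lemma computes $\ff = (1)$, $\ff = \pt^2 = (2)$, and $\ff = \pt^4 = (4)$ respectively, and each of these divides $(4)$ in $\OO_K$. Hence all the hypotheses of Lemma~\ref{RCF2} are satisfied.

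Applying Lemma~\ref{RCF2}, we conclude that $L$ is contained in the ring class field $R_D$ of the imaginary quadratic order $\OO_D$ of discriminant $D = 16\cdot\Disc(K) = 16\cdot(-8m)$, which is exactly the assertion. I do not expect any genuine obstacle here: the substantive input — the cyclic/dihedral structure (Lemma~\ref{P1}), the bound on ramification away from $\pt$ (Lemma~\ref{P2}), and the precise conductor computation at $\pt$ (Lemma~\ref{P3}) — has already been carried out, so the only thing to be careful about is the bookkeeping of $\Disc(K) = -8m$ so that the value of $D$ matches the statement.
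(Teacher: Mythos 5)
Your proposal is correct and follows exactly the paper's own argument, which simply combines Lemmas~\ref{RCF2}, \ref{P1}, and \ref{P3}; you have merely spelled out the verification of the hypotheses (even discriminant, cyclic/dihedral structure, conductor dividing $(4)$) that the paper leaves implicit.
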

\begin{proof}
Combine Lemmas \ref{RCF2}, \ref{P1}, and \ref{P3}. 
\end{proof}

\subsection{A computation of Artin symbols}\label{ArtinComp}
This section contains the heart of the proof of both Proposition~\ref{generalLW} and Proposition~\ref{generalLW2}.
\\\\
The integers $u$ and $v$ appearing in \eqref{nuv} are not unique. Given a representation $n = u^2-2v^2$, another representation can be obtained by multiplying $u+v\sqrt{2}$ by $3+2\sqrt{2}$. This transforms $(u, v)$ into $(3u +4v, 2u + 3v)$.
\\\\
We will show how the quantity $\left(\frac{v}{u}\right)\chi(u)$, where $\chi$ is a Dirichlet character from Proposition~\ref{generalLW2}, naturally arises in the computation of a certain Artin symbol. This computation is somewhat delicate because the Artin symbol will take a value in a cyclic group of order $4$, and such a group has a non-trivial automorphism.
\begin{remark}
In \cite{HKW}, Halter-Koch, Kaplan, and Williams compute Artin symbols in similar cyclic field extensions $L/K$ of degree $4$. Their results, however, involve computations of Artin symbols of ideals of $K$ of order $2$ in the class group of $K$, and hence only give information about the $8$-rank in certain quadratic fields.
\end{remark}
Let $f \in\{1, 4\}$. The case $f=1$ will be used to prove Proposition~\ref{generalLW}, while the case $f=4$ will be used to prove Proposition~\ref{generalLW2}. Let $\tau = f\sqrt{-2n}$, so that $\ZZ[\tau]$ is the order of $K$ of discriminant $-8nf^2$. We define a homomorphism 
$$
\psi_{u, v}: \ZZ[\tau] \rightarrow \ZZ/u\ZZ
$$
by sending $\tau\mapsto 2vf \bmod u$. This homomorphism is well-defined since
$$
\tau^2 = -2nf^2 = -2(u^2-2v^2)f^2 \equiv (2vf)^2\bmod u.
$$
Let
\begin{equation}\label{defu}
\uu = \ker \psi_{u, v}.
\end{equation}
It is the ideal of $\ZZ[\tau]$ generated by $u$ and $2vf-\tau$, i.e., $\uu = (u, 2vf-\tau)$. In case $n = p \equiv -1\bmod 8$ and $f=1$, the ideal class of $\uu$ turns out to have order $4$, as we will see later. We remark that
\begin{equation}\label{congu}
2vf\equiv \tau\bmod \uu
\end{equation}
and that
\begin{equation}\label{normu}
\Norm(\uu) = u.
\end{equation}
Let $\sqrt{\ve\nu}$ be a square root of $\ve\nu$. Then, by Lemma~\ref{lemNor}, the extension $G(\sqrt{\ve\nu})/K$ is cyclic of degree $4$. We are interested in computing the Artin symbol
$$\left(\frac{\uu}{G(\sqrt{\ve\nu})/K}\right).$$
The key idea is to relate this Artin symbol to the Artin symbol associated to a different but related cyclic degree-$4$ extension of $K$. Let
\begin{equation}
\gamma=(2+\sqrt{2})v \in\ZZT.
\end{equation}
Then again by Lemma~\ref{lemNor}, the extension $G(\sqrt{\gamma})/K$ is cyclic of degree $4$. The element $\gamma$ was chosen so that
\begin{equation}\label{pgcong}
\ve\nu\equiv \gamma\bmod \uu,
\end{equation}
and at the same time so that the extension $\QQ(\sqrt{\gamma})/\QQ$ mimics the cyclic degree-$4$ subextension of the cyclotomic extension $\QQ(\zeta_{16})/\QQ$. Finally, let $F$ be the compositum of $G(\sqrt{\ve\nu})$ and $G(\sqrt{\gamma})$. We have the following field diagram.
\begin{center} 
\begin{tikzpicture}
  \draw (0, 0) node[]{$K = \QQ(\sqrt{-2n})$};
  \draw (2, 2) node[]{$G = K(\sqrt{2})$};
	\draw (0, 2) node[]{$K(\sqrt{\beta'})$};
  \draw (-2, 2) node[]{$K(\sqrt{\beta})$};
	\draw (0, 4) node[]{$G(\sqrt{\ve\nu\gamma})$};
	\draw (4, 4) node[]{$G(\sqrt{\gamma})$};
	\draw (2, 6) node[]{$F = G(\sqrt{\ve\nu}, \sqrt{\gamma})$};
	\draw (2, 4) node[]{$G(\sqrt{\ve\nu})$};
  \draw (0, 0.3) -- (0, 1.7);
	\draw (0.3, 0.3) -- (1.7, 1.7);
	\draw (-0.3, 0.3) -- (-1.7, 1.7);
	\draw (0, 2.3) -- (0, 3.7);
	\draw (1.7, 2.3) -- (0.3, 3.7);
	\draw (-1.7, 2.3) -- (-0.3, 3.7);
	\draw (2, 2.3) -- (2, 3.7);
	\draw (2.3, 2.3) -- (3.7, 3.7);
	\draw (2, 4.3) -- (2, 5.7);
	\draw (3.7, 4.3) -- (2.3, 5.7);
	\draw (0.3, 4.3) -- (1.7, 5.7);
\end{tikzpicture}
\end{center}
Here $\beta$ and $\beta'$ are elements of $K$ that are conjugate over $\QQ$. Let $\overline{\ve\nu\gamma} \in\QQT$ be the conjugate of $\ve\nu\gamma$ over $\QQ$. Since 
$$
\left(\sqrt{2\ve\nu\gamma}\pm \sqrt{2\ove\onu\ogamma}\right)^2 = 4v((4u+6v)\pm\sqrt{-2n}) = \frac{4v}{f}\left((4u+6v)f\pm\tau \right),
$$
we can take
$$
\beta = v((4u+6v)f-\tau)\ \ \ \ \ \ \ \ \text{and}\ \ \ \ \ \ \ \ \beta' = v((4u+6v)f+\tau).
$$
The inclusion $\Gal(F/K(\sqrt{\beta}))\subset \Gal(F/K)$ and projections $\Gal(F/K)\twoheadrightarrow \Gal(G(\sqrt{\ve\nu})/K)$ and $\Gal(F/K)\twoheadrightarrow \Gal(G(\sqrt{\gamma})/K)$ induce canonical isomorphisms 
$$
\psi_1: \Gal(F/K(\sqrt{\beta})) \stackrel{\sim}{\longrightarrow} \Gal(G(\sqrt{\ve\nu})/K)
$$
and
$$
\psi_2: \Gal(F/K(\sqrt{\beta})) \stackrel{\sim}{\longrightarrow} \Gal(G(\sqrt{\gamma})/K).
$$
Using \eqref{congu}, we find that if $\pp$ is a prime ideal dividing $\uu$, then
$$
\left(\frac{\beta}{\pp}\right) = \left(\frac{v((4u+6v)f-\tau)}{\pp}\right) = \left(\frac{4v^2f}{\pp}\right) = 1, 
$$
and so $\pp$ splits in $K(\sqrt{\beta})$. By Lemma~\ref{lemRed}, for any prime $\PP$ of $K(\sqrt{\beta})$ lying above a prime ideal $\pp$ dividing $\uu$, we have
$$
\psi_1\left(\left(\frac{\PP}{F/K(\beta)}\right)\right) = \left(\frac{\pp}{G(\sqrt{\ve\nu})/K}\right)
$$
and
$$
\psi_2\left(\left(\frac{\PP}{F/K(\beta)}\right)\right) = \left(\frac{\pp}{G(\sqrt{\gamma})/K}\right).
$$
Multiplying over all prime ideals $\pp$ dividing $\uu$, we have proved the following key lemma.
\begin{lemma}\label{keylemma1}
Let $\uu$ be defined as in \eqref{defu}. Then
$$\psi_2\circ \psi_1^{-1}\left(\left(\frac{\uu}{G(\sqrt{\ve\nu})/K}\right)\right) = \left(\frac{\uu}{G(\sqrt{\gamma})/K}\right).$$
\end{lemma}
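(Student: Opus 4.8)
\emph{Proof sketch.} The plan is to reduce the asserted equality of Artin symbols to a prime‑by‑prime statement and then to invoke Lemma~\ref{lemRed} once for each of the two cyclic quartic extensions $G(\sqrt{\ve\nu})/K$ and $G(\sqrt{\gamma})/K$. Since $\Norm(\uu)=u$ by \eqref{normu}, and $u$ is odd and coprime to the conductor of $\ZZ[\tau]$, the ideal $\uu$ is a product of prime ideals $\pp$ of $K$ lying over odd rational primes; by Lemma~\ref{P2} together with $\gcd(u,v)=1$, each such $\pp$ is unramified in $K(\sqrt{\beta})/K$, in $G(\sqrt{\ve\nu})/K$, and in $G(\sqrt{\gamma})/K$, so all the Artin symbols below are defined and Lemma~\ref{lemRed} applies.

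The one computation to carry out is that each prime $\pp\mid\uu$ splits completely in $K(\sqrt{\beta})$. Working in $\ZZ[\tau]/\uu\cong\ZZ/u\ZZ$ (via $\psi_{u,v}$), we have $u\equiv 0$ since $u\in\uu$, and $\tau\equiv 2vf$ by \eqref{congu}; hence from $\beta=v\big((4u+6v)f-\tau\big)$ we get
$$\beta\equiv v\big(6vf-2vf\big)=4v^{2}f\pmod{\uu}.$$
For $f\in\{1,4\}$ the integer $4v^{2}f$ is a perfect square, and it is coprime to $u$ because $\gcd(u,v)=1$ and $u$ is odd; so $\beta$ reduces to a nonzero square in each residue field $\OO_{K}/\pp$ with $\pp\mid\uu$, and since $\pp\nmid 2$ the polynomial $X^{2}-\beta$ has two distinct roots modulo $\pp$. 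Thus $\pp$ splits in $K(\sqrt{\beta})/K$, and for any prime $\PP$ of $K(\sqrt{\beta})$ above $\pp$ we have $\Norm_{K(\sqrt{\beta})/K}(\PP)=\pp$.

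Next, fix such a pair $\pp\mid\uu$ and $\PP\mid\pp$. Applying Lemma~\ref{lemRed} with the abelian extension $E=G(\sqrt{\ve\nu})$ of $K$ and the intermediate field $L=K(\sqrt{\beta})$, so that $EL=F$ and the restriction‑to‑$E$ map is precisely $\psi_{1}$, and using $\Norm_{K(\sqrt{\beta})/K}(\PP)=\pp$, gives
$$\psi_{1}\!\left(\left(\frac{\PP}{F/K(\sqrt{\beta})}\right)\right)=\left(\frac{\pp}{G(\sqrt{\ve\nu})/K}\right).$$
The identical argument with $E=G(\sqrt{\gamma})$ and restriction map $\psi_{2}$ yields
$$\psi_{2}\!\left(\left(\frac{\PP}{F/K(\sqrt{\beta})}\right)\right)=\left(\frac{\pp}{G(\sqrt{\gamma})/K}\right).$$
Since $\psi_{1}$ is an isomorphism, applying $\psi_{2}\circ\psi_{1}^{-1}$ to the first equality and comparing with the second cancels the common term $\left(\frac{\PP}{F/K(\sqrt{\beta})}\right)$ and gives $\psi_{2}\circ\psi_{1}^{-1}\big(\left(\tfrac{\pp}{G(\sqrt{\ve\nu})/K}\right)\big)=\left(\tfrac{\pp}{G(\sqrt{\gamma})/K}\right)$ for every $\pp\mid\uu$. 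Finally, raising each such equality to the power $\ord_{\pp}(\uu)$ and multiplying over all $\pp\mid\uu$, and using that the Artin map is multiplicative on $I_{K}$ while $\psi_{2}\circ\psi_{1}^{-1}$ is a group homomorphism, we obtain
$$\psi_{2}\circ\psi_{1}^{-1}\!\left(\left(\frac{\uu}{G(\sqrt{\ve\nu})/K}\right)\right)=\left(\frac{\uu}{G(\sqrt{\gamma})/K}\right),$$
as claimed.

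I do not expect a genuine obstacle here: the real work — the choice of $\gamma$ so that $\ve\nu\gamma$ lies in the common quadratic field $K(\sqrt{\beta})$ over which $\psi_{1}$ and $\psi_{2}$ both become isomorphisms, and the verification that $\psi_{1},\psi_{2}$ are isomorphisms — has already been carried out in the discussion preceding the lemma. The only delicate point in the proof itself is the congruence $\beta\equiv 4v^{2}f\bmod\uu$, where both generators $u$ and $2vf-\tau$ of $\uu$ are used; and one must keep the two invocations of Lemma~\ref{lemRed} attached to the \emph{same} compositum $F$ and the \emph{same} prime $\PP$, so that the two left‑hand sides genuinely cancel.
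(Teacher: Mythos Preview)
Your proof is correct and follows essentially the same approach as the paper: compute $\beta\equiv 4v^{2}f\pmod{\uu}$ to see that each $\pp\mid\uu$ splits in $K(\sqrt{\beta})$, apply Lemma~\ref{lemRed} twice with $L=K(\sqrt{\beta})$ to identify both $\psi_1$ and $\psi_2$ of $\left(\tfrac{\PP}{F/K(\sqrt{\beta})}\right)$ with the two Artin symbols of $\pp$, and then multiply over $\pp\mid\uu$. The only (harmless) inaccuracy is the appeal to Lemma~\ref{P2} for unramifiedness in $K(\sqrt{\beta})$ and $G(\sqrt{\gamma})$, since that lemma only treats $G(\sqrt{\ve\nu})/K$; but unramifiedness at $\pp\mid\uu$ in all three extensions follows directly from $\gcd(u,2v)=1$, which you also note.
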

Now we apply Lemma~\ref{lemRed} with $E = \QQ(\sqrt{-2n})$, $F = \QQ$, and $L = \QQ(\sqrt{\gamma})$. We have
$$
\iota\left(\left(\frac{\uu}{G(\sqrt{\gamma})/K}\right)\right) = \left(\frac{u}{\QQ(\sqrt{\gamma})/\QQ}\right),
$$
so that, by Lemma~\ref{keylemma1}, we have
$$\iota\circ \psi_2\circ\psi_1^{-1}\left(\left(\frac{\uu}{G(\sqrt{\gamma})/K}\right)\right) = \left(\frac{u}{\QQ(\sqrt{\gamma})/\QQ}\right).$$
Now observe that $\QQ(\sqrt{\gamma})$ is a subfield of $\QQ(\zeta_{16}\sqrt{v})$. Indeed, $\zeta_{16}\sqrt{v} + \zeta_{16}^{-1}\sqrt{v} = \gamma$. There is a canonical isomorphism
$$
\Gal(\QQ(\zeta_{16}\sqrt{v})/\QQ)\cong (\ZZ/16\ZZ)^{\times}\cong \left\langle -1\bmod 16\right\rangle\times\left\langle 3\bmod 16\right\rangle
$$
given by sending
$$
\left(\zeta_{16}\sqrt{v} \mapsto \zeta_{16}^k\sqrt{v}\right) \mapsto (k\bmod 16).
$$
Then $\QQ(\sqrt{\gamma})$ is the subfield of $\QQ(\zeta_{16}\sqrt{v})$ fixed by $-1$. For each prime $q$ coprime to $2v$, we have
$$
\left(\frac{q}{\QQ(\zeta_{16}\sqrt{v})/\QQ}\right) = q\left(\frac{v}{q}\right) \bmod 16,
$$
so that if we identify $\psi_3: \left\langle 3\bmod 16\right\rangle\cong \mu_4 = \left\langle i\right\rangle\subset \CC^{\times}$ by sending $3\mapsto i = \sqrt{-1}$, we get
$$
\psi_3\left(\left(\frac{q}{\QQ(\sqrt{\gamma})/\QQ}\right)\right) = \left(\frac{v}{q}\right)\chi(q).
$$
Multiplying over all primes $q$ dividing $u$ and using Lemma~\ref{keylemma1}, we finally obtain the following result.
\begin{lemma}\label{keylemma2}
Let $\psi: \Gal(G(\sqrt{\ve\nu})/K)\stackrel{\sim}{\longrightarrow} \mu_4$ be the isomorphism of cyclic groups of order $4$ defined by $\psi = \psi_3\circ\iota\circ \psi_2\circ \psi_1^{-1}$. Then 
$$
\psi\left(\left(\frac{\uu}{G(\sqrt{\ve\nu})/K}\right)\right) = \left(\frac{v}{u}\right)\chi(u).
$$
\end{lemma}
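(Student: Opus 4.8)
The statement collects into one place the chain of identities assembled in the discussion immediately preceding it, so the plan is to run that chain cleanly, the only real work being to confirm that the composite $\psi=\psi_3\circ\iota\circ\psi_2\circ\psi_1^{-1}$ is a genuine isomorphism of cyclic groups of order $4$ and to pass from the ideal $\uu$ to the integer $u$ multiplicatively. For the first point: $\psi_1$ and $\psi_2$ are the restriction isomorphisms already attached to the intermediate field $K(\sqrt\beta)$ of $F/K$; $\iota$ is the restriction map $\Gal(G(\sqrt\gamma)/K)\hookrightarrow\Gal(\QQ(\sqrt\gamma)/\QQ)$, which is injective and hence bijective since both groups are cyclic of order $4$ (the first by Lemma~\ref{lemNor}, the second because $\QQ(\sqrt\gamma)$ is cyclic of degree $4$ over $\QQ$); and $\psi_3$, read through the identification $\Gal(\QQ(\sqrt\gamma)/\QQ)\cong\langle 3\bmod 16\rangle$, is an isomorphism onto $\mu_4$ by construction. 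Hence $\psi$ is well defined and bijective, and the quantity to compute is $\psi\big(\big(\frac{\uu}{G(\sqrt{\ve\nu})/K}\big)\big)$.

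Next I would unwind this value one map at a time. Applying $\psi_2\circ\psi_1^{-1}$ is precisely Lemma~\ref{keylemma1}, so the quantity equals $\psi_3\circ\iota\big(\big(\frac{\uu}{G(\sqrt\gamma)/K}\big)\big)$. To evaluate $\iota$ I would invoke Lemma~\ref{lemRed} with base field $\QQ$, $E=\QQ(\sqrt\gamma)$, and $L=K$, so that $EL=G(\sqrt\gamma)$ (note $\sqrt2\in\QQ(\sqrt\gamma)$); the hypotheses of Lemma~\ref{lemRed} are met because every prime dividing $u$ is odd and coprime to $nv$ — indeed $\gcd(u,v)=1$ together with $n=u^2-2v^2$ forces $\gcd(u,2nv)=1$ — so these primes are unramified in all the fields in play and the relevant Artin symbols are defined. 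Since $\Norm(\uu)=u$ yields $\Norm_{K/\QQ}(\uu)=(u)$, Lemma~\ref{lemRed} gives $\iota\big(\big(\frac{\uu}{G(\sqrt\gamma)/K}\big)\big)=\big(\frac{u}{\QQ(\sqrt\gamma)/\QQ}\big)$. Finally, for each prime $q\mid u$, the reciprocity computation carried out before the statement — using $\QQ(\sqrt\gamma)\subset\QQ(\zeta_{16}\sqrt v)$, the identification $\Gal(\QQ(\zeta_{16}\sqrt v)/\QQ)\cong(\ZZ/16\ZZ)^{\times}$, and the factorization of the Frobenius at $q$ into a cyclotomic part and the quadratic character $\big(\frac{v}{q}\big)$ — gives $\psi_3\big(\big(\frac{q}{\QQ(\sqrt\gamma)/\QQ}\big)\big)=\big(\frac{v}{q}\big)\chi(q)$.

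It then remains to multiply over the prime divisors of $u$. Since the Artin map is multiplicative on ideals, $\big(\frac{u}{\QQ(\sqrt\gamma)/\QQ}\big)=\prod_q\big(\frac{q}{\QQ(\sqrt\gamma)/\QQ}\big)^{\ord_q(u)}$, and therefore, applying $\psi_3$, $\psi\big(\big(\frac{\uu}{G(\sqrt{\ve\nu})/K}\big)\big)=\prod_q\big(\big(\frac{v}{q}\big)\chi(q)\big)^{\ord_q(u)}=\big(\frac{v}{u}\big)\chi(u)$, by complete multiplicativity of the Jacobi symbol in its denominator together with the multiplicativity of the Dirichlet character $\chi$ on the odd integer $u$. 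This is the asserted equality.

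The part requiring the most care is the orientation bookkeeping already flagged before the statement: because $\mu_4$ carries the nontrivial automorphism $i\mapsto-i$, one must check that each of the four maps is pinned down consistently — that restricting $\big(\frac{\PP}{F/K(\sqrt\beta)}\big)$ through $\psi_1$ and through $\psi_2$ returns the Artin symbols $\big(\frac{\pp}{G(\sqrt{\ve\nu})/K}\big)$ and $\big(\frac{\pp}{G(\sqrt\gamma)/K}\big)$ themselves rather than their inverses, and that the identification sending $3\bmod 16$ to $i$ is the one used throughout the Frobenius computation. Once these normalizations are fixed there is no further obstruction; the genuine content of the argument sits in Lemma~\ref{keylemma1} and in the reciprocity computation preceding the statement, and the present proof is essentially their bookkeeping.
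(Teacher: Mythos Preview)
Your proposal is correct and follows essentially the same route as the paper: invoke Lemma~\ref{keylemma1} to pass from $G(\sqrt{\ve\nu})$ to $G(\sqrt{\gamma})$, apply Lemma~\ref{lemRed} with $F=\QQ$, $E=\QQ(\sqrt{\gamma})$, $L=K$ to descend the Artin symbol to $\big(\frac{u}{\QQ(\sqrt{\gamma})/\QQ}\big)$, then use the embedding $\QQ(\sqrt{\gamma})\subset\QQ(\zeta_{16}\sqrt{v})$ and multiply over primes $q\mid u$. Your extra verification that $\iota$ is bijective (via $[\,\QQ(\sqrt{\gamma}):\QQ\,]=4$ and $K\cap\QQ(\sqrt{\gamma})=\QQ$) and your remark on the $i\mapsto -i$ orientation issue are welcome additions that the paper leaves implicit.
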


\subsection{An ideal identity}\label{IdealIdentity}
We keep the same notation as in Sections \ref{SpecialFields} and \ref{ArtinComp}. Recall that $\tau = f\sqrt{-2n}$, where $f\in \{1, 4\}$. Let $\pt_f$ be the ideal of $\ZZ[\tau]$ defined as the kernel of the homomorphism
$$
\tau_f: \ZZ[\tau] \rightarrow \ZZ/2f^2\ZZ
$$
given by sending $\tau\mapsto 2vf$. The homomorphism $\tau_f$ is well-defined because
$$
\tau^2 = -2nf^2 = 4v^2f^2 - 2u^2f^2 \equiv (2vf)^2 \bmod 2f^2.
$$
Then $\pt_f = (2vf-\tau, 2f^2)$. The following identity of between ideals in $\ZZ[\tau]$ will be useful in proofs of both Proposition~\ref{generalLW} and Proposition~\ref{generalLW2}.
\begin{lemma}\label{LemmaIdealIdentity}
Let $\uu$ be defined as in \eqref{defu}. Then
$$
(2vf-\tau) = \pt_f\uu^2.
$$
\end{lemma}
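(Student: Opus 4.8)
The plan is to prove the identity $(2vf-\tau)=\pt_f\uu^2$ by establishing the two inclusions separately, bearing in mind that $\ZZ[\tau]$ is in general a non-maximal order, so we cannot appeal to unique factorization of ideals.

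I would first write down generators of $\pt_f\uu^2$. Since $\uu=(u,\,2vf-\tau)$, we have $\uu^2=(u^2,\,u(2vf-\tau),\,(2vf-\tau)^2)$, and multiplying by $\pt_f=(2vf-\tau,\,2f^2)$ exhibits $\pt_f\uu^2$ as the ideal generated by the six elements
$$
u^2(2vf-\tau),\quad u(2vf-\tau)^2,\quad (2vf-\tau)^3,\quad 2f^2u^2,\quad 2f^2u(2vf-\tau),\quad 2f^2(2vf-\tau)^2.
$$
Five of these are patently divisible by $2vf-\tau$ in $\ZZ[\tau]$; for the sixth, $2f^2u^2$, the key (routine) computation is the norm identity
$$
(2vf-\tau)(2vf+\tau)=4v^2f^2-\tau^2=4v^2f^2+2nf^2=2u^2f^2,
$$
which, as $2u^2f^2=2f^2u^2$, displays $2f^2u^2$ as a multiple of $2vf-\tau$ as well. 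This gives the inclusion $\pt_f\uu^2\subseteq(2vf-\tau)$.

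For the reverse inclusion I would use the inclusion just proved to divide each of the six generators by $2vf-\tau$ in the fraction field: the quotients all lie in $\ZZ[\tau]$ and generate an ideal
$$
J=\bigl(u^2,\ u(2vf-\tau),\ (2vf-\tau)^2,\ 2vf+\tau,\ 2f^2u,\ 2f^2(2vf-\tau)\bigr)
$$
(using $2f^2u^2/(2vf-\tau)=2vf+\tau$ from the norm identity), with $\pt_f\uu^2=(2vf-\tau)\,J$. It therefore suffices to show $J=\ZZ[\tau]$, i.e.\ $1\in J$. Since $u$ is odd and $f\in\{1,4\}$, we have $\gcd(u,2f^2)=1$, hence $\gcd(u^2,2f^2u)=u$, and so $u\in J$; it then remains to show that $J$ surjects onto $R:=\ZZ[\tau]/u\ZZ[\tau]$. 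Working modulo $u$, where $\tau^2=-2nf^2\equiv(2vf)^2$, a short computation gives
$$
4vf\,(2vf+\tau)+(2vf-\tau)^2\equiv16v^2f^2\pmod{u\ZZ[\tau]},
$$
and the left-hand side lies in $J$, so $16v^2f^2$ lies in the image of $J$ in $R$. Because $\gcd(u,v)=1$ and $\gcd(u,2f)=1$, the integer $16v^2f^2$ is a unit in $R$; hence the image of $J$ is all of $R$, and therefore $1\in J+u\ZZ[\tau]=J$. This completes the second inclusion, and with it the lemma.

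The only genuinely delicate point is the reverse inclusion. One cannot simply match norms, because $\pt_f$ need not be an invertible ideal of the order $\ZZ[\tau]$ when $f=4$, so multiplicativity of ideal norms is unavailable. The device that makes the argument go through is the reduction modulo $u$ together with the hypothesis $\gcd(u,v)=1$, which is exactly what forces $16v^2f^2$ to be a unit modulo $u$ and collapses $J$ to the full ring. (Alternatively, one could observe that $\uu$ is invertible since $\Norm(\uu)=u$ is coprime to the conductor, recast the claim as $\pt_f=(2vf-\tau)\,\uu^{-2}$, and finish by a norm count; but the elementary route above avoids invoking the structure theory of non-maximal orders.)
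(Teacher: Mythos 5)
Your proof is correct, and it takes a genuinely different route from the paper's. The paper argues via invertibility and a norm count: it observes that $(2vf-\tau)$ is principal (hence invertible) of norm $2u^2f^2$, that $\uu$ is invertible because $u$ is odd and $\gcd(u,v)=1$ make $\uu$ coprime to the discriminant $-8nf^2$, deduces $\uu^2\mid(2vf-\tau)$ from the fact that no rational prime divides $2vf-\tau$, and then identifies the invertible quotient $(2vf-\tau)\uu^{-2}$ with $\pt_f$ by comparing norms --- exactly the ``alternative'' you sketch in your closing parenthesis. Your argument instead verifies the equality of ideals by hand: the norm identity $(2vf-\tau)(2vf+\tau)=2u^2f^2$ gives the inclusion $\pt_f\uu^2\subseteq(2vf-\tau)$ generator by generator, and the reverse inclusion is reduced to showing the quotient ideal $J$ is the unit ideal, which you do by first extracting $u\in J$ from $\gcd(u^2,2f^2u)=u$ and then producing the unit $16v^2f^2$ of $\ZZ[\tau]/u\ZZ[\tau]$ inside the image of $J$ via the combination $4vf(2vf+\tau)+(2vf-\tau)^2$. (Both coprimality inputs you use, $\gcd(u,v)=1$ and $u$ odd, are standing hypotheses carried over from Section~\ref{SpecialFields}, so nothing is missing.) What your approach buys is independence from the structure theory of non-maximal orders: you never need $\uu$ or $\pt_f$ to be invertible, nor multiplicativity of ideal norms, which is a genuine simplification given that $\pt_f$ lies over $2$ and hence meets the conductor when $f=4$. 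What the paper's approach buys is brevity and the explicit identification of $\pt_f$ as the invertible ideal $(2vf-\tau)\uu^{-2}$, which is the form in which the identity is actually deployed later (e.g.\ in Lemma~\ref{sameclass}, where one divides the two relations to compare $\uu_1^2$ and $\uu_2^2$).
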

\begin{proof}
The principal ideal $2vf-\tau$ is invertible of norm $2u^2f^2$. Since $u$ is odd and $\gcd(u, v) = 1$, we deduce that $\uu$ is coprime to the discriminant $-8nf^2$ of $\ZZ[\tau]$ and is thus invertible. No rational primes can divide $2vf-\tau$ and $\uu$ divides $(2vf-\tau)$ by definition, so it must be that $\uu^2$ divides $(2vf-\tau)$.
\\\\
The ideal $\pt_f$ of norm $2f^2$ contains $(2vf-\tau)$ and has the same norm as the invertible ideal $(2vf-\tau)\uu^{-2}$. Hence we must have $(2vf-\tau)\uu^{-2} = \pt_f$.
\end{proof}

\subsection{Proof of Proposition~\ref{generalLW}}
We apply the results of Sections \ref{ArtinComp} and \ref{IdealIdentity} in the case $n = p\equiv -1\bmod 8$ is a prime number and $f = 1$. In this case there exist integers $u$ and $v$ such that $p = u^2-2v^2$, and the congruence $p\equiv -1\bmod 8$ immediately implies that both $u$ and $v$ are odd. Without loss of generality, we may assume that $u$ is positive and 
\begin{equation}\label{vcong}
v\equiv 1\bmod 4.
\end{equation}
Since the $2$-part of $\CL(-8p)$ is cyclic, $\rk_{16}\CL(-8p) = 1$ if and only if $\CL(-8p)$ has an element of order $16$. To get started, we first produce an element of order $4$ in $\CL(-8p)$ that we can write explicitly in terms of $u$ and $v$.

\subsubsection{A class of order $4$}
We now produce an ideal generating a class of order $4$ in the class group $\CL(-8p)$ when $p$ is a prime $\equiv -1\bmod 8$. This is the main ingredient in \cite{LW82}.
\\\\
When $n = p$ and $f = 1$, the ideal $\pt = \pt_f$ defined in Section \ref{IdealIdentity} is the prime ideal lying above $2$. If $\pt=(x+y\sqrt{-2p})$ for some $x, y\in \ZZ$, then $x^2+2py^2 = \Norm(\pt) = 2$, which is impossible. Hence the class of $\pt$ in $\CL(-8p)$ has order $2$.
\\\\
Now let $\uu$ be defined as in \eqref{defu} with $u$ and $v$ as above and $f = 1$. Lemma~\ref{LemmaIdealIdentity} shows that $\uu^2$ and $\pt$ are in the same ideal class in $\CL(-8p)$. Hence we have proved the following result.
\begin{lemma}\label{ord4}
Let $\uu$ be the ideal of $\ZZP$ defined as above. Then the ideal class of $\uu$ has order $4$ in $\CL(-8p)$.
\end{lemma}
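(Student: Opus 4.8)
The plan is to read off the order of the class $[\uu]$ directly from the ideal identity of Lemma~\ref{LemmaIdealIdentity}. Specializing that lemma to the situation at hand ($n = p$, $f = 1$), we obtain the factorization of principal ideals
\[
(2v-\tau) = \pt\,\uu^2
\]
in $\ZZP$, where $\pt = \pt_1$ is the prime above $2$ and $\uu$ is the ideal defined in \eqref{defu}. Since $u$ is odd and $\gcd(u,v) = 1$, the ideal $\uu$ is coprime to the discriminant $-8p$, hence invertible, and so defines a class in $\CL(-8p)$. Passing to ideal classes, the displayed identity reads $[\uu]^2 = [\pt]^{-1}$.

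It remains to pin down the order of $[\pt]$. As $2$ ramifies in $\QQ(\sqrt{-2p})$ we have $\pt^2 = (2)$, so $[\pt]$ has order dividing $2$; and $[\pt] \ne 1$, because a generator of $\pt$ would be an element of norm $2$, i.e.\ a solution of $x^2 + 2py^2 = 2$ with $x, y\in\ZZ$, which is impossible for $p > 1$. Hence $[\pt]$ has order exactly $2$, and in particular $[\pt]^{-1} = [\pt]$. Therefore $[\uu]^2 = [\pt]$ has order $2$, so $[\uu]$ has order exactly $4$ in $\CL(-8p)$.

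There is no real obstacle here: all the substance is contained in the ideal identity of Lemma~\ref{LemmaIdealIdentity} and in the elementary non-representability of $2$ by $x^2 + 2py^2$. The one point deserving a word of care is that the relation produced is $[\uu]^2 = [\pt]^{-1}$ rather than $[\uu]^2 = [\pt]$, but this is harmless since $[\pt]$ is $2$-torsion. The purpose of the lemma is simply to exhibit, explicitly in terms of $u$ and $v$, a class of order $4$ in $\CL(-8p)$; the remainder of the proof of Proposition~\ref{generalLW} will characterize when this class lies in $4\,\CL(-8p)$, equivalently when $\CL(-8p)$ has an element of order $16$.
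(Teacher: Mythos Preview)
Your proof is correct and follows essentially the same approach as the paper: both invoke Lemma~\ref{LemmaIdealIdentity} (specialized to $n=p$, $f=1$) to obtain $[\uu]^2=[\pt]$, and both show $[\pt]$ has order exactly $2$ by the impossibility of $x^2+2py^2=2$. Your version is slightly more explicit in noting $\pt^2=(2)$ and in handling the harmless distinction between $[\pt]$ and $[\pt]^{-1}$.
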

\begin{remark}
Perhaps an easier, although more old-fashioned, way to prove Lemma~\ref{ord4} is via the theory of binary quadratic forms, as was done in \cite{LW82}. Let $[a, b, c]$ denote the $\SL$-equivalence class of the form $ax^2+bxy+cy^2$. The key observation is that $[u, -4v, 2u]$ has discriminant $16v^2-8u^2 = -8p$. To compose this class with itself, one can use the special case of the composition law for \textit{concordant forms}, which yields the class $[u, -4v, 2u]^2 = [u^2, -4v, 2] = [2, 0, p]$. The classes $[u, -4v, 2u]$ and $[2, 0, p]$ correspond to the ideal classes of $\uu$ and $\pt$, respectively.
\end{remark}

\subsubsection{Generating the $4$-Hilbert class field}
Let $p$ be a prime congruent to $-1\bmod 8$ and let $K = \QQP$. The $2$-Hilbert class field, also called the \textit{genus field} of $K$, is known to be $H_2 = K(\sqrt{2})$. Lemma~\ref{ord4} implies that $\rk_4\CL(-8p) = 1$, and our aim is to generate the $4$-Hilbert class field $H_4$ over $H_2$ by adjoining an element that we can write explicitly in terms of $u$ and $v$.
\\\\
Define $\pi\in\ZZT$ by setting $\pi = \nu$ with $\nu$ as in \eqref{nu}, i.e., $\pi = u+v\sqrt{2}$. The following proposition achieves our aim.
\begin{prop}\label{4HCF}
Let $K = \QQP$, and let $\pi$ be as above. Then the $4$-Hilbert class field of $K$ is
$$H_4 = H_2(\sqrt{\ve\pi}).$$
\end{prop}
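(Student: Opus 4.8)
The plan is to identify $H_2(\sqrt{\ve\pi})$ with the field $L_{u,v}$ of Section~\ref{SpecialFields} and to prove that this field is the unique unramified cyclic quartic extension of $K$, which therefore must be $H_4$.

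To set up the identification, note that since $p\equiv -1\bmod 8$ is prime and $p = u^2-2v^2$, the integers $u$ and $v$ are coprime (a common prime divisor $\ell$ would force $\ell^2\mid p$), $u$ is odd and positive, and $p>0$; thus $u$ and $v$ satisfy the standing hypotheses of Section~\ref{SpecialFields}, with $n = m = p$ and $d = 1$. In particular $G = K(\sqrt{2})$ is exactly the genus field $H_2$, $\pi = \nu = u+v\sqrt{2}$, and $H_2(\sqrt{\ve\pi})$ is precisely the field $L = L_{u,v}$. I now invoke the structural results already established for $L$: by Lemma~\ref{P1}, $L/K$ is cyclic of degree $4$; by \eqref{vcong} we have $v\equiv 1\bmod 4$, so part~(1) of Lemma~\ref{P3} gives that $L/K$ is unramified, and together with Lemma~\ref{P2} this shows $L/K$ is unramified at every prime of $K$. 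Hence $L$ is a cyclic quartic subfield of the Hilbert class field $H$ of $K$, corresponding under \eqref{ArtSym} to a subgroup of index $4$ in $\CL(-8p)$.

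Finally, by Lemma~\ref{ord4} the class group $\CL(-8p)$ has an element of order $4$, so $\rk_4\CL(-8p) = 1$; since $-8p$ has exactly two prime divisors, Gauss's genus theory forces the $2$-part of $\CL(-8p)$ to be cyclic, and so, as recorded in the discussion of $2^n$-Hilbert class fields, $K$ admits a \emph{unique} unramified cyclic extension of degree $4$, namely $H_4$. As $L$ is such an extension, $L = H_4$, which is the assertion. The only input here beyond soft class field theory is the cyclicity of the $2$-part of $\CL(-8p)$, which is exactly what makes $H_4$ unique and thereby pins down $L$ among all unramified abelian extensions of $K$; everything else is assembly of Lemmas~\ref{P1}, \ref{P2}, \ref{P3}, and~\ref{ord4}, so I do not anticipate a genuine obstacle in this proof — the real work has already been carried out in those lemmas.
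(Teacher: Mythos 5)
Your proof is correct and follows essentially the same route as the paper: identify $H_2(\sqrt{\ve\pi})$ with the field $L$ of Section~\ref{SpecialFields}, invoke Lemma~\ref{P1} for cyclicity of degree $4$, Lemmas~\ref{P2} and~\ref{P3}(1) (using $v\equiv 1\bmod 4$) for unramification at every prime of $K$, and then conclude by the uniqueness of $H_4$ coming from cyclicity of the $2$-part of $\CL(-8p)$. The paper's own proof is a terser version of exactly this assembly (it does not restate Lemma~\ref{ord4} inside the proof since $\rk_4\CL(-8p)=1$ follows already from the existence of the unramified cyclic quartic extension, but your explicit appeal to it is harmless).
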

\begin{proof}
Since the $2$-part of the class group $\CL(-8p)$ is cyclic, it suffices to show that $H_2(\sqrt{\ve\pi})$ is an unramified, cyclic, degree-$4$ extension of $K$.
\\\\
We apply the lemmas of Sections \ref{SpecialFields} and \ref{ArtinComp} with $n = m = p$, $e = 1$, and $u$ and $v$ as above. By Lemma~\ref{P1}, the extension $H_2(\sqrt{\ve\pi})/K$ is cyclic of degree $4$. By Lemma~\ref{P2}, $H_2(\sqrt{\ve\pi})/K$ is unramified over the prime ideal $\pp = (p, \sqrt{-2p})$ of $K$ lying over $p$. Finally, by part (1) of Lemma~\ref{P3}, $H_2(\sqrt{\ve\pi})/K$ is unramified over the prime ideal $\pt = (2, \sqrt{-2p})$ of $K$ lying over $2$.
\end{proof}

\subsubsection{Conclusion of the proof of Proposition~\ref{generalLW}}
By Lemma~\ref{ord4}, $\rk_{16}\CL(-8p) = 1$ if and only if the ideal class of $\uu$ belongs to $\CL(-8p)^4$. By Proposition~\ref{4HCF}, this is true if and only if the Artin symbol of $\uu$ in $H_4 = H_2(\sqrt{\ve\pi})$ is trivial. In the notation of Section \ref{ArtinComp}, we have that $H_2 = G$, so that $\rk_{16}\CL(-8p) = 1$ if and only if 
$$
\left(\frac{\uu}{G(\sqrt{\ve\pi})/K}\right) = \mathrm{Id}.
$$
By Lemma~\ref{keylemma2}, this occurs if and only if $\left(\frac{v}{u}\right)\chi(u) = 1$, which proves Proposition~\ref{generalLW}.

\subsection{Proof of Proposition~\ref{generalLW2}}
As in the statement of Proposition~\ref{generalLW2}, let $u_1$ and $v_1$ be integers such that $u_1$ is odd and positive and such that $u_1^2-2v_1^2>0$. We define $u_2$ and $v_2$ by the equality
\begin{equation}\label{defu2v2}
u_2+v_2\sqrt{2} = \varepsilon^8(u_1+v_1\sqrt{2}) =(577u_1+816v_1) + (408u_1+577v_1)\sqrt{2},
\end{equation}
where, as before, $\varepsilon = 1+\sqrt{2}$. Our goal is to prove the following equality of Jacobi symbols
\begin{equation}\label{prop2goal}
\left(\frac{v_1}{u_1}\right) = \left(\frac{v_2}{u_2}\right).
\end{equation}
By the Euclidean algorithm, we have the equality
$$
\gcd(u_1, v_1) = \gcd(u_2, v_2).$$
First, if $\gcd(u_1, v_1) = \gcd(u_2, v_2)> 1$, then both sides of \eqref{prop2goal} are equal to $0$, and hence \eqref{prop2goal} holds true.
\\\\
Now suppose $\gcd(u_1, v_1) = \gcd(u_2, v_2) = 1$. Let
$$
n = u_1^2-2v_1^2 = u_2^2-2v_2^2,
$$
and let $K = \QQ(\sqrt{-2n})$ as in Section \ref{SpecialFields}. 
Set $\tau = 4\sqrt{-2n}$. Let $\uu_1$ (resp. $\uu_2$) be the ideal of the imaginary quadratic order $\ZZ[\tau]$ (of discriminant $16\cdot -8n$) defined by \eqref{defu} with $(u, v) = (u_1, v_1)$ (resp. $(u, v) = (u_2, v_2)$) and $f = 4$. The ideals $\uu_1$ and $\uu_2$ satisfy the following key property.
\begin{lemma}\label{sameclass}
The ideals $\uu_1$ and $\uu_2$ belong to the same ideal class in the class group $\CL(16\cdot -8n)$ of the imaginary quadratic order $\ZZ[\tau]$.
\end{lemma}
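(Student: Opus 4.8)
The plan is to produce an explicit element $\mu\in\ZZ[\tau]$ and prove the identity of ideals $\uu_1\,\overline{\uu_2}=(\mu)$, where $\overline{\uu_2}$ denotes the complex-conjugate ideal. Since $\uu_2$ is coprime to the discriminant $16\cdot -8n$ of $\ZZ[\tau]$ it is invertible, so $\uu_2\overline{\uu_2}=(\Norm\uu_2)=(u_2)$; granting the identity above, $(u_2)\uu_1=\uu_2\overline{\uu_2}\uu_1=(\mu)\uu_2$, and hence $[\uu_1]=[\uu_2]$ in $\CL(16\cdot -8n)$, which is the assertion of the lemma.

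\textbf{Locating $\mu$.} To find $\mu$ I would first invoke Lemma~\ref{LemmaIdealIdentity}. From $v_2-v_1=408u_1+576v_1=8(51u_1+72v_1)$ one gets $8v_1-\tau\equiv 8v_2-\tau\pmod{32\,\ZZ[\tau]}$, so the ideal $\pt_4=(8v_i-\tau,\,32)$ is the same for $i=1,2$; thus Lemma~\ref{LemmaIdealIdentity} gives $(8v_1-\tau)=\pt_4\uu_1^2$ and $(8v_2-\tau)=\pt_4\uu_2^2$, whence $\uu_1^2\uu_2^{-2}=\bigl((8v_1-\tau)/(8v_2-\tau)\bigr)$ as fractional ideals. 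Clearing the denominator by the conjugate and simplifying with $n=u_1^2-2v_1^2=u_2^2-2v_2^2$ yields
$$\frac{8v_1-\tau}{8v_2-\tau}=\frac{u_1^2+2w(8v_1-\tau)}{u_2^2},\qquad w:=51u_1+72v_1=3(17u_1+24v_1),$$
and the crucial point is that the numerator is a square in $\ZZ[\tau]$: $u_1^2+2w(8v_1-\tau)=\mu^2$ with
$$\mu:=3\tau-(17u_1+24v_1).$$
(One is led to this since $\Norm(\mu)=(17u_1+24v_1)^2+288n=577u_1^2+816u_1v_1=u_1u_2$, so the numerator has norm $(u_1u_2)^2$ and a square root may be extracted integrally.) This already shows $\uu_1^2\uu_2^{-2}=(\mu/u_2)^2$.

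\textbf{Proving the un-squared identity.} To upgrade this to $\uu_1\overline{\uu_2}=(\mu)$, I would verify directly that $\mu$ divides in $\ZZ[\tau]$ each of the four module generators $u_1u_2$, $u_1(8v_2+\tau)$, $u_2(8v_1-\tau)$, $(8v_1-\tau)(8v_2+\tau)$ of $\uu_1\overline{\uu_2}=(u_1,\,8v_1-\tau)(u_2,\,8v_2+\tau)$. Using $\mu\overline{\mu}=u_1u_2$ this is a finite computation with $u_2=577u_1+816v_1$, $v_2=408u_1+577v_1$, $\tau^2=-32n$; the key intermediate identities are
$$(8v_1-\tau)\overline{\mu}=u_1(17\tau-96u_1-136v_1),\qquad (8v_2+\tau)\overline{\mu}=-u_2(17\tau+96u_1+136v_1),$$
from which all four quotients are seen to lie in $\ZZ[\tau]$. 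Hence $\uu_1\overline{\uu_2}\subseteq(\mu)$; since both are full ideals of $\ZZ[\tau]$ of index $\Norm(\uu_1)\Norm(\uu_2)=u_1u_2=|\Norm_{K/\QQ}(\mu)|$, they coincide, completing the proof.

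\textbf{Where the difficulty lies.} The main obstacle is reaching the right statement rather than checking it. First, $\mu$ must be guessed, which is precisely the content of the square-root computation above (one also has to spot the convenient factorization $51u_1+72v_1=3(17u_1+24v_1)$ and the identity $\Norm(\mu)=u_1u_2$). Second, and more essentially: the square relation $\uu_1^2\uu_2^{-2}=(\mu/u_2)^2$ does \emph{not} by itself yield $[\uu_1]=[\uu_2]$, since here $n$ is an arbitrary (typically composite) integer, so $\CL(16\cdot -8n)$ can have large $2$-rank and the $2$-torsion class $[\uu_1\uu_2^{-1}(u_2/\mu)]$ need not vanish. Establishing the exact ideal identity removes this ambiguity. (Alternatively, one may note that $\uu_1$, $\uu_2$ and $\mu$ are all coprime to the conductor $4d$ of $\ZZ[\tau]$, where $n=d^2m$ with $m$ squarefree: then $\uu_1\uu_2^{-1}(u_2/\mu)$, an invertible $2$-torsion fractional ideal, is locally trivial at the conductor primes for norm reasons and at all other primes because $\ZZ[\tau]$ is a discrete valuation ring there, hence it is globally trivial.)
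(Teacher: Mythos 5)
Your proposal is correct and follows essentially the same route as the paper: the same appeal to Lemma~\ref{LemmaIdealIdentity}, the same observation that $\pt_{4,1}=\pt_{4,2}$ via $8v_2\equiv 8v_1\bmod 32$, and the same auxiliary element (your $\mu=3\tau-(17u_1+24v_1)$ is $-\overline{\alpha}$ for the paper's $\alpha=(17u_1+24v_1)+3\tau$, and your squared identity is the paper's \eqref{compareu1u22} rearranged). The only divergence is the final square-root extraction: the paper passes from $u_1^2\uu_2^2=\alpha^2\uu_1^2$ to $u_1\uu_2=\alpha\uu_1$ by unique factorization of ideals prime to the conductor --- which is exactly your parenthetical alternative and correctly disposes of the $2$-torsion worry you raise --- while your primary route of verifying $\uu_1\overline{\uu_2}=(\mu)$ directly on the four module generators is a valid but more computational variant of the same step.
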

\begin{proof}
Let $k\in\{1, 2\}$. By Lemma~\ref{LemmaIdealIdentity}, we have
$$
(8v_k-\tau) = \pt_{4, k}\uu_k^2
$$
where $\pt_{4, k} = (8v_k-\tau, 32)$ is as in Section \ref{IdealIdentity}. By \eqref{defu2v2}, we have
$$
8v_2 = 8(408u_1+577v_1) = 8v_1 + 32(102u_1+144v_1),
$$
so that
$$
\pt_{4, 2} = (8v_2-\tau, 32) = (8v_1 - \tau, 32) = \pt_{4, 1}.
$$
Therefore
\begin{equation}\label{compareu1u2}
\uu_2^2 = \frac{8v_2-\tau}{8v_1-\tau}\uu_1^2.
\end{equation}
Let $\alpha = (17u_1+24v_1)+3\tau$. We claim that
\begin{equation}\label{alphaclaim}
\left(\frac{\alpha}{u_1}\right)^2 = \frac{8v_2-\tau}{8v_1-\tau}.
\end{equation}
We first note that
\begin{equation}\label{fracexp}
\begin{array}{rcl}
\displaystyle{\frac{8v_2-\tau}{8v_1-\tau}} & = & \displaystyle{\frac{8v_2-\tau}{8v_1-\tau}\cdot \frac{8v_1+\tau}{8v_1+\tau}}\\
 & = & \displaystyle{\frac{64v_1v_2 + 32n + 8(v_2-v_1)\tau}{64v_1^2+32n}} \\
 & = & \displaystyle{\frac{64v_1(408u_1+577v_1) + 32n + 8(408u_1+576v_1)\tau}{32u_1^2}} \\
 & = & \displaystyle{\frac{n + 2v_1(408u_1+577v_1) + (102u_1+144v_1)\tau}{u_1^2}}.
\end{array} 
\end{equation}
Expanding $\alpha^2$, we get
\begin{equation}\label{alphasq}
\begin{array}{rcl}
\alpha^2 & = & 289u_1^2+576v_1^2+816u_1v_1-288n+(102u_1+144v_1)\tau \\
 & = & u_1^2+1152 v_1^2 + 816u_1v_1 +(102u_1+144v_1)\tau \\
 & = & n + 1154v_1^2 + 816u_1v_1 +(102u_1+144v_1)\tau \\
 & = & n + 2v_1(408u_1+577v_1) +(102u_1+144v_1)\tau.    
\end{array}
\end{equation}
Comparing the last line of \eqref{alphasq} with the numerator in the last line of \eqref{fracexp}, we obtain \eqref{alphaclaim}.
\\\\
Now \eqref{compareu1u2} and \eqref{alphaclaim} imply that
\begin{equation}\label{compareu1u22}
u_1^2\uu_2^2 = \alpha^2\uu_1^2.
\end{equation}
By \eqref{normu}, $\Norm(\uu_2) = u_2$. Hence $\Norm(\uu_2)$ is odd, and since $u_1$ is also odd, we find that $u_1^2\uu_2^2$ is coprime to the conductor $f = 4$ of $\ZZ[\tau]$, and hence factors uniquely into prime ideals. Therefore \eqref{compareu1u22} implies that $u_1\uu_2 = \alpha \uu_1$, which proves the lemma.
\end{proof}
\begin{remark}
There is a shorter proof of Lemma~\ref{sameclass} via the theory of binary quadratic forms. The $\SL$-equivalence classes of binary quadratic forms of discriminant $16\cdot -8n$ corresponding to the ideals $\uu_1$ and $\uu_2$ of $\ZZ[\tau]$ are $[u_1, 16v_1, 32u_1]$ and $[u_2, 16v_2, 32u_2]$, respectively. The matrix
$$\left( \begin{array}{cc}
17 & 96 \\
3 & 17 \end{array} \right) \in \SL$$
transforms the quadratic form $[u_1, 16v_1, 32u_1]$ into $[u_2, 16v_2, 32u_2]$, which proves the lemma.
\end{remark}
Now, for $k\in\{1, 2\}$, define $\nu_k = u_k+v_k\sqrt{2}$ similarly as in Section \ref{SpecialFields}. Then
\begin{equation}\label{pi1pi2}
\nu_2 = \varepsilon^8\nu_1.
\end{equation}
Since $\sqrt{2}$ is contained in $G = K(\sqrt{2})$, $\epsilon^8$ is a square in $G$. Hence the fields $G(\sqrt{\ve\nu_1})$ and $G(\sqrt{\ve\nu_2})$ are equal, and so we define
$$
L = G(\sqrt{\ve\nu_1}) = G(\sqrt{\ve\nu_2}).
$$
By Lemma~\ref{P4}, $L$ is contained in the ring class field of $\ZZ[\tau]$. Hence, by Lemma~\ref{sameclass}, the images of both $\uu_1$ and $\uu_2$ under the map \eqref{RCF1} coincide, i.e.,
$$
\left(\frac{\uu_1}{L/K}\right) = \left(\frac{\uu_2}{L/K}\right).
$$
Applying Lemma~\ref{keylemma2}, we get
$$
\left(\frac{v_1}{u_1}\right)\chi(u_1) = \left(\frac{v_2}{u_2}\right)\chi(u_2).
$$
Equation \eqref{defu2v2} implies that
\begin{equation}\label{upu16}
u_2 = 577u_1+816v_1 \equiv u_1\bmod 16.
\end{equation}
Hence, as $\chi$ is a character modulo $16$, we have $\chi(u_1) = \chi(u_2)$, and so Proposition~\ref{generalLW2} is finally proved.  

\section{Sums over primes}\label{Sumsoverprimes}
Above, we defined the governing symbol $\left\langle p \right\rangle$ for a prime $p\equiv -1\bmod 16$ in terms of particular integer solutions $u$ and $v$ to the equation $p = u^2-2v^2$. The main lemma that we will use to prove Theorem~\ref{mainThm}, i.e., that these governing symbols oscillate, is a proposition due to Friedlander, Iwaniec, Mazur and Rubin \cite{FIMR}. We now state this proposition in our context.

\subsection{A result of Friedlander, Iwaniec, Mazur, and Rubin}
Recall that an element $w = u+v\sqrt{2}\in\ZZT$ is \textit{totally positive} if and only if $\Norm(w) = u^2-2v^2>0$ \textit{and} $u>0$. We sometimes write $w\succ 0$ to say that $w$ is totally positive. 
\\\\
Since $\ZZT$ is a principal ideal domain and since the norm of the fundamental unit $\varepsilon$ over $\QQ$ is $-1$, an ideal $\nn$ in $\ZZT$ can always be generated by a totally positive element. For an ideal $\nn$ of $\ZZT$, recall that the norm of $\nn$ is given by
$$\Norm(\nn):= u^2-2v^2,$$
where $u+v\sqrt{2}$ is a totally positive generator of $\nn$.
\\\\
We now define an analogue of the von Mangoldt function $\Lambda$ for the ring $\ZZT$. For a non-zero ideal $\nn$ of $\ZZT$, we set
$$
\Lambda(\nn)=
\begin{cases}
\log(\Norm(\pp)) & \text{if }\nn = \pp^k\text{ for some prime ideal }\pp\text{ and integer }k\geq 1\\
0 & \text{otherwise.}
\end{cases}
$$
Hence $\Lambda$ is supported on powers of prime ideals.
\\\\
Given a sequence of complex numbers $\{a_{\nn}\}_{\nn}$ indexed by non-zero ideals in $\ZZT$, a good estimate for the sum of $a_{\nn}$ over prime ideals $\pp$ of norm $\Norm(\pp)\leq X$ can usually be derived from a good estimate of the ``smoother'' weighted sum
$$S(X):=\sum_{\Norm(\nn)\leq X}a_{\nn}\Lambda(\nn).$$
The idea in \cite{FIMR} (and even earlier in \cite{FI1}), is to bound $S(X)$ by combinations of linear and bilinear sums in $a_{\nn}$. Given a non-zero ideal $\dd$ of $\ZZT$, we define the linear sum
\begin{equation}\label{sumA}
A_{\dd}(X):= \sum_{\substack{\Norm(\nn)\leq X\\ \nn\equiv 0\bmod\dd}}a_{\nn}.
\end{equation}
Moreover, given two sequences of complex numbers $\{\alpha_{\mm}\}$ and $\{\beta_{\nn}\}$, each indexed by non-zero ideals in $\ZZT$, we define the bilinear sum
\begin{equation}\label{sumB}
B(M, N): = \sum_{\Norm(\mm)\leq M}\sum_{\Norm(\nn)\leq N}\alpha_{\mm}\beta_{\nn}a_{\mm\nn}.
\end{equation}
We consider bilinear sums where the complex numbers $\alpha_{\mm}$ and $\beta_{\nn}$ satisfy
\begin{equation}\label{ambn}
|\alpha_{\mm}|\leq \Lambda(\mm)\text{   and   }|\beta_{\nn}|\leq \tau(\nn),
\end{equation}  
where $\tau(\nn)$ denotes the number of ideals in $\ZZT$ dividing $\nn$. We now state \cite[Proposition 5.2, p.722]{FIMR} that we use to prove Theorem~\ref{mainThm}. 
\begin{prop}\label{sop}
Let $a_{\nn}$ be a sequence of complex numbers bounded by $1$ in absolute value and indexed by non-zero ideals of $\ZZT$. Suppose that there exist two real numbers $0< \theta_1, \theta_2<1$ such that: for every $\epsilon>0$, we have
\begin{equation}\tag{A}
A_{\dd}(X)\ll_{\epsilon} X^{1-\theta_1+\epsilon}
\end{equation}
uniformly for all non-zero ideals $\dd$ of $\ZZT$ and all $X\geq 2$, and
\begin{equation}\tag{B}
B(M, N)\ll_{\epsilon} (M+N)^{\theta_2}(MN)^{1-\theta_2+\epsilon}
\end{equation}
uniformly for all $M, N\geq 2$ and sequences of complex numbers $\{\alpha_{\mm}\}$ and $\{\beta_{\nn}\}$ satisfying \eqref{ambn}.
\\
Then for all $X\geq 2$ and all $\epsilon > 0$, we have the bound
$$S(X)\ll_{\epsilon}X^{1-\frac{\theta_1\theta_2}{2+\theta_2}+\epsilon}.$$  
\end{prop}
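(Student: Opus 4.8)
The plan is to run Vinogradov's method via Vaughan's identity, using hypothesis (A) for the ``type I'' sums and hypothesis (B) for the ``type II'' (bilinear) sums. The non-zero ideals of $\ZZT$ form a free commutative monoid on the prime ideals, so the formalism of Dirichlet convolution of ideal-indexed arithmetic functions is available exactly as over $\ZZ$; in particular $\mu * \mathbf{1} = \delta$ and $\Lambda * \mathbf{1} = \log\Norm$, so $\Lambda = \mu * \log\Norm$. Fix a parameter $y \ge 2$ to be chosen, split $\mu = \mu_{\le y} + \mu_{> y}$ and $\Lambda = \Lambda_{\le y} + \Lambda_{> y}$ according to whether the norm of the ideal is at most $y$, and deduce the identity
$$
\Lambda \;=\; \Lambda_{\le y} \;+\; \mu_{\le y} * \log\Norm \;-\; \mu_{\le y} * \Lambda_{\le y} * \mathbf{1} \;+\; \mu_{> y} * \mathbf{1} * \Lambda_{> y},
$$
valid on all non-zero ideals. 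Substituting it into $S(X)$ yields a decomposition $S(X) = S_0 + S_1 - S_2 + S_3$, one sum for each term, and the proof reduces to bounding these four sums.

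The terms $S_0, S_1, S_2$ are type I. Here $S_0 = \sum_{\Norm(\pp^k) \le y} a_{\pp^k}\log\Norm(\pp) \ll y$ using $|a_\nn| \le 1$ and the Chebyshev bound $\sum_{\Norm(\pp^k)\le y}\Lambda(\pp^k) \ll y$ for $\ZZT$. Opening the convolutions, $S_1 = \sum_{\Norm(\dd)\le y}\mu(\dd)\sum_{\dd\mid\nn,\,\Norm(\nn)\le X}\log(\Norm(\nn)/\Norm(\dd))\,a_\nn$; removing the logarithmic weight by partial summation, the inner sum is $\ll_\epsilon X^{1-\theta_1+\epsilon}\log X$ by (A), uniformly in $\dd$, and there are $\ll y$ admissible $\dd$, so $S_1 \ll_\epsilon y\,X^{1-\theta_1+\epsilon}\log X$. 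Likewise $S_2 = \sum_{\Norm(\dd)\le y}\sum_{\Norm(\ccc)\le y}\mu(\dd)\Lambda(\ccc)\,A_{\dd\ccc}(X)$, and since each $A_{\dd\ccc}(X) \ll_\epsilon X^{1-\theta_1+\epsilon}$ by (A) while $\sum_{\Norm(\dd)\le y}|\mu(\dd)| \ll y$ and $\sum_{\Norm(\ccc)\le y}\Lambda(\ccc)\ll y$, we get $S_2 \ll_\epsilon y^2\,X^{1-\theta_1+\epsilon}$. Note that (A) is only a bound in the length $X$, with no gain in the modulus, so the two short variables in $S_2$ cost a full factor $y^2$ — and this is exactly what produces the ``$2$'' in the final exponent.

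The term $S_3$ is the bilinear one. Grouping $\mu_{> y} * \mathbf{1} * \Lambda_{> y} = (\mu_{> y}*\mathbf{1}) * \Lambda_{> y}$, it takes the form $\sum \beta_\mm\,\Lambda_{> y}(\ccc)\,a_{\mm\ccc}$ over pairs with $\Norm(\mm\ccc)\le X$, where $\beta_\mm := (\mu_{> y}*\mathbf{1})(\mm)$ satisfies $|\beta_\mm| \le 2^{\omega(\mm)} \le \tau(\mm)$ and vanishes unless $\Norm(\mm) > y$, while $|\Lambda_{> y}(\ccc)| \le \Lambda(\ccc)$ vanishes unless $\Norm(\ccc) > y$ — precisely the structure required by \eqref{ambn}, with one $\Lambda$-bounded and one $\tau$-bounded factor. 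I would split the ranges of $\Norm(\ccc)$ and $\Norm(\mm)$ dyadically into scales $M, N > y$ with $MN \ll X$, so that $\max(M,N) < 2X/y$ and hence $M + N \ll X/y$, restore the sharp constraint $\Norm(\mm\ccc) \le X$ on the one straddling box by a standard truncation (costing a factor $\log X$), and apply (B) with the $\Lambda$-bounded sequence being $\Lambda_{> y}$ restricted to $\Norm \asymp M$ and the $\tau$-bounded sequence being $\mu_{> y}*\mathbf{1}$ restricted to $\Norm \asymp N$. Each box then contributes $\ll_\epsilon (M+N)^{\theta_2}(MN)^{1-\theta_2+\epsilon} \ll_\epsilon (X/y)^{\theta_2}X^{1-\theta_2+\epsilon}$, and summing over the $O(\log^2 X)$ boxes gives $S_3 \ll_\epsilon X^{1+\epsilon}y^{-\theta_2}$.

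Collecting the bounds and absorbing logarithms into $X^{\epsilon}$, for any $y \ge 2$ we obtain $S(X) \ll_\epsilon X^{\epsilon}\big(y^2 X^{1-\theta_1} + X y^{-\theta_2}\big)$. Choosing $y = X^{\theta_1/(2+\theta_2)}$ — which satisfies $2 \le y \le X^{1/2}$ once $X$ exceeds an absolute bound, since $\theta_1 < 1 < 2$ gives $2\theta_1 < 2+\theta_2$, the bounded remaining range of $X$ being absorbed via the trivial bound $S(X) \ll X$ — balances the two terms and yields $S(X) \ll_\epsilon X^{1 - \frac{\theta_1\theta_2}{2+\theta_2} + \epsilon}$, which is the assertion after renaming $\epsilon$. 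Most of this is routine; I expect the genuine difficulty to be twofold: truncating the straddling dyadic box back to $\Norm(\mm\ccc)\le X$ without breaking the bilinear structure needed for (B), and — more conceptually — arranging Vaughan's identity so that the surviving bilinear form carries one honestly $\Lambda$-bounded factor, since a cruder split (e.g.\ cutting $\mu * \log\Norm$ in the middle) would leave a $\tau \times \tau$ bilinear sum to which (B), as stated, does not apply.
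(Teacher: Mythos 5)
The paper does not actually prove this proposition: it is quoted verbatim from \cite[Proposition 5.2]{FIMR}, so there is no internal proof to compare against. Your reconstruction via Vaughan's identity is precisely the argument used there, and it is correct: the identity is derived correctly, the type~I terms cost $y$ and $y^2$ as you say (with the $y^2$ from the double short convolution driving the ``$2$'' in the exponent), the type~II term has both norms in $(y, X/y]$ so that $M+N\ll X/y$, and the choice $y=X^{\theta_1/(2+\theta_2)}$ gives the stated exponent. The one step you flag --- converting the hyperbola-constrained sum $\Norm(\mm\ccc)\le X$ into the rectangular form demanded by (B) --- is indeed the only delicate point; it is handled by the standard truncated Perron/Mellin separation, which replaces $\alpha_{\mm},\beta_{\nn}$ by $\alpha_{\mm}\Norm(\mm)^{-s},\beta_{\nn}\Norm(\nn)^{-s}$ with $\Real(s)=1/\log X$ (these still satisfy \eqref{ambn}) and costs only a factor $O(\log X)$ plus an admissible error, so your proof is complete once that device is inserted.
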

In other words, power-saving estimates for linear and bilinear sums imply power-saving estimates for sums supported on primes. Note that this result is now classical in the context of rational integers, thanks to the pioneering work of Vinogradov \cite{Vino}. 

\subsection{Extending governing symbols}
In light of Proposition~\ref{sop}, our current goal is to define a sequence $\{a_{\nn}\}$ indexed by non-zero ideals $\nn$ of $\ZZT$ so that if $p\equiv -1\bmod 16$ is a prime and $\pp$ is a prime ideal of $\ZZT$ lying above $p$, then $a_{\pp}$ coincides with the governing symbol $\left\langle p \right\rangle$ defined in \eqref{govp}. We first define a spin symbol $[\cdot]$ for all totally positive elements of $\ZZT$. We put
$$
[u+v\sqrt{2}]:=
\begin{cases}
\left(\frac{v}{u}\right) & \text{ if }u\text{ is odd} \\
0 & \text{ otherwise}
\end{cases}
$$
If $u+v\sqrt{2}\succ 0$ generates a prime ideal $\pp$ in $\ZZT$ lying above a prime $p\equiv -1\bmod 16$ and if $u\equiv 1\bmod 16$, then $[u+v\sqrt{2}] = \left\langle p \right\rangle$, by definition \eqref{govp}. Indeed, the condition $u\equiv 1\bmod 16$ implies that $\chi(u) = 1$ and also that $\left(\frac{-v}{u}\right) = \left(\frac{v}{u}\right)$ (note that one of $v$ and $-v$ is congruent to $1\bmod 4$). It is also convenient that exactly one of the four elements $\varepsilon^{2k}(u+v\sqrt{2}) = u_k+v_k\sqrt{2}$ ($0\leq k\leq 3$) satisfies $u_k\equiv 1\bmod 16$. Indeed, multiplying $u+v\sqrt{2}$ by $\varepsilon^2$ (resp. $\varepsilon^4$) transforms $(u, v)$ into $(3u+4v, 2u+3v)$ (resp. $(17u+24v, 12u+17v)$), and hence $u_2\equiv u_0+8\bmod 16$; one can now easily check that multiplying $u+v\sqrt{2}$ successively by $\varepsilon^2$ cycles $u\bmod 16$ through the set $\{1, 7, 9, 15\}$.
\\\\
Proposition~\ref{generalLW2} states that $[w] = [\varepsilon^8w]$ for any odd and totally positive $w\in\ZZT$, so, in light of the preceding discussion, we might naively define $a_{\nn} = \sum_{k = 0}^3[\varepsilon^{2k}w]$, where $w\succ 0$ is any totally positive generator of $\nn$. This definition does not quite suffice for our purposes because we want to isolate those $p$ that are congruent to $-1\bmod 16$ and representations $p = u^2-2v^2$ with $u\equiv 1\bmod 16$. Hence we weigh the expression above by Dirichlet characters modulo $16$. More precisely, for each pair of Dirichlet characters $\phi$ and $\psi$ modulo $16$ and totally positive $u+v\sqrt{2}$, we set
\begin{equation}\label{WGS}
[u+v\sqrt{2}]_{\phi,\psi} :=[u+v\sqrt{2}]\phi(-u^2+2v^2)\psi(u).
\end{equation}
For a non-zero ideal $\nn$ in $\ZZT$ generated by a totally positive element $w$, we set
\begin{equation}\label{Wann}
a_{\phi, \psi, \nn} := \sum_{k = 0}^3[\varepsilon^{2k} w]_{\phi, \psi}.
\end{equation}
This is still well-defined, i.e., independent of the choice of $w\succ 0$, by Proposition~\ref{generalLW2} and by \eqref{upu16}. We will apply Proposition~\ref{sop} to $8^2$ sequences $\{a_{\phi, \psi, \nn}\}_{\nn}$, one for each pair of Dirichlet characters $\phi$, $\psi$, and then add together the corresponding $8^2$ sums $S_{\phi, \psi}(X)$ to obtain Theorem~\ref{mainThm}. It is now easy to check  
\begin{lemma}\label{key}
If $p$ is a prime and $\pp$ is a prime ideal lying above $p$, then we have
$$\frac{1}{8^2}\sum_{\phi}\sum_{\psi}a_{\phi, \psi, \pp} = 
\begin{cases}
\left\langle p \right\rangle & \text{ if }p\equiv -1\bmod 16 \\
0 & \text{ otherwise}.
\end{cases}$$
\end{lemma}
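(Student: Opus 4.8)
The plan is to unwind the definitions \eqref{WGS}--\eqref{Wann} and then invoke orthogonality of Dirichlet characters modulo $16$. Fix a totally positive generator $w = u+v\sqrt{2}$ of $\pp$, and for $0\le k\le 3$ write $\varepsilon^{2k}w = u_k+v_k\sqrt{2}$. Expanding $a_{\phi,\psi,\pp}$ and interchanging the finite sums gives
$$
\frac{1}{8^2}\sum_{\phi}\sum_{\psi}a_{\phi,\psi,\pp}
= \sum_{k=0}^{3}[\varepsilon^{2k}w]\cdot\Bigl(\tfrac{1}{8}\sum_{\phi}\phi(-u_k^2+2v_k^2)\Bigr)\Bigl(\tfrac{1}{8}\sum_{\psi}\psi(u_k)\Bigr),
$$
where $\phi,\psi$ range over the $8$ Dirichlet characters modulo $16$. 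Each such character vanishes on even integers and, by orthogonality on $(\ZZ/16\ZZ)^{\times}$, one has $\tfrac{1}{8}\sum_{\phi}\phi(m) = 1$ if $m\equiv 1\bmod 16$ and $0$ for all other odd $m$. When $u_k$ is even the factor $[\varepsilon^{2k}w]$ is already $0$; when $u_k$ is odd, both $u_k$ and $-u_k^2+2v_k^2$ are coprime to $16$. Hence the right-hand side collapses to $\sum_{k}[\varepsilon^{2k}w]$, where $k$ is restricted to those indices with $u_k\equiv 1\bmod 16$ \emph{and} $-u_k^2+2v_k^2\equiv 1\bmod 16$.

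Next I would eliminate the norm congruence. Since $\varepsilon^{2} = 3+2\sqrt{2}$ has norm $1$, we have $-u_k^2+2v_k^2 = -\Norm(\varepsilon^{2k}w) = -\Norm(w)$ for every $k$, so the second condition reads $\Norm(w)\equiv -1\bmod 16$ and is independent of $k$. It remains to compare $\Norm(w)$ with $p$: if $p$ splits in $\ZZT$ then $\Norm(w) = \Norm(\pp) = p$; if $p$ is inert then $\Norm(w) = p^{2}\in\{1,9\}\bmod 16$; and if $p = 2$ then $\Norm(w) = 2$. A prime $\equiv -1\bmod 16$ is $\equiv -1\bmod 8$ and hence splits in $\ZZT$, so in all three cases $\Norm(w)\equiv -1\bmod 16$ holds precisely when $p\equiv -1\bmod 16$. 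Consequently, if $p\not\equiv -1\bmod 16$ the restricted sum is empty and $\tfrac{1}{8^2}\sum_{\phi}\sum_{\psi}a_{\phi,\psi,\pp} = 0$, the second branch of the claim.

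Finally, suppose $p\equiv -1\bmod 16$, so $p$ splits, $\Norm(w) = p$, and $u,v$ are odd. By the discussion immediately preceding the lemma, as $k$ runs through $\{0,1,2,3\}$ the residues $u_k\bmod 16$ take each value in $\{1,7,9,15\}$ exactly once; in particular there is a unique index $k_0$ with $u_{k_0}\equiv 1\bmod 16$. The restricted sum therefore reduces to the single term $[\varepsilon^{2k_0}w]$, and by the remark following \eqref{Wann} this equals $\left\langle p\right\rangle$, because $u_{k_0}\equiv 1\bmod 16$ forces $\chi(u_{k_0}) = 1$ and permits replacing $v_{k_0}$ by $-v_{k_0}$ in the Jacobi symbol. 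This establishes both branches of the formula. I do not expect any serious obstacle: the argument is a finite computation with characters modulo $16$, and every nontrivial ingredient -- the norm-invariance of multiplication by $\varepsilon^{2}$, the fact that exactly one unit-translate has $u_k\equiv 1\bmod 16$, and the identification of the corresponding spin symbol with $\left\langle p\right\rangle$ -- has already been proved above; the only care needed is the routine case analysis on the splitting of $p$ that matches $\Norm(w)\equiv -1\bmod 16$ with $p\equiv -1\bmod 16$.
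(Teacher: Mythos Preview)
Your argument is correct and is precisely the verification the paper leaves to the reader (the paper states only ``It is now easy to check'' and gives no further proof). You unwind the definitions, apply orthogonality of the eight Dirichlet characters modulo $16$, reduce the norm congruence to $p\equiv -1\bmod 16$ via the case split on the splitting type of $p$, and then invoke the two facts established just before \eqref{WGS}: that exactly one $u_k$ among $k\in\{0,1,2,3\}$ is $\equiv 1\bmod 16$, and that for such $k$ one has $[\varepsilon^{2k}w]=\langle p\rangle$. One cosmetic point: the identification $[u+v\sqrt{2}]=\langle p\rangle$ when $u\equiv 1\bmod 16$ is stated in the paragraph \emph{preceding} \eqref{WGS}, not following \eqref{Wann}.
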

Hence, to prove Theorem~\ref{mainThm}, it now suffices to prove
\begin{theorem}\label{mainThm2}
Let $a_{\phi, \psi, \nn}$ be defined as in \eqref{Wann}. For every $\epsilon>0$, there is a constant $C_{\epsilon}>0$ depending only on $\epsilon$ such that for every $X\geq 2$, we have
$$\left|\sum_{\Norm(\nn)\leq X}a_{\phi, \psi, \nn}\Lambda(\nn)\right| \leq C_{\epsilon}X^{\frac{149}{150}+\epsilon}.$$
\end{theorem}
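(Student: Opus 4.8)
The plan is to obtain Theorem~\ref{mainThm2} from Proposition~\ref{sop}, applied to the sequence $\{\tfrac14 a_{\phi,\psi,\nn}\}_{\nn}$, which is bounded by $1$ in absolute value because $|a_{\phi,\psi,\nn}|\le 4$. Proposition~\ref{sop} reduces everything to two inputs, uniform in all parameters: a power-saving bound $A_{\dd}(X)\ll_{\epsilon}X^{1-\theta_1+\epsilon}$ for the linear sums (hypothesis (A), for some $\theta_1\in(0,1)$) and a power-saving bound $B(M,N)\ll_{\epsilon}(M+N)^{\theta_2}(MN)^{1-\theta_2+\epsilon}$ for the bilinear sums (hypothesis (B), for some $\theta_2\in(0,1)$). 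The theorem then follows once $\theta_1$ and $\theta_2$ are pinned down so that $1-\tfrac{\theta_1\theta_2}{2+\theta_2}\le\tfrac{149}{150}$; I expect $\theta_1=\tfrac14$ to be attainable in (A), so that a modest $\theta_2>0$ from (B) suffices.

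For hypothesis (A): fix a totally positive generator $\delta$ of $\dd$ and write each $\nn$ divisible by $\dd$ as $\nn=\dd\mm$, so that a totally positive generator of $\nn$ is $\delta$ times one of $\mm$; since the spin symbol vanishes on ideals divisible by $(\sqrt{2})$, we may assume $\dd$ and $\mm$ are odd, and in particular that all the relevant integers $u$ are odd. Using the fundamental domains for the action of $\varepsilon$ on $\ZZT$ constructed in Section~\ref{Fundamentaldomains}, the sum $A_{\dd}(X)$ becomes a sum of $[u+v\sqrt{2}]_{\phi,\psi}$ over lattice points $(u,v)$ lying in a bounded union of regions in which, for each fixed $u$, the variable $v$ traverses an interval intersected with a residue class dictated by $\delta$. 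For $u$ not a perfect square, the map $v\mapsto\left(\frac{v}{u}\right)\phi(-u^2+2v^2)\psi(u)$ is a non-principal Dirichlet character to a modulus dividing $16u$ (after accounting for the congruence imposed by $\delta$), so the inner sum over $v$ is controlled by a Polya--Vinogradov estimate; for $u$ a perfect square the trivial bound is used, and the $O(X^{1/4})$ such $u$ with $u\ll\sqrt{X}$ contribute only $O(X^{3/4+\epsilon})$. Balancing this against the trivial bound $A_{\dd}(X)\ll X/\Norm(\dd)$ when $\Norm(\dd)$ is large yields (A) with $\theta_1=\tfrac14$; this is the business of Section~\ref{Linearsums}.

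For hypothesis (B), which is the technical heart of the matter, I would apply the Cauchy--Schwarz inequality to peel off the rougher coefficients, reducing to a bound --- uniform in ideals $\mm_1,\mm_2$ of norm $\le M$ --- for the diagonal-type sum $\sum_{\Norm(\nn)\le N}a_{\phi,\psi,\mm_1\nn}\,\overline{a_{\phi,\psi,\mm_2\nn}}$, which is then summed against (bounds for) $\Lambda(\mm_1)\Lambda(\mm_2)$. Expanding the two factors through \eqref{Wann} turns the inner sum into a bounded number of sums of products $[\varepsilon^{2k_1}\mu_1\nu]_{\phi,\psi}\,\overline{[\varepsilon^{2k_2}\mu_2\nu]_{\phi,\psi}}$ over totally positive generators $\nu$ of $\nn$ ranging through the fundamental-domain lattice points. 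The crucial and delicate step is a reciprocity (``spin-flip'') identity for the spin symbol: one must rewrite each such product, on that range of $\nu$, as a bounded factor (constant on the residue classes of $\nu$ to a bounded modulus and depending only on $\mu_1,\mu_2,k_1,k_2$) times a single Jacobi symbol which, as a function of $\nu$, is a quadratic character of conductor bounded in terms of $\Norm(\mm_1\mm_2)$, and which is principal only on an explicitly describable ``diagonal'' locus of pairs $((\mm_1,k_1),(\mm_2,k_2))$. The ingredients are quadratic reciprocity in $\ZZ$ (relating $[\mu\nu]$ to $[\mu]$, $[\nu]$ and reciprocity factors, with the observation that $[\nu]\,\overline{[\nu]}=1$ whenever $[\nu]\ne 0$), Proposition~\ref{generalLW2} (used to absorb the $\varepsilon^8$-periodicity of the spin symbol uniformly, so that the averaging over $k$ and the shifts by the $\mu_i$ interact predictably with the congruence $u_2\equiv u_1\bmod 16$ of \eqref{upu16}), and the explicit product-like shape of the fundamental domains of Section~\ref{Fundamentaldomains}. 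Granting this identity, the off-diagonal $\nu$-sums are estimated by Polya--Vinogradov (in the form of a bound for incomplete sums of a quadratic character twisted by a binary quadratic form), while the diagonal is estimated trivially by the number of admissible $\nu$; summing over $\mm_1,\mm_2$ and inserting standard divisor and Chebyshev estimates for the coefficient sums then yields (B) with an explicit $\theta_2$. This is the content of Section~\ref{Bilinearsums}.

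With (A) and (B) in hand, Proposition~\ref{sop} gives $\bigl|\sum_{\Norm(\nn)\le X}a_{\phi,\psi,\nn}\Lambda(\nn)\bigr|\ll_{\epsilon}X^{1-\frac{\theta_1\theta_2}{2+\theta_2}+\epsilon}\ll_{\epsilon}X^{149/150+\epsilon}$, which is Theorem~\ref{mainThm2}. The routine parts are the Cauchy--Schwarz manoeuvre, the bookkeeping (the same argument runs verbatim for each of the $8^2$ pairs $(\phi,\psi)$), and the Polya--Vinogradov applications; the genuinely hard part --- where I expect essentially all the difficulty to concentrate --- is the reciprocity identity in the bilinear step, that is, understanding precisely enough how the averaged spin symbol $a_{\phi,\psi,\nn}$ transforms under $\nn\mapsto\mm\nn$ to expose cancellation in $\nu$, together with the attendant design of the fundamental domains and the choice of the weighting characters $\phi,\psi$ in \eqref{WGS}, all of which are calibrated to make this step go through.
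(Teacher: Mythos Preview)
Your overall architecture is right: apply Proposition~\ref{sop} and verify hypotheses (A) and (B). Your sketch for (A) is essentially the paper's argument (Proposition~\ref{propA}), though the paper obtains only $\theta_1=\tfrac16$ rather than your claimed $\tfrac14$; the discrepancy is that the congruence on $v$ imposed by $\delta$ has modulus on the order of $\Norm(\dd)$, and when you detect it by characters (or reparametrize $v$ along the progression) the effective conductor in the Polya--Vinogradov step becomes $\asymp \Norm(\dd)\cdot u$, not $16u$. Balancing against the trivial bound then gives $\theta_1=\tfrac16$.

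The genuine gap is in (B). After Cauchy--Schwarz and the factorization identity, the inner sum $R(N;\mm_1,\mm_2)=\sum_{\nu}\chi_{\mm_1,\mm_2}(\nu)$ is a \emph{two-dimensional} incomplete character sum over a region of area $\asymp N$, with $\chi$ periodic modulo an ideal of norm $W\le M^2$. Whether you invoke a Polya--Vinogradov-type bound or complete the sum (as the paper does via Lemma~\ref{keycancellation}), the off-diagonal contribution to $|B(M,N)|$ is at best of size $M^{3/2}N^{1/2}$ (and in the paper's accounting $M^{2}N^{3/4}$); at $M=N$ this is $\ge MN$, so you get \emph{no} admissible $\theta_2>0$ from Cauchy--Schwarz alone. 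Two further devices are needed, and neither appears in your plan. First, a H\"older amplification (Lemma~\ref{generalQsum2}): raise the inner sum to the sixth power and use the multiplicativity $\gamma(w,z_1)\gamma(w,z_2)=\gamma(w,z_1z_2)\mul(w)$ to replace $N$ by $N^3$ before applying the Cauchy--Schwarz bound; this produces $Q\ll (M^{11/12}N+M^{7/6}N^{3/4}+M^{4/3}N^{1/2})(MN)^\epsilon$. Second, a genuine reciprocity law for the bilinear symbol, $\gamma(w,z)\gamma(z,w)=\mul(wz)$ (Lemma~\ref{reciprocitylem}), which lets one swap $M\leftrightarrow N$ and then take the minimum of the two bounds to obtain $(M+N)^{1/12}(MN)^{11/12+\epsilon}$ (Lemma~\ref{generalQsum3}). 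What you call the ``reciprocity (spin-flip) identity'' is really the factorization $[wz]\sim[w][z]\gamma(w,z)$ (Proposition~\ref{almostMult}); that is necessary but not sufficient, and it is distinct from the $w\leftrightarrow z$ reciprocity that symmetrizes the estimate. With $\theta_1=\tfrac16$ and $\theta_2=\tfrac{1}{12}$ one gets exactly $1-\tfrac{\theta_1\theta_2}{2+\theta_2}=\tfrac{149}{150}$.
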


\section{Fundamental domains}\label{Fundamentaldomains}
In order to obtain power-saving cancellation for linear and bilinear sums as in Proposition~\ref{sop}, we will have to choose generators of $\nn$ in \eqref{Wann} carefully. The problem reduces to constructing a convenient fundamental domain for the action of $\varepsilon^2 = 3+2\sqrt{2}$ on totally positive elements of $\ZZT$. Such constructions are standard (see for instance \cite[Chapter 6]{Marcus} or \cite[Section 4]{FIMR}). For the sake of completeness and explicitness, we give a simple argument tailored to our specific needs. Let 
\begin{equation}\label{defOmega}
\Omega := \left\{(u, v)\in \RR^2: u>0, -u<\sqrt{2}v<u \right\}.
\end{equation}
Then the lattice points $(u, v)\in \Omega\cap\ZZ^2$ precisely enumerate the totally positive elements $w = u + v\sqrt{2}$. The group $\left\langle \ve^2 \right\rangle$ of totally positive units of $\ZZT$ acts on the totally positive elements of $\ZZT$ by multiplication, and this induces an action $\left\langle \ve^2 \right\rangle\times\Omega\rightarrow \Omega$ given by
$$
a+b\sqrt{2} \cdot (u, v) := (au+2bv, bu+av).
$$
Let $\DD$ be the subset of $\Omega$ defined by
\begin{equation}\label{fundDom}
\DD := \left\{(u, v)\in \RR^2: u>0, -u<2v\leq u \right\}
\end{equation}
We claim that the region $\DD$ is a fundamental domain for the action of $\varepsilon^2$ on $\Omega$ in the following sense.
\begin{lemma}\label{lemFD}
For each element $(u, v)\in \Omega\cap\ZZ^2$, there exists exactly one integer $k$ such that $\ve^{2k}\cdot(u, v)\in\DD$.
\end{lemma}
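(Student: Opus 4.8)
The plan is to reduce the two-dimensional lattice problem to a one-dimensional statement about a logarithmic coordinate on which $\ve^2$ acts by translation, and then to prove existence and uniqueness of $k$ separately. First I would introduce the embedding $w = u+v\sqrt{2}\mapsto (w, \ow) = (u+v\sqrt{2}, u-v\sqrt{2})$ of totally positive elements into the first quadrant $\{(x,y): x>0, y>0\}$ of $\RR^2$; the condition $-u<\sqrt{2}v<u$ is exactly $x>0$ and $y>0$. Multiplication by $\ve^2 = 3+2\sqrt{2}$ acts as $(x,y)\mapsto(\ve^2 x, \ove^2 y)$ where $\ove^2 = 3-2\sqrt{2} = \ve^{-2}$, so on the coordinate $t := \tfrac12\log(x/y)$ it acts by the translation $t\mapsto t + \log\ve^2$. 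Thus I would rephrase the whole lemma in terms of $t$: the region $\DD$ should correspond to a half-open interval of length exactly $\log\ve^2$ in the $t$-coordinate, and then the claim becomes the trivial fact that every real number lies in exactly one translate of a half-open interval of that length by the lattice $(\log\ve^2)\ZZ$.

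The key steps, in order, are: (i) verify that $(u,v)\in\Omega$ iff $w\succ 0$ iff $x,y>0$ (already implicit in the excerpt), and record that the map $w\mapsto t(w) = \tfrac12\log(w/\ow)$ is well-defined and real for such $w$; (ii) compute that $t(\ve^2 w) = t(w) + c$ where $c = \log(3+2\sqrt{2})>0$; (iii) show that the boundary rays of $\DD$, namely $2v = u$ and (the limiting ray) $2v = -u$, are mapped by multiplication by $\ve^2$ to each other — equivalently, check directly that if $(u,v)$ satisfies $2v=u$ then $\ve^2\cdot(u,v) = (3u+4v, 2u+3v) = (5u, 4u)$, hmm, let me instead phrase it as: the line $2v=u$ and the line $2v=-u$ inside $\Omega$ correspond to two specific values $t_0$ and $t_0 + c$ of the $t$-coordinate, so that $\DD\cap\Omega$ is exactly the preimage of the half-open interval $[t_0, t_0+c)$ (or $(t_0-c, t_0]$, with endpoints chosen to match the $\leq$ in \eqref{fundDom}); (iv) conclude by noting that $\{[t_0+kc, t_0+(k+1)c)\}_{k\in\ZZ}$ partitions $\RR$, so for each $w\succ 0$ there is exactly one $k$ with $t(w)+kc = t(\ve^{2k}w)$ in $[t_0,t_0+c)$, i.e. $\ve^{2k}\cdot(u,v)\in\DD$.

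The one subtlety I would be careful about is the handling of the closed versus open boundary: $\DD$ is defined with $-u < 2v \le u$, so it is half-open, and I need the translation picture to respect exactly which of the two boundary lines $2v=u$, $2v=-u$ is included. Concretely, $\ve^2$ maps the ray $\{2v=u, u>0\}$ into $\Omega$, and I should check it lands on the ray $\{2v = -u\}$ of the ``next'' copy, or more precisely track where the image of a point on $2v=u$ sits; since multiplication by $\ve^2$ is a bijection of $\Omega\cap\ZZ^2$ preserving the lattice, it suffices to verify the boundary bookkeeping once. This is the only place a sign or an inequality direction could go wrong, and it is the step I would write out most carefully; everything else is the standard ``a group acting by a single translation on $\RR$ has the obvious fundamental domain'' argument. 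Note also that $\Omega$ here is a genuinely two-sided cone (not just a half-line), so $\DD$ as defined is a sub-cone of half the angular width, and one checks $\ve^2\cdot\DD$ is the adjacent sub-cone; iterating tiles all of $\Omega$, which is precisely the content of Lemma~\ref{lemFD}.
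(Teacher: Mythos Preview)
Your proposal is correct and is essentially the paper's own argument, just passed through a logarithm: the paper works directly with the ratio $r(w)=w/\ow$, observes that $\ve^{2k}$ multiplies $r$ by $\ve^{4k}$, and checks that $(u,v)\in\DD$ iff $\ve^{-2}<r(w)\le\ve^2$, which is exactly your half-open interval condition on $t=\tfrac12\log r$. (Your tentative claim in step (iii) that $\ve^2$ swaps the two boundary rays is not literally true---it sends $\{2v=u\}$ to the far boundary of $\ve^2\cdot\DD$, not back to $\{2v=-u\}$---but you correctly abandon it for the clean $t$-coordinate picture, which handles the half-open bookkeeping without that check.)
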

\begin{proof}
Since $\ve^2 = 3+2\sqrt{2}>1$, we have that $\ve^{2k}>\ve^{2j}$ whenever $k>j$, that $\ve^{2k}\rightarrow 0$ as $k\rightarrow -\infty$, and that $\ve^{2k}\rightarrow \infty$ as $k\rightarrow \infty$. Moreover, given $(u, v)\in \Omega\cap\ZZ^2$, we have
$$
\frac{\ve^2(u+v\sqrt{2})}{\ve^{-2}(u-v\sqrt{2})} = \ve^4\cdot\frac{u+v\sqrt{2}}{u-v\sqrt{2}}.
$$
Hence, given $(u, v)\in \Omega\cap\ZZ^2$, there exists a unique integer $k$ such that
$$
\ve^{-2}<\frac{\ve^{2k}(u+v\sqrt{2})}{\ve^{-2k}(u-v\sqrt{2})}\leq \ve^{2}.
$$
The lemma follows upon noticing that for $(u, v)\in\Omega$, we have $(u, v)\in \DD$ if and only if $\ve^{-2}<(u+v\sqrt{2})/(u-v\sqrt{2})\leq \ve^{2}$.
\end{proof}
An immediate consequence of Lemma~\ref{lemFD} is the following proposition.
\begin{prop}\label{propFD}
Suppose that $\nn$ is a non-zero ideal of $\ZZT$. Then $\nn$ has a unique generator in $\DD$.
\end{prop}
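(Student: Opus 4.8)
The plan is to deduce this directly from Lemma~\ref{lemFD} together with an explicit description of the set of all generators of $\nn$.

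The first step is to record the inclusion $\DD\subseteq\Omega$. Indeed, if $(u,v)$ satisfies $u>0$ and $-u<2v\leq u$, then $|v|\leq u/2<u/\sqrt{2}$, so $-u<\sqrt{2}v<u$. Since $\Omega\cap\ZZ^2$ parametrizes exactly the totally positive elements of $\ZZT$, every generator of $\nn$ whose coordinate pair lies in $\DD$ is automatically totally positive. Hence it suffices to show that $\nn$ has exactly one totally positive generator with coordinate pair in $\DD$.

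The second step is to identify the totally positive generators of $\nn$. Since $\ZZT$ is a PID, we may write $\nn=(w_0)$ for some totally positive $w_0$, as recalled at the beginning of Section~\ref{Sumsoverprimes}; every generator of $\nn$ then has the form $\pm\varepsilon^{k}w_0$ with $k\in\ZZ$. Because $\Norm(\varepsilon)=-1$, the element $\pm\varepsilon^{k}w_0$ has negative norm when $k$ is odd, hence is not totally positive. When $k=2j$ is even, $\varepsilon^{2j}$ is totally positive, and a product of two totally positive elements is totally positive: if $x=u+v\sqrt{2}$ is totally positive then $u>\sqrt{2}\,|v|$, so for a second such element $y=u'+v'\sqrt{2}$ the norm $\Norm(xy)=\Norm(x)\Norm(y)$ and the first coordinate $uu'+2vv'\geq uu'-(\sqrt{2}|v|)(\sqrt{2}|v'|)>0$ are both positive. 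Thus $\varepsilon^{2j}w_0$ is totally positive while $-\varepsilon^{2j}w_0$ has negative first coordinate and is not. Therefore the totally positive generators of $\nn$ are exactly the elements $\varepsilon^{2j}w_0$, $j\in\ZZ$.

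The final step is to apply Lemma~\ref{lemFD} to the lattice point $(u_0,v_0)\in\Omega\cap\ZZ^2$ corresponding to $w_0$: there is exactly one $j\in\ZZ$ with $\varepsilon^{2j}\cdot(u_0,v_0)\in\DD$. Since the action of $\langle\varepsilon^2\rangle$ on $\Omega$ is the one induced by multiplication in $\ZZT$, the pair $\varepsilon^{2j}\cdot(u_0,v_0)$ is precisely the coordinate pair of $\varepsilon^{2j}w_0$. Combining the three steps yields that $\nn$ has exactly one generator in $\DD$. There is no genuine obstacle here: the only points to verify are the inclusion $\DD\subseteq\Omega$ and that negating a totally positive element, or multiplying it by an odd power of $\varepsilon$, leaves the totally positive cone — so the real content of the proposition is carried entirely by Lemma~\ref{lemFD}.
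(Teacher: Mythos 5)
Your proof is correct and follows exactly the route the paper intends: the paper states Proposition~\ref{propFD} as an "immediate consequence" of Lemma~\ref{lemFD} without writing out the details, and you have simply supplied them — verifying $\DD\subseteq\Omega$, identifying the totally positive generators of $\nn$ as $\{\varepsilon^{2j}w_0\}_{j\in\ZZ}$ using $\Norm(\varepsilon)=-1$ and the description of $\ZZT^\times$, and then invoking Lemma~\ref{lemFD}.
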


\subsection{Geometry of numbers in the fundamental domain: the Lipschitz principle}\label{lipschitz}
We now briefly turn to the problem of counting lattice points and boxes inside certain compact subsets of the fundamental domain $\DD$. We state a lemma of Davenport (see \cite{Dav} and \cite{DavC}).
\\\\
Let $\RRR$ be a compact, Lebesgue measurable subset of $\RR^n$. Suppose that $\RRR$ satisfies the following two conditions:
\begin{enumerate}
	\item Any line parallel to one of the $n$ coordinate axes intersects $\RRR$ in a set of points which, if not empty, consists of at most $h$ intervals, and
	\item The same is true (with $m$ in place of $n$) for any of the $m$-dimensional
regions obtained by projecting $\RRR$ on one of the coordinate spaces defined by
equating a selection of $n-m$ of the coordinates to zero; and this condition is
satisfied for all $m$ from $1$ to $n-1$.
\end{enumerate}
\begin{lemma}[Davenport]\label{dav}
If $\RRR$ satisfies conditions (1) and (2) above, then
$$
\left|\RRR\cap\ZZ^n - \Vol(\RRR)\right|\leq \sum_{m = 0}^{n-1}h^{n-m}V_m
$$
where $V_m$ is the sum of the $m$-dimensional volumes of the projections of $\RRR$ on
the various coordinate spaces obtained by equating any $n-m$ coordinates to zero,
and $V_0=1$ by convention.
\end{lemma}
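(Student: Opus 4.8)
Here is how I would approach the proof.

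The plan is to argue by induction on the dimension $n$, slicing $\RRR$ by the hyperplanes $x_n = k$ with $k\in\ZZ$. The base case $n=1$ is immediate: by condition (1), $\RRR$ is a union of at most $h$ closed intervals, the number of integers in a closed interval $[a,b]$ differs from $b-a$ by at most $1$, and summing over the intervals gives $|\#(\RRR\cap\ZZ) - \Vol(\RRR)| \leq h = h^{1}V_0$ since $V_0 = 1$.

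For the inductive step, set $\RRR_k = \{\mathbf{x}'\in\RR^{n-1} : (\mathbf{x}',k)\in\RRR\}$. The first point to check is that every nonempty slice $\RRR_k$ again satisfies conditions (1) and (2) with the same constant $h$: a coordinate-parallel line in $\RR^{n-1}$ lifts to a coordinate-parallel line of $\RR^n$ lying in $\{x_n = k\}$, and for $S\subseteq\{1,\ldots,n-1\}$ one has $\pi_S(\RRR_k) = \mathrm{slice}_{x_n=k}\,\pi_{S\cup\{n\}}(\RRR)$, a slice of a coordinate projection of $\RRR$, to which condition (2) applies (or condition (1), when $S\cup\{n\}$ is the full index set). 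Hence the inductive hypothesis bounds $|\#(\RRR_k\cap\ZZ^{n-1}) - \Vol(\RRR_k)|$ by $\sum_{m=0}^{n-2} h^{n-1-m} V_m(\RRR_k)$, where $V_m(\RRR_k)$ is the analogue of $V_m$ built from the coordinate projections of $\RRR_k$. Since $\#(\RRR\cap\ZZ^n) = \sum_k \#(\RRR_k\cap\ZZ^{n-1})$, after summing over $k$ it remains to bound $\sum_k V_m(\RRR_k)$ in terms of the $V_j(\RRR)$ and to compare $\sum_k \Vol(\RRR_k)$ with $\Vol(\RRR)$.

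Both of these rest on one elementary observation: if $P$ is a compact set in some $\RR^{r+1}$ whose fibres over the first $r$ coordinates are unions of at most $h$ intervals (which holds whenever $P$ satisfies condition (1)), then by Tonelli's theorem $\sum_{k\in\ZZ}\Vol(\mathrm{slice}_{x_{r+1}=k}P) = \int \#\{k : (\mathbf{y},k)\in P\}\,d\mathbf{y}$ and $\Vol(P) = \int \ell(\mathbf{y})\,d\mathbf{y}$ with $\ell(\mathbf{y})$ the fibre length, so the two differ by at most $h\cdot\Vol(\pi(P))$, where $\pi$ drops the last coordinate. Applying this with $P = \RRR$ yields $|\sum_k\Vol(\RRR_k) - \Vol(\RRR)| \leq h\,V_{n-1}^{\flat}(\RRR)$, where I write $V_m^{\flat}(\RRR)$ (resp. $V_m^{\sharp}(\RRR)$) for the part of $V_m(\RRR)$ coming from coordinate subsets that avoid (resp. contain) the index $n$, so that $V_m(\RRR) = V_m^{\flat}(\RRR) + V_m^{\sharp}(\RRR)$. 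Applying it instead with $P = \pi_{S\cup\{n\}}(\RRR)$ for each $m$-element $S\subseteq\{1,\ldots,n-1\}$, and again using $\pi_S(\RRR_k) = \mathrm{slice}_{x_n=k}\,\pi_{S\cup\{n\}}(\RRR)$, yields $\sum_k V_m(\RRR_k) \leq V_{m+1}^{\sharp}(\RRR) + h\,V_m^{\flat}(\RRR)$. Feeding these two estimates into the bound from the previous paragraph and reindexing ($j = m+1$ in the $V^{\sharp}$ terms), every contribution appears with coefficient exactly $1$, and one gets $|\#(\RRR\cap\ZZ^n) - \Vol(\RRR)| \leq \sum_{m=0}^{n-1} h^{n-m}\bigl(V_m^{\sharp}(\RRR) + V_m^{\flat}(\RRR)\bigr) = \sum_{m=0}^{n-1} h^{n-m} V_m(\RRR)$, as required.

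The step I expect to demand the most care is precisely this last piece of bookkeeping: one must split each $V_m(\RRR)$ according to whether the defining coordinate set contains the final index, and then pair the ``fibre length'' halves of the slice estimates with the $V_m^{\flat}$ terms and the ``integer count'' halves with the $V_{m+1}^{\sharp}$ terms, so that the coefficients telescope to $1$ rather than to something like $2$ — this is exactly the sort of step that the original argument had to revisit in its corrigendum. Everything else is routine: the compactness of $\RRR$ makes all the slices and projections compact and hence measurable with finite volume, the interchanges of sum and integral are justified by Tonelli's theorem, and condition (2) enters only to guarantee that the slices and projections arising in the induction again satisfy (1) and (2), which is what keeps the induction going.
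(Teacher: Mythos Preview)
The paper does not actually prove this lemma: it merely states it and cites Davenport's original note and its corrigendum (\cite{Dav}, \cite{DavC}). So there is no ``paper's own proof'' to compare against; the relevant benchmark is Davenport's argument itself.

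Your approach is precisely Davenport's: induct on $n$ by slicing along $x_n = k$, verify that each slice and each needed projection again satisfies (1)--(2) with the same $h$, and control the two error sources (the inductive error on each slice, and the discrepancy between $\sum_k \Vol(\RRR_k)$ and $\Vol(\RRR)$) via the elementary fibrewise estimate $|\#\{k:(\mathbf{y},k)\in P\} - \ell(\mathbf{y})| \le h$. Your bookkeeping via the $\flat/\sharp$ decomposition of $V_m$ is correct and yields the stated bound with coefficient exactly $1$; this is indeed the point Davenport had to repair in his corrigendum, and you have handled it cleanly. The proof is correct.
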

We will apply Lemma~\ref{dav} to the fundamental domain $\DD\subset\RR^2$ as well as certain variations thereof.
\\\\
Let $k\geq 0$ be an integer, and define
$$
\DD_k = \DD\cup\ve^2\cdot\DD\cdots\cup\ve^{2k}\cdot\DD.
$$
Let $X>0$. Then the region 
\begin{equation}\label{DX}
\DD_k(X) := \{(u, v)\in\DD_k: u^2-2v^2\leq X \}
\end{equation}
is a compact subset of $\RR^2$ and satisfies conditions (1) and (2) above with $h = 2$. Moreover, one can check that there exist positive real numbers $a_k$ and $\ell_k$ such that
\begin{equation}\label{defao}
\Vol(\DD_k(X)) = a_kX\ \ \ \ \ \ \ \text{ and }\ \ \ \ \ \ \ \ \Vol(\partial(\DD_k(X))) = \ell_kX^{\frac{1}{2}}.
\end{equation}
Now let $L:\RR^2\rightarrow\RR^2$ be an invertible linear transformation of the form  
$$
L\left( \begin{array}{c}
x\\
y\end{array} \right):=
\left( \begin{array}{cc}
a & b\\
c & d\end{array} \right)
\left( \begin{array}{c}
x\\
y\end{array} \right)+
\left( \begin{array}{c}
x_0\\
y_0\end{array} \right),
$$
of determinant $D:=ad-bc\neq 0$. Then $L(\DD_k(X))$ is a compact subset of $\RR^2$ that also satisfies conditions (1) and (2) above, also with $h = 2$. We define the diameter of $L$ to be $\diam(L) := |a|+|b|+|c|+|d|$. Then
$$
\Vol(L(\DD_k(X))) = |D|\Vol(\DD_k(X))
$$
and
$$
\Vol(\partial(L(\DD_k(X)))) = O(\diam(L)\cdot X^{\frac{1}{2}}),
$$
where the implied constant is absolute.

\section{Linear sums}\label{Linearsums}
In this section we prove that the estimate (A) from Proposition~\ref{sop} holds for the sequence $\{a_{\phi, \psi, \nn}\}_{\nn}$ defined in \eqref{Wann} with $\theta_1 = 1/6$.
\begin{prop}\label{propA}
Let $a_{\nn} = a_{\phi, \psi, \nn}$, where $a_{\phi, \psi, \nn}$ is defined as in \eqref{Wann}, and let $A_{\dd}(X)$ be defined as in \eqref{sumA}. Then for all $\epsilon>0$ and all $X\geq 2$, we have
$$A_{\dd}(X)\ll_{\epsilon}X^{\frac{5}{6}+\epsilon}.$$ 
\end{prop}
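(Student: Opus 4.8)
The strategy is to unfold $A_{\dd}(X)$ into a sum of the spin symbol over lattice points in the fundamental domain $\DD$, and then to split the estimate according to the size of $\Norm(\dd)$: a trivial bound from the Lipschitz principle handles large $\Norm(\dd)$, while a P\'olya--Vinogradov estimate produces the necessary cancellation for small $\Norm(\dd)$. The threshold $\Norm(\dd)\asymp X^{1/6}$ is exactly what balances the two bounds and yields the exponent $5/6$ (that is, $\theta_1=1/6$).

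\emph{Unfolding.} Write $a_{\nn}=a_{\phi,\psi,\nn}$. By Proposition~\ref{propFD} every nonzero ideal $\nn$ of $\ZZT$ has a unique generator $w_{\nn}\in\DD$, and then \eqref{Wann} reads $a_{\nn}=\sum_{k=0}^{3}[\ve^{2k}w_{\nn}]_{\phi,\psi}$. As $\nn$ ranges over ideals of norm at most $X$ and $k$ over $\{0,1,2,3\}$, Lemma~\ref{lemFD} shows that the elements $\ve^{2k}w_{\nn}$ range exactly once over the lattice points of the region $\DD_3(X)$ of \eqref{DX}, and $\dd\mid\nn$ holds if and only if $\dd$ divides the ideal generated by $\ve^{2k}w_{\nn}$. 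Recalling \eqref{WGS} we get
\[
A_{\dd}(X)=\sum_{\substack{x=u+v\sqrt2\,\in\,\DD_3(X)\cap\ZZ^2\\ \dd\,\mid\,(x)}}\left(\frac{v}{u}\right)\phi(2v^2-u^2)\,\psi(u),
\]
a term vanishing unless $u$ is odd. The divisibility $\dd\mid(x)$ cuts out a sublattice of $\ZZ^2$ of index $\Norm(\dd)$, and the characters $\phi,\psi$ together with the parity of $u$ impose a fixed, bounded weighting supported on finitely many residue classes modulo $16$ (each with $u$ odd); hence it suffices to bound, for each such class, the sum $T(X,\dd)$ of $\left(\frac{v}{u}\right)$ over $(u,v)\in\DD_3(X)$ lying in a fixed coset of a sublattice $\Lambda\subseteq\ZZ^2$ of index $\asymp\Norm(\dd)$.

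\emph{Large $\Norm(\dd)$.} If $\Norm(\dd)\ge X^{1/6}$, bound $T(X,\dd)$ by the number of coset points in $\DD_3(X)$. Take a Minkowski-reduced basis of $\Lambda$ and let $L_0$ be the integral matrix formed by it, so $|\det L_0|=[\ZZ^2:\Lambda]\asymp\Norm(\dd)$; since $\Lambda\subseteq\ZZ^2$ its successive minima are $\ge1$ with product $\asymp\Norm(\dd)$, so the reduced basis vectors have entries $O(\Norm(\dd))$ and hence $L_0^{-1}=(\det L_0)^{-1}\mathrm{adj}(L_0)$ has $\diam(L_0^{-1})=O(1)$. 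Applying Lemma~\ref{dav} to the image of $\DD_3(X)$ under the affine map built from $L_0^{-1}$ and the translation vector of the coset, and using \eqref{defao}, the count is $\Vol(\DD_3(X))/[\ZZ^2:\Lambda]+O(X^{1/2})\ll X/\Norm(\dd)+X^{1/2}\ll X^{5/6}$.

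\emph{Small $\Norm(\dd)$.} If $\Norm(\dd)<X^{1/6}$, extract cancellation from $\left(\frac{\cdot}{u}\right)$. In $\DD_3(X)$ the coordinate $u$ is bounded, $u\ll X^{1/2}$, and for fixed odd $u$ the remaining $v$ ranges over $O(1)$ intervals of length $\ll u$ intersected with an arithmetic progression of modulus $q_u$ dividing $16\,\Norm(\dd)$. Since $v\mapsto\left(\frac{v}{u}\right)$ is a real Dirichlet character modulo $u$, non-principal precisely when $u$ is not a perfect square, expanding the indicator of the progression into additive characters and applying a P\'olya--Vinogradov-type bound gives, for non-square odd $u$, an estimate $O((u\,\Norm(\dd))^{1/2}X^{\epsilon})$ for the inner $v$-sum; summing over $u\ll X^{1/2}$ yields $\ll \Norm(\dd)^{1/2}X^{3/4+\epsilon}\ll X^{1/12}X^{3/4+\epsilon}=X^{5/6+\epsilon}$. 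For $u=t^2$ with $t\ll X^{1/4}$ there is no cancellation, but the $v$-sum has $O(X^{1/2})$ terms, contributing $\ll X^{1/4}\cdot X^{1/2}=X^{3/4}$ overall. Combining the two ranges and summing over the $O(1)$ residue classes modulo $16$ gives $A_{\dd}(X)\ll_{\epsilon}X^{5/6+\epsilon}$.

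\emph{Main obstacle.} The delicate step is the trivial estimate in the large-$\Norm(\dd)$ range: the error from Lemma~\ref{dav} must stay $O(X^{1/2})$ \emph{uniformly in $\dd$}, whereas a careless choice of basis for $\Lambda$ (e.g.\ a sheared one) transforms $\DD_3(X)$ into a region whose coordinate projections have length a large power of $\Norm(\dd)$ times $X^{1/2}$, destroying the bound. Passing to a reduced basis keeps $\diam(L_0^{-1})=O(1)$, and this is exactly where the fact that $\Lambda$ is a sublattice of $\ZZ^2$ (so its first successive minimum is $\ge1$) is used. A secondary nuisance is the bookkeeping for the degenerate characters on the cancellation side — perfect-square $u$, and the finitely many additive frequencies for which the twisted character sum lacks square-root cancellation — but these are readily absorbed into the stated bound.
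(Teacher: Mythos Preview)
Your strategy matches the paper's: split at $\Norm(\dd)\asymp X^{1/6}$, count lattice points via Lemma~\ref{dav} for large $\Norm(\dd)$, and apply P\'olya--Vinogradov for small $\Norm(\dd)$. The reduced-basis argument for the trivial bound is a clean variant of the paper's explicit generator $d_1+d_2\sqrt2\in\DD$; both give what is needed.

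There is, however, a genuine gap in the cancellation step. You assert that for every non-square odd $u$ the inner $v$-sum over the arithmetic progression is $O\bigl((u\,\Norm(\dd))^{1/2}X^{\epsilon}\bigr)$. This fails whenever the conductor $u^*$ of the Jacobi character $v\mapsto(v/u)$ (the product of primes dividing $u$ to odd exponent) divides the progression modulus $q_u$: on the progression $v\equiv a\pmod{q_u}$ the symbol $(v/u)$ is then \emph{constant}, equal to $(a/u^*)$, so the sum has size $\asymp X^{1/2}/q_u$, which for small $u$ and small $\Norm(\dd)$ can vastly exceed $(u\,\Norm(\dd))^{1/2}$. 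Concretely, with $u=q_u=p$ an odd prime one has $\sum_{v\in I,\ v\equiv a\,(p)}(v/p)=(a/p)\cdot|I|/p+O(1)$, with no cancellation. Your aside about ``finitely many additive frequencies for which the twisted character sum lacks square-root cancellation'' does not cover this: for such $u$, \emph{every} nonzero frequency is bad (for $u=q=p$, each $S_h=\sum_{v\in I}(v/p)e(hv/p)$ with $h\not\equiv0$ has magnitude $\asymp|I|/\sqrt p$, not $\sqrt p$).

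The paper sidesteps this by detecting the progression with \emph{multiplicative} characters $\chi\bmod d_u$ (where $d_u\mid 16\,\Norm(\dd)$), so that $v'\mapsto\chi(v')\bigl(\tfrac{v'}{u}\bigr)$ is a genuine Dirichlet character of conductor at most $16u\,\Norm(\dd)$; P\'olya--Vinogradov then applies unless this combined character is principal, which forces $u=fg^2$ with squarefree $f\mid d_u$. The exceptional $u\le X^{1/2}$ number at most $\tau(16\,\Norm(\dd))\,X^{1/4}\ll_\epsilon\Norm(\dd)^{\epsilon}X^{1/4}$, each contributing $\ll X^{1/2}$ trivially, for a total $\ll_\epsilon X^{3/4+\epsilon}$. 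So the fix is local and the final exponent survives, but your stated dichotomy (square versus non-square $u$) is incorrect and must be replaced by the degeneracy condition $u=fg^2$ with $f\mid q_u$.
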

\begin{proof}
Recall that
$$A_{\dd}(X) = \sum_{\substack{\Norm(\nn)\leq X\\ \nn\equiv 0\bmod\dd}}a_{\nn}.$$
Since the sequence $a_{\nn}$ is supported on odd ideals $\nn$, we see that $A_{\dd}(X) = 0$ unless $\dd$ is odd. Hence we may assume without loss of generality that $\dd$ is an odd ideal. Let 
\begin{equation}\label{reg}
\RRR(X) := \DD_4(X) = \left\{(u, v)\in \DD\cup\varepsilon^2\DD\cup\varepsilon^4\DD\cup\varepsilon^6\DD: u^2-2v^2\leq X\right\}.
\end{equation}
By Proposition~\ref{propFD} and definition \eqref{Wann}, we have
$$
A_{\dd}(X) = \sum_{\substack{(u, v)\in\RRR(X)\\ u+v\sqrt{2}\equiv 0\bmod \dd}}[u+v\sqrt{2}]_{\phi, \psi},
$$
where $[u+v\sqrt{2}]_{\phi, \psi}$ is defined as in \eqref{WGS}.
\\\\
We now reformulate the congruence condition $u+v\sqrt{2}\equiv 0\bmod \dd$. Proposition~\ref{propFD} implies that there is an element $d_1+d_2\sqrt{2}\in\DD$ which generates $\dd$. Then the congruence above is equivalent to saying that there exist integers $e_1$ and $e_2$ such that $u+v\sqrt{2} = (d_1+d_2\sqrt{2})(e_1+e_2\sqrt{2})$, i.e., such that
$$u  = d_1e_1 + 2d_2e_2$$
and
$$v = d_2e_1 + d_1e_2.$$
In other words, $(u, v)$ is in the image of the linear transformation 
$$
L_{\dd} :=
\left( \begin{array}{cc}
d_1 & 2d_2\\
d_2 & d_1\end{array} \right): \ZZ^2\rightarrow \ZZ^2
$$
of determinant $D:=\Norm(\dd) = d_1^2-2d_2^2$. Hence we define
$$
\RRR(\dd, X) := \left\{(u, v)\in \RRR(X): (u, v)\in\text{Image}(L_{\dd})\right\},
$$
\begin{center}
\begin{figure}[h]
\includegraphics[scale = 0.55]{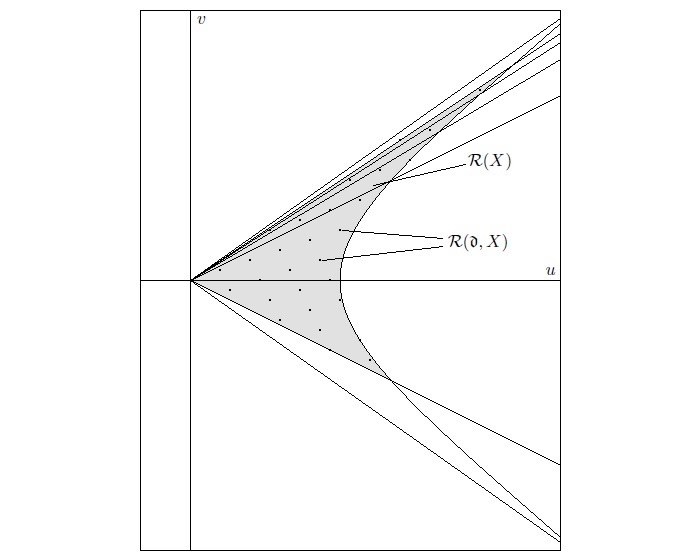}
\caption{The region $\RRR(X)$ and the lattice points $\RRR(\dd, X)$}
\label{fig:2}
\end{figure}
\end{center}
and we rewrite the sum $A_{\dd}(X)$ as
$$
A_{\dd}(X) = \sum_{(u, v)\in\RRR(\dd, X)}[u+v\sqrt{2}]_{\phi, \psi}.
$$
Using the fact that $|[u+v\sqrt{2}]_{\phi, \psi}|\leq 1$, we obtain the trivial bound 
\begin{equation}\label{boundTriv0}
\left|A_{\dd}(X)\right| \leq \sum_{(u, v)\in\RRR(\dd, X)} 1 = \sum_{L_{\dd}^{-1}\RRR(X)\cap\ZZ^2}1.
\end{equation}
Since $d_1+d_2\sqrt{2}\in\DD$, we have the inequalities
$$
\frac{d_1^2}{2}\leq D \leq d_1^2,
$$
which implies that $\diam(L_{\dd}^{-1}) \ll D^{-1/2}$. Hence Lemma~\ref{dav} gives
\begin{equation}\label{boundTriv1}
\left|A_{\dd}(X)\right| \leq a_4XD^{-1} + O(D^{-\frac{1}{2}}X^{\frac{1}{2}}+1) \ll XD^{-1} + X^{\frac{1}{2}}D^{-\frac{1}{2}} + 1,
\end{equation}
where the implied constant is absolute. This estimate will be useful when $D$ is large compared to $X$.
\\\\
Next we split the sum $A_{\dd}(X)$ into $8\cdot 16$ sums where the congruence classes of $u$ and $v$ modulo $16$ are fixed, say $u\equiv u_0\bmod 16$ and $v\equiv v_0\bmod 16$ for some congruence classes $u_0$ and $v_0$ modulo $16$ with $u_0$ invertible modulo $16$. For $u$ and $v$ satisfying these congruences, we have 
$$[u+v\sqrt{2}]_{\phi, \psi} = \delta(u_0, v_0)\left(\frac{v}{u}\right),$$
where $\delta(u_0, v_0)\in\{\pm 1\}$ depends only on the congruence classes $u_0$ and $v_0$ modulo $16$. Hence it remains to give estimates for sums of the type
$$A_{\dd}(u_0, v_0, X) := \sum_{(u, v)\in\RRR(u_0, v_0, \dd, X)}\left(\frac{v}{u}\right),$$
where
$$\RRR(u_0, v_0, \dd, X) := \left\{(u, v)\in \RRR(\dd, X): (u, v)\equiv(u_0, v_0)\bmod 16\right\}.$$
Splitting the sum according to the value of $u$, we obtain 
\begin{equation}\label{intermed1}
A_{\dd}(u_0, v_0, X) = \sum_{\substack{0\leq u\leq R_1(X)\\ u\equiv u_0\bmod 16}}A_{u, \dd}(v_0, X),
\end{equation}
where
$$A_{u, \dd}(v_0, X) := \sum_{\substack{v\in I_u\\ (u, v)\in L_{\dd}(\ZZ^2)  \\v\equiv v_0\bmod 16}}\left(\frac{v}{u}\right).$$
Here
$$
R_1(X) = \sup\{u\in\RR:\ (u, v)\in\RRR(X)\} \ll X^{\frac{1}{2}}
$$
and $I_u$ is an interval (or a union of 2 disjoint intervals) of size $\leq 2R_2(X)$, where
$$
R_2(X) = \sup\{|v|\in\RR:\ (u, v)\in\RRR(X)\} \ll X^{\frac{1}{2}}.
$$
We now unwind the condition $(u, v)\in L_{\dd}(\ZZ^2)$, i.e., that $(u, v)$ is in the image of $L_{\dd}$. Consider the system of equations in $x$ and $y$:
\begin{equation}\label{sysxy}
\begin{cases}
u = d_1x+2d_2y \\
v = d_2x+d_1y.
\end{cases}
\end{equation}
Let $d := \gcd(d_1, d_2)$ and write $d_1 = dd_1'$, $d_2 = dd_2'$. Recall that $\dd$ and so also $d_1$ is odd, so that $d = \gcd(d_1, 2d_2)$. If the system \eqref{sysxy} has a solution over $\ZZ$, then $d$ must divide $u$. This means that
$$A_{\dd}(u_0, v_0, X) = \sum_{\substack{0\leq u\leq R_1(X)\\ u\equiv u_0\bmod 16\\ u\equiv 0\bmod d}}A_{u, \dd}(v_0, X).$$
Now suppose $u\equiv 0\bmod d$, and let $x_u, y_u\in\ZZ$ be such that 
$$u = d_1x_u+2d_2y_u.$$
Then all solutions $(x, y)\in\ZZ^2$ to the first equation in \eqref{sysxy} are given by 
$$(x, y) = (x_u-2d_2'k, y_u+d_1'k), \ \ \ k\in\ZZ.$$ 
Hence 
$$v = d_2\left(x_u-2d_2'k\right)+d_1\left(y_u+d_1'k\right) = d_2x_u+d_1y_u + Dk/d,$$
which means that \eqref{sysxy} has a solution over $\ZZ$ if and only if
$$v \equiv d_2x_u+d_1y_u\bmod D/d.$$
Note that $D$ is odd, so that $D/d$ and $16$ are coprime. Let $v_u$ be the congruence class modulo $16D/d$ such that
$$
\begin{cases}
v_u \equiv d_2x_u+d_1y_u \bmod D/d \\
v_u \equiv v_0 \bmod 16.
\end{cases}
$$
Thus we have proved that if $u\equiv 0\bmod d$, then
$$
A_{u, \dd}(v_0, X) = \sum_{\substack{v\in I_u\\ v\equiv v_u\bmod 16D/d}}\left(\frac{v}{u}\right).
$$ 
Let $e_u = \gcd(v_u, 16D/d)$, write $16D/d = e_ud_u$, $v_u = e_uv_u'$, and perform a change of variables $v = e_uv'$, so that
$$
A_{u, \dd}(v_0, X) =  \left(\frac{e_u}{u}\right)\sum_{\substack{v'\in I_u'\\ v'\equiv v_u'\bmod d_u}}\left(\frac{v'}{u}\right),
$$
where $I_u' = I_u/e_u$. Since $\text{gcd}(v_u', d_u) = 1$, we can now detect the congruence condition $v'\equiv v_u'\bmod d_u$ via Dirichlet characters modulo $d_u$. In other words,
\begin{equation}\label{intermed2}
A_{u, \dd}(v_0, X) =  \frac{1}{\varphi(d_u)}\left(\frac{e_u}{u}\right)\chi(\overline{v_u'})\sum_{\chi\bmod d_u}\sum_{v'\in I_u'}\chi(v')\left(\frac{v'}{u}\right),
\end{equation}
where $\overline{v_u'}$ denotes the multiplicative inverse of $v_u'$ modulo $d_u$. Let $\chi$ be a Dirichlet character modulo $d_u$. If the character
$$v'\mapsto \chi(v')\left(\frac{v'}{u}\right)$$
is trivial, then $u = fg^2$ for some $f$ dividing $d_u$ (and therefore dividing $16D/d$) and some integer $g$. The number of such $u\leq R_1(X)$ is 
$$\leq \tau(16D/d)R_1(X)^{\frac{1}{2}}\ll_{\epsilon} D^{\epsilon}X^{\frac{1}{4}}.$$
In this case we use the trivial bound
$$\sum_{v'\in I_u'}\chi(v')\left(\frac{v'}{u}\right)\ll \#I_u'\leq \#I_u \ll X^{\frac{1}{2}},$$
where the implied constant in $\ll$ is absolute. Hence the contribution of such $u$ to $A_{\dd}(u_0, v_0, X)$ is  
\begin{equation}\label{boundTriv2}
\ll_{\epsilon} D^{\epsilon}X^{\frac{3}{4}}.
\end{equation}
On the other hand, if the character $v'\mapsto \chi(v')\left(\frac{v'}{u}\right)$ is not trivial, its conductor is at most $16Du/d \ll DX^{\frac{1}{2}}$, and so the Polya-Vinogradov inequality gives the estimate
$$\sum_{v'\in I_u'}\chi(v')\left(\frac{v'}{u}\right)\ll_{\epsilon}D^{\frac{1}{2}}X^{\frac{1}{4}+\epsilon}.$$
Combining this with \eqref{intermed1}, \eqref{intermed2}, and \eqref{boundTriv2}, we have proved the bound
\begin{equation}\label{boundPV}
A_{\dd}(X) \ll_{\epsilon}D^{\frac{1}{2}}X^{\frac{3}{4}+\epsilon}.
\end{equation}
We use \eqref{boundPV} for $D < X^{1/6}$ and \eqref{boundTriv1} for $D\geq X^{1/6}$ to obtain
$$A_{\dd}(X)\ll_{\epsilon}X^{\frac{5}{6}+\epsilon}.$$
\end{proof}

\section{Bilinear sums}\label{Bilinearsums}
We are left with proving the estimate (B) from Proposition~\ref{sop}, which we do with $\theta_2 = 1/12$ in much the same way as in \cite[Sections 19-21, p.\ 1018-1028]{FI1}.
\begin{prop}\label{propB}
Let $a_{\nn} = a_{\phi, \psi, \nn}$, where $a_{\phi, \psi, \nn}$ is defined as in \eqref{Wann}, and let $B(M, N)$ be defined as in \eqref{sumB}. Then for all $\epsilon>0$ and all $M, N\geq 2$, we have
$$B(M, N)\ll_{\epsilon}\left(M+N \right)^{\frac{1}{12}}\left(MN\right)^{\frac{11}{12}+\epsilon}.$$
\end{prop}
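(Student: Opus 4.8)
The plan is to adapt the treatment of bilinear spin sums in \cite[Sections 19--21]{FI1}. First I would use Proposition~\ref{propFD} to replace each ideal $\mm$ (resp.\ $\nn$) in $B(M,N)$ by its unique totally positive generator $\mu=m_1+m_2\sqrt2\in\DD$ (resp.\ $\nu=n_1+n_2\sqrt2\in\DD$); since the sequence $\{a_{\phi,\psi,\nn}\}$ is supported on odd ideals we may assume $\mm,\nn$ are odd, so that $m_1,n_1$ are odd. Because the definition \eqref{Wann} already averages over the four units $\varepsilon^{2k}$, we have $a_{\phi,\psi,\mm\nn}=\sum_{k=0}^{3}[\varepsilon^{2k}\mu\nu]_{\phi,\psi}$ for \emph{any} totally positive generators $\mu,\nu$. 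After a dyadic decomposition we may assume $\Norm(\mm)\asymp M$ and $\Norm(\nn)\asymp N$, and since the hypotheses on $\{\alpha_\mm\}$ and $\{\beta_\nn\}$ are both of divisor type we may assume $M\leq N$.

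The heart of the argument is a twisted multiplicativity for the spin symbol. Writing $(\mu\nu)_1=m_1n_1+2m_2n_2$ and $(\mu\nu)_2=m_1n_2+m_2n_1$, one has the polynomial identities
$$m_1(\mu\nu)_1-2m_2(\mu\nu)_2=\Norm(\mu)n_1,\qquad m_1(\mu\nu)_2-m_2(\mu\nu)_1=\Norm(\mu)n_2,$$
together with the versions in which $\mu$ and $\nu$ are interchanged and in which $\mu$ is replaced by $\varepsilon^{2k}\mu$. Feeding the resulting congruences into $[\varepsilon^{2k}\mu\nu]=\left(\frac{(\varepsilon^{2k}\mu\nu)_2}{(\varepsilon^{2k}\mu\nu)_1}\right)$ and applying quadratic reciprocity for the Jacobi symbol repeatedly, I would rewrite $[\varepsilon^{2k}\mu\nu]$ as a product of a spin-type factor depending only on $\varepsilon^{2k}\mu$, a spin-type factor depending only on $\nu$, a Jacobi symbol coupling the variables only through the ideal-theoretic quadratic symbol $\left(\frac{(\onu)}{(\mu)}\right)$, and a sign in $\{\pm1\}$ determined by the residues of $m_1,m_2,n_1,n_2$ modulo a fixed power of $2$. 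The Dirichlet characters $\phi$ and $\psi$ modulo $16$ in \eqref{WGS} are designed precisely to absorb this sign and the $2$-adic corrections coming from reciprocity, and Proposition~\ref{generalLW2} (the $\varepsilon^8$-invariance) is what makes these manipulations consistent with the choice of generator in $\DD$. Summing over $k$ then collapses the $\mu$-factor back into an $a$-type symbol, and, after absorbing the bounded extra factors into the coefficient sequences, one is reduced to bounding bilinear forms of the shape
$$B'(M,N)=\sum_{\Norm(\mm)\asymp M}\sum_{\Norm(\nn)\asymp N}\tilde\alpha_\mm\,\tilde\beta_\nn\left(\frac{(\onu)}{(\mu)}\right),\qquad |\tilde\alpha_\mm|\ll\Lambda(\mm),\quad|\tilde\beta_\nn|\ll\tau(\nn).$$

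Next I would apply Cauchy--Schwarz in the longer variable $\nn$, using $\sum_{\Norm(\nn)\leq N}\tau(\nn)^2\ll N^{1+\epsilon}$, to get
$$|B'(M,N)|^2\ll_\epsilon N^{1+\epsilon}\sum_{\mm,\mm'}\tilde\alpha_\mm\overline{\tilde\alpha_{\mm'}}\sum_{\Norm(\nn)\asymp N}\left(\frac{(\onu)}{(\mu\mu')}\right).$$
For $\mm\ne\mm'$ such that $(\mu\mu')$ is not a unit times a square, the inner sum is $\sum_{\Norm(\nn)\asymp N}\chi_{\mm,\mm'}((\onu))$ for a non-principal quadratic character $\chi_{\mm,\mm'}$ of conductor dividing $(\mu\mu')$; a Polya--Vinogradov estimate (either the version for $\QQT$, or, after expressing $\nu$ in coordinates and flipping by reciprocity, the rational one together with the hyperbola method, exactly as in Section~\ref{Linearsums}) bounds this by $\Norm(\mu\mu')^{1/2+\epsilon}\ll M^{1+\epsilon}$, uniformly in $N$. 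The remaining ``diagonal'' pairs, where $(\mu\mu')$ is a unit times a square, are sparse --- since $\{\tilde\alpha_\mm\}$ is essentially supported on prime powers there are $\ll M^{1+\epsilon}$ of them --- and can be treated trivially. Collecting terms gives $B'(M,N)\ll_\epsilon M^{3/2}N^{1/2+\epsilon}+M^{1/2}N^{1+\epsilon}$.

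This bound already yields the exponent $\theta_2=1/12$ in the range $M\leq N^{6/7}$ (and, by symmetry, when $N\leq M^{6/7}$). In the complementary range $N^{6/7}\leq M\leq N$, where $M$ and $N$ are of comparable size, I would argue separately: after discarding the negligible contribution of proper prime powers one has $\mm=\pi$ prime, and one can split the longer variable $\nn$ at a well-chosen point according to its divisors, apply Cauchy--Schwarz once more, and again invoke the uniform Polya--Vinogradov bound above to gain a fixed power of $MN$. Combining the three estimates proves Proposition~\ref{propB} with $\theta_2=1/12$. I expect the main obstacle to be the twisted multiplicativity of the second step --- carrying out the chain of reciprocity computations while keeping exact control of every $2$-adic sign, and checking that $\phi$, $\psi$, and Proposition~\ref{generalLW2} absorb them cleanly; a secondary difficulty is the bookkeeping in the balanced regime, which is what forces the exponent down to $1/12$ rather than something closer to what the large sieve for quadratic characters would give.
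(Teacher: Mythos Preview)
Your overall architecture is right and matches the paper: reduce to generators in $\DD$, prove a twisted multiplicativity $[wz]\sim[w][z]\gamma(w,z)$ with $\gamma(w,z)=\left(\frac{\ow\oz}{(w)}\right)$, absorb the $2$-adic signs into the $\phi,\psi$ and into the coefficients, and then estimate a general bilinear form in $\gamma$. The gap is in the analytic endgame, and it is a real one.

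The claimed Polya--Vinogradov bound is wrong. The inner sum after Cauchy--Schwarz is a sum of $\chi_{\mm,\mm'}(\onu)$ over lattice points $\nu=c+d\sqrt{2}$ in a two-dimensional region of area $\asymp N$; it is not a one-dimensional character sum. Fixing one coordinate and applying rational Polya--Vinogradov in the other gives $\ll W^{1/2+\epsilon}$ per line, but there are $\asymp N^{1/2}$ lines, so the honest bound is $\ll W^{1/2}N^{1/2+\epsilon}$, not $W^{1/2+\epsilon}$ uniformly in $N$. (The argument in Section~\ref{Linearsums} that you cite does exactly this: it sums the one-variable PV bound over the other variable, picking up a factor $X^{1/2}$.) The same $N^{1/2}$ loss appears if instead you complete the sum modulo $W$ and count boundary lattice points, which is what the paper does in Lemma~\ref{generalQsum1}. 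With the corrected inner bound, your Cauchy--Schwarz step yields at best $B'\ll M^{1/2}N+M^{2}N^{3/4}+M^{3}N^{1/2}$ (this is Lemma~\ref{generalQsum1}), which does \emph{not} give $\theta_2=1/12$ anywhere near $M\leq N^{6/7}$; it only works for $M$ a small power of $N$.

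The step you are missing, and the one that actually bridges the balanced range, is the H\"older amplification from \cite[\S21]{FI1}: apply H\"older with exponent $6$ in the $w$-sum and use the multiplicativity $\gamma(w,z_1)\cdots\gamma(w,z_6)=\mul(w)^5\gamma(w,z_1\cdots z_6)$ to convert $|\sum_z|^6$ into a single bilinear form with the $z$-variable now ranging over products of six elements, i.e.\ effectively in $\BB(N^6)$. Feeding this enlarged sum back into the Cauchy--Schwarz/completion estimate produces Lemma~\ref{generalQsum2}, namely $Q\ll M^{11/12}N+M^{7/6}N^{3/4}+M^{4/3}N^{1/2}$, which \emph{does} give $(M+N)^{1/12}(MN)^{11/12}$ for all $M\leq N$; reciprocity for $\gamma$ then handles $M\geq N$. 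Your proposed alternative for the balanced regime---splitting $\nn$ by its divisors and applying Cauchy--Schwarz again---cannot work as stated, because the weight $\beta_\nn$ is an arbitrary sequence bounded by $\tau(\nn)$ with no multiplicative structure to exploit; there is nothing to split. The amplification has to happen on the $\gamma$-side via its multiplicativity, not on the $\beta$-side.
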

Our basic strategy will be to prove a factorization formula of the type $[wz] = [w][z]\gamma(w, z)$, where $\gamma(w, z)$ is a quantity which oscillates in both arguments $w, z\in\ZZT$. We first develop some background necessary to define $\gamma(w, z)$ and then prove power-saving cancellation for general bilinear sums of the type $\sumsum_{w, z}\alpha_w\beta_z\gamma(w, z)$.

\subsection{Primitivity}
We say that an ideal $\aaa$ in $\ZZT$ is \textit{primitive} if whenever $\pp$ is a prime ideal dividing $\aaa$, then $\pp$ is unramified, of residue degree one, and $\opp$ does not divide $\aaa$. Here and after, if $x$ is an element or an ideal in $\ZZT$ we will use $\overline{x}$ to denote the conjugate of $x$ over $\QQ$. The main property of primitive ideals that we will use is that the inclusion $\ZZ\hookrightarrow \ZZT$ induces an isomorphism
\begin{equation}\label{primitive1}
\ZZ/(\Norm(\aaa))\stackrel{\sim}{\rightarrow} \ZZT/\aaa. 
\end{equation}
We call an ideal $\aaa$ (resp.\ element $w$) in $\ZZT$ \textit{odd} if $\Norm(\aaa)$ (resp.\ $\Norm(w)$) is an odd integer. An ideal in $\ZZT$ is odd if and only if every prime ideal that divides $\aaa$ is unramified. Hence, an ideal $\aaa$ is primitive if and only if $\aaa$ is odd and there is no rational prime $p$ dividing $\aaa$ (i.e., no rational prime $p$ such that $(p)$ divides $\aaa$). 
\begin{remark}
For instance, $\Norm(7) = 49$, but $\ZZT/(7)\cong \ZZT/(3+\sqrt{2})\times \ZZT/(3-\sqrt{2})\cong \ZZ/(7)\times\ZZ/(7) \ncong \ZZ/(49)$.
\end{remark}
For every integer $n$ we have the equality of quadratic residue symbols
\begin{equation}\label{primitive2}
\left(\frac{n}{\Norm(\aaa)}\right) = \left(\frac{n}{\aaa}\right),
\end{equation}
where the symbol on the left is the usual Jacobi symbol while the symbol on the right is the quadratic residue symbol in $\ZZT$, i.e., for $\alpha\in\ZZT$,
$$
\left(\frac{\alpha}{\aaa}\right) := \prod_{\pp^{k_{\pp}}\|\aaa}\left(\frac{\alpha}{\pp}\right)^{k_{\pp}},
$$
where 
$$
\left(\frac{\alpha}{\pp}\right) := 
\begin{cases}
1 & \text{if }(\alpha, \pp) = 1\text{ and }\alpha\equiv \square\bmod \aaa \\
-1 & \text{if }(\alpha, \pp) = 1\text{ and }\alpha\not\equiv \square\bmod \aaa \\
0 & \text{otherwise.} 
\end{cases}
$$
Now it follows immediately from \eqref{primitive1} and \eqref{primitive2} that 
\begin{equation}\label{primitive3}
\sum_{z\in \ZZT/\aaa}\left(\frac{z}{\aaa}\right) = \sum_{n\in \ZZ/(\Norm(\aaa))}\left(\frac{n}{\Norm(\aaa)}\right).
\end{equation}
The following is yet another characterization of primitive ideals.
\begin{lemma}\label{primitive4}
Suppose $\aaa\subset\ZZT$ is an odd ideal. Then $\aaa$ is primitive if and only if $\gcd(\aaa, \oaaa) = (1)$.
\end{lemma}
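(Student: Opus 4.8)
The plan is to prove both implications by using the factorization of $\aaa$ into prime ideals, together with the hint already recorded in the discussion preceding the lemma: an odd ideal $\aaa$ is primitive if and only if $\aaa$ is odd and no rational prime $p$ divides $\aaa$. So the real content is to show, for an odd ideal $\aaa$, that ``no rational prime divides $\aaa$'' is equivalent to $\gcd(\aaa,\oaaa)=(1)$.

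First I would prove the easy direction. Suppose $\aaa$ is not primitive. Since $\aaa$ is odd, this means some rational prime $p$ divides $\aaa$, i.e.\ $(p)\mid\aaa$. In $\ZZT$, the ideal $(p)$ is stable under conjugation: $\overline{(p)}=(p)$. Hence $(p)\mid\oaaa$ as well, so $(p)\mid\gcd(\aaa,\oaaa)$, and in particular $\gcd(\aaa,\oaaa)\neq(1)$. (If one prefers to argue at the level of prime ideals: if $p$ is inert then $(p)$ is itself a prime ideal dividing both $\aaa$ and $\oaaa$; if $p$ splits as $\pp\opp$, then both $\pp$ and $\opp$ divide $\aaa$, hence $\pp\mid\aaa$ and $\pp=\overline{\opp}\mid\oaaa$.)

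For the converse, suppose $\gcd(\aaa,\oaaa)\neq(1)$, so there is a prime ideal $\pp$ dividing both $\aaa$ and $\oaaa$. Since $\aaa$ is odd, $\pp$ is unramified, so $\pp$ lies over a rational prime $p$ that is either inert or split. If $p$ is inert, then $\pp=(p)$ is a rational prime dividing $\aaa$, so $\aaa$ is not primitive. If $p$ splits as $(p)=\pp\opp$ with $\pp\neq\opp$, then from $\pp\mid\oaaa$ we get, applying conjugation, $\opp\mid\aaa$; combined with $\pp\mid\aaa$ and the fact that $\pp$ and $\opp$ are coprime, this gives $\pp\opp=(p)\mid\aaa$, so again a rational prime divides $\aaa$ and $\aaa$ is not primitive. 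In either case $\aaa$ fails to be primitive, which completes the proof.

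The only point requiring slight care — and the place I would expect a referee to look closely — is the bookkeeping in the split case: one must use that $\pp$ and $\opp$ are distinct prime ideals (which holds because $p$ is unramified, $\aaa$ being odd) in order to pass from ``$\pp\mid\aaa$ and $\opp\mid\aaa$'' to ``$(p)\mid\aaa$.'' Everything else is a routine consequence of unique factorization of ideals in the Dedekind domain $\ZZT$ and the conjugation-invariance of rational prime ideals. One could alternatively phrase the whole argument using \eqref{primitive1} — $\aaa$ primitive $\iff$ $\ZZ/(\Norm\aaa)\xrightarrow{\sim}\ZZT/\aaa$ — but the prime-ideal argument above is the cleanest.
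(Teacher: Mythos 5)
Your proof is correct and follows essentially the same route as the paper: both directions reduce to the characterization ``odd and no rational prime divides $\aaa$,'' with the converse handled by conjugating $\pp\mid\oaaa$ to get $\opp\mid\aaa$ and then splitting into the inert and split cases. Your version merely spells out the conjugation step and the coprimality of $\pp$ and $\opp$ a bit more explicitly than the paper does.
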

\begin{proof}
If $\aaa$ is not primitive, then there is a rational prime $p$ dividing $\aaa$. As $p$ is rational, it also divides $\oaaa$, and so $\gcd(\aaa, \oaaa)\neq (1)$. Conversely, if $\gcd(\aaa, \oaaa)\neq (1)$, then there is a prime ideal $\pp$ in $\ZZT$ such that both $\pp$ and $\opp$ divide $\aaa$. If $\pp$ is a prime of degree $2$, then $\pp = (p)$ for some rational prime $p$ and automatically $\aaa$ is not primitive. Otherwise, as $\aaa$ is odd and the only prime that ramifies in $\QQT/\QQ$ is $2$, we conclude that $\pp$ and $\opp$ are coprime, and hence that $\pp\opp$ divides $\aaa$. Once again, as $\pp\opp = (p)$ for a rational prime $p$, $\aaa$ is not primitive.
\end{proof}
Suppose $\aaa$ and $\bb$ are ideals in $\ZZT$. If one of $\aaa$ and $\bb$ is not primitive, then clearly their product $\aaa\bb$ is not primitive. Even if both $\aaa$ and $\bb$ are primitive, the product $\aaa\bb$ need \textit{not} be primitive. Nonetheless, we have the following lemma.
\begin{lemma}\label{primitive5}
Suppose $\aaa$ and $\bb$ are primitive. Let $\rrr = \gcd(\aaa, \obb)$ and $r = \Norm(\rrr)$. Then $\aaa\bb/(r)$ is primitive. In particular, $\aaa\bb$ is primitive if and only if $\gcd(\aaa, \obb) = (1)$.
\end{lemma}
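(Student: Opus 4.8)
The plan is to reduce the statement to a short computation with prime-ideal valuations, using unique factorization of ideals in the Dedekind domain $\ZZT$ together with the characterization of primitivity in Lemma~\ref{primitive4}. First I would record the basic properties of $\rrr:=\gcd(\aaa,\obb)$. Since $\rrr$ divides the primitive ideal $\aaa$ and $\overline{\rrr}$ divides $\oaaa$, we have $\gcd(\rrr,\overline{\rrr})\mid\gcd(\aaa,\oaaa)=(1)$; as $\rrr$ is odd, Lemma~\ref{primitive4} shows that $\rrr$ is itself primitive. Applying conjugation to $\rrr=\gcd(\aaa,\obb)$ gives $\overline{\rrr}=\gcd(\oaaa,\bb)$, so $\overline{\rrr}\mid\bb$. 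If $\gamma\succ 0$ generates $\rrr$, then $r=\Norm(\rrr)=\gamma\overline{\gamma}$ and $\rrr\,\overline{\rrr}=(\gamma\overline{\gamma})=(r)$; since $\rrr\mid\aaa$ and $\overline{\rrr}\mid\bb$, it follows that $(r)\mid\aaa\bb$, and we may write $\ccc:=\aaa\bb/(r)=\aaa_1\bb_1$ where $\aaa_1:=\aaa/\rrr$ and $\bb_1:=\bb/\overline{\rrr}$ are integral ideals.

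Next I would observe that $\aaa_1\mid\aaa$ and $\bb_1\mid\bb$, so by the same argument $\aaa_1$ and $\bb_1$ are primitive; in particular $\ccc$ is odd. By Lemma~\ref{primitive4} it then suffices to prove $\gcd(\ccc,\overline{\ccc})=(1)$. The key point is that dividing $\aaa$ by $\rrr$ removes exactly the part of $\aaa$ shared with $\obb$: since $\overline{\bb_1}=\overline{\bb/\overline{\rrr}}=\obb/\rrr$, comparing $\pp$-adic valuations (recall $v_{\pp}(\rrr)=\min(v_{\pp}(\aaa),v_{\pp}(\obb))$) gives
$$
\gcd(\aaa_1,\overline{\bb_1})=\gcd\!\bigl(\aaa/\rrr,\,\obb/\rrr\bigr)=\gcd(\aaa,\obb)/\rrr=\rrr/\rrr=(1),
$$
and conjugating yields $\gcd(\overline{\aaa_1},\bb_1)=(1)$ as well. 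Together with the primitivity relations $\gcd(\aaa_1,\overline{\aaa_1})=\gcd(\bb_1,\overline{\bb_1})=(1)$, these four coprimalities imply that no prime ideal divides both $\ccc=\aaa_1\bb_1$ and $\overline{\ccc}=\overline{\aaa_1}\,\overline{\bb_1}$: a prime dividing the first divides $\aaa_1$ or $\bb_1$, a prime dividing the second divides $\overline{\aaa_1}$ or $\overline{\bb_1}$, and each of the four resulting overlaps is excluded. Hence $\gcd(\ccc,\overline{\ccc})=(1)$, and $\ccc=\aaa\bb/(r)$ is primitive.

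For the final assertion: if $\gcd(\aaa,\obb)=(1)$ then $\rrr=(1)$, $r=1$, and $\aaa\bb=\aaa\bb/(r)$ is primitive by the above; conversely, since a primitive ideal is divisible by no rational prime and $(r)\mid\aaa\bb$, primitivity of $\aaa\bb$ forces $r=1$, i.e.\ $\gcd(\aaa,\obb)=(1)$.

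The only step that really needs care is the valuation identity $\gcd(\aaa/\rrr,\obb/\rrr)=\gcd(\aaa,\obb)/\rrr$ — it is purely formal (for each prime $\pp$ the left side has $v_{\pp}$ equal to $\min(v_{\pp}(\aaa)-v_{\pp}(\rrr),v_{\pp}(\obb)-v_{\pp}(\rrr))=\min(v_{\pp}(\aaa),v_{\pp}(\obb))-v_{\pp}(\rrr)=0$), but it is the one line that rigorously encodes the idea of ``subtracting off the overlap,'' and everything else is routine manipulation of ideals, norms, and conjugation.
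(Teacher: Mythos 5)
Your proof is correct, and it takes a slightly different route from the paper's. The paper argues directly from the definition of primitivity: for each rational prime $p$ dividing $\aaa\bb$, it shows $(p)$ is split, say $p=\pp\opp$ with $\pp\mid\aaa$, and then compares the exact power $p^k\|\aaa\bb$ with the $\pp$-adic valuation of $\rrr$ to conclude that $p^k$ is also the exact power of $p$ dividing $r$; thus dividing by $(r)$ removes every rational-prime factor from $\aaa\bb$. You instead reduce to the criterion of Lemma~\ref{primitive4} (an odd ideal $\ccc$ is primitive iff $\gcd(\ccc,\overline{\ccc})=(1)$), exhibit the explicit factorization $\aaa\bb/(r)=\aaa_1\bb_1$ with $\aaa_1=\aaa/\rrr$ and $\bb_1=\bb/\overline{\rrr}$, and verify the four coprimality relations $\gcd(\aaa_1,\overline{\aaa_1})=\gcd(\bb_1,\overline{\bb_1})=\gcd(\aaa_1,\overline{\bb_1})=\gcd(\overline{\aaa_1},\bb_1)=(1)$. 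Both arguments are at bottom valuation computations at split primes, but your route has two mild advantages: it reuses the already-established characterization from Lemma~\ref{primitive4} rather than re-deriving it, and it records the clean factorization $\aaa\bb/(r)=(\aaa/\rrr)(\bb/\overline{\rrr})$ into primitive, mutually ``conjugate-coprime'' pieces, which makes the structure more transparent. The paper's proof is a bit more elementary, as it needs only the definition and no auxiliary lemma. One small remark: the observation that $\rrr$ is itself primitive, while true, is never actually used in your argument; the only facts needed about $\rrr$ are $\rrr\mid\aaa$, $\overline{\rrr}\mid\bb$, and $\rrr\overline{\rrr}=(r)$.
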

\begin{proof}
Suppose $p$ is a rational prime such that $p$ divides $\aaa\bb$. Then $(p)$ cannot be a prime in $\ZZT$, because otherwise either $\aaa$ or $\bb$ is not primitive. Hence there exists a prime ideal $\pp\subset\ZZT$ such that $p = \pp\opp$ and $\pp|\aaa$. If $p^k$ is the exact power of $p$ dividing $\aaa\bb$, then the assumption that $\aaa$ and $\bb$ are primitive implies that $\pp^k|\aaa$ and $\opp^k|\bb$, which is true if and only if $\pp^k|\rrr$.
\end{proof}
There is another way to obtain a primitive ideal from a product of two odd primitive ideals $\aaa$ and $\bb$. We can write
$$
\aaa = \prod_{p\text{ split}}\pp^{a_p}\opp^{\oa_p}\ \ \ \ \ \ \ \text{and}\ \ \ \ \ \ \ \bb = \prod_{p\text{ split}}\pp^{b_p}\opp^{\ob_p}, 
$$
where $a_p\oa_p = b_p\ob_p = 0$ for every $p$. Let $\rrr = \gcd(\aaa, \bb)$ and let $r = \Norm(\rrr)$. If a prime $p$ divides $r$, after possibly interchanging the roles of $\pp$ and $\opp$ in the products above, we can assume that $\pp$ divides $\rrr$. For every such prime $p$, define
$$
\ccc_{a, p} = 
\begin{cases}
\pp^{a_p}&\text{ if }a_p\leq \ob_p, \\
1&\text{ otherwise},
\end{cases}
\ \ \ \ \ \ \text{and}\ \ \ \ \ \ 
\ccc_{b, p} = 
\begin{cases}
1&\text{ if }a_p\leq \ob_p, \\
\opp^{\ob_p}&\text{ otherwise},
\end{cases}
$$
and set
$$
\ccc_{a} = \prod_p\ccc_{a, p},\ \ \ \ \ \ \ \ccc_{b} = \prod_p\ccc_{b, p},\ \ \ \ \ \ \text{and}\ \ \ \ \ \ \ \ccc = \ccc_{a}\ccc_{b}.
$$
Then clearly
$$
\Norm(\ccc) = \Norm(\rrr) = r.
$$
Moreover, by construction 
$$
\gcd\left(\frac{\aaa}{\ccc_{a}}, \frac{\bb}{\overline{\ccc_{b}}}\right) = (1),
$$
so by Lemma~\ref{primitive4}, we conclude $\aaa\bb/\ccc$ is primitive. By construction, $\ccc$ is also primitive and coprime to $\aaa\bb/\ccc$. Therefore, using the Chinese Remainder Theorem and applying \eqref{primitive1} twice, we conclude that
\begin{equation}\label{primitive6}
\ZZT/\aaa\bb \cong \ZZT/(\aaa\bb/\ccc)\times\ZZT/\ccc \cong \ZZ/(W/r)\times \ZZ/(r),
\end{equation}
where $W = \Norm(\aaa\bb)$.
\\\\
Finally, we say that an element $w\in\ZZT$ is \textit{primitive} if and only if the principal ideal generated by $w$ is primitive. An equivalent definition is that $w = a+b\Sd$ is odd and $\gcd(a, b) = 1$. 

\subsection{A quasi-bilinear symbol with a reciprocity law}
For $w, z\in\ZZT$ with $w$ odd, we define the \textit{generalized Dirichlet symbol} $\gamma(w, z)$ to be
\begin{equation}\label{gammawz}
\gamma(w, z) := \left(\frac{\ow\oz}{(w)}\right),
\end{equation}
where $\left(\frac{\cdot}{\cdot}\right)$ is the quadratic residue symbol in $\QQT$. Our choice of terminology is inspired by the Dirichlet symbol defined in a slightly different context in \cite[Section 19, p.\ 1018-1021]{FI1}.
\\\\
The symbol $\gamma(w, z)$ factors as 
\begin{equation}\label{factorgamma}
\gamma(w, z) = \mul(w)\left(\frac{z}{(\ow)}\right),
\end{equation}
where, for odd $w\in\ZZT$, we define
\begin{equation}\label{defmul}
\mul(w) := \gamma(w, 1) = \left(\frac{\ow}{(w)}\right).
\end{equation}
By Lemma~\ref{primitive2}, if $w\in\ZZT$ is odd, then
$$
\mul(w) \neq 0 \Longleftrightarrow \gcd((w), (\ow)) = (1) \Longleftrightarrow w\text{ is primitive}.
$$
Hence the factor $\mul(w)$ restricts the support of $\gamma(w, z)$ to $w$ that are primitive. Furthermore, if $w$ is primitive, then
$\gcd((w), (\ow\oz)) = \gcd((w), (\oz))$, and so in this case $\gamma(w, z) = 0$ if and only if $\gcd((w), (\oz))\neq (1)$.
\\\\
The factor $\left(\frac{z}{(\ow)}\right)$ is completely multiplicative in $z$, so it follows from \eqref{factorgamma} that
\begin{equation}\label{gmult}
\gamma(w, z_1)\gamma(w, z_2) = \gamma(w, z_1z_2)\mul(w),
\end{equation}
for any $w$, $z_1$, and $z_2$ in $\ZZT$ such that $w$ is odd. Hence the symbol $\gamma(w, z)$ is multiplicative in $z$ except for a twist by $\mul(w)$. 
\\\\
The symbol $\gamma(w, z)$ also satisfies a reciprocity law, which is an important ingredient in our proof of Proposition~\ref{propB}. 
\begin{lemma}\label{reciprocitylem}
Let $w, z\in \ZZT$ such that both $w$ and $z$ are odd. Then
$$
\gamma(w, z)\gamma(z, w) = \mul(wz).
$$
\end{lemma}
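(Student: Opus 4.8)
The plan is to reduce the identity to two elementary facts: that conjugation is a ring automorphism of $\ZZT$, so that $\overline{wz} = \ow\,\oz$, and that principal ideals multiply, so that $(wz) = (w)(z)$; together with the complete multiplicativity of the quadratic residue symbol $\left(\frac{\cdot}{\cdot}\right)$ in its lower (ideal) argument, which is immediate from its definition as a product over the prime ideals dividing the denominator.

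First I would start from $\mul(wz) = \left(\frac{\overline{wz}}{(wz)}\right)$ (the defining equation \eqref{defmul} with $w$ replaced by $wz$), and use multiplicativity in the denominator to split it as
$$
\mul(wz) = \left(\frac{\ow\oz}{(w)}\right)\left(\frac{\ow\oz}{(z)}\right).
$$
Then I would recognize the first factor as $\gamma(w,z)$ by \eqref{gammawz}, and, after rewriting $\ow\oz = \oz\,\ow$, the second factor as $\gamma(z,w)$ by \eqref{gammawz} with the roles of $w$ and $z$ swapped. This yields $\mul(wz) = \gamma(w,z)\gamma(z,w)$ at once.

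Since the argument is purely formal, there is no real obstacle; the only point that warrants a line of comment is the consistency of the vanishing behaviour. If $w$ or $z$ fails to be primitive, or if $w$ and $z$ are both primitive but $\gcd((w),(\oz)) \neq (1)$, then $wz$ is not primitive by Lemma~\ref{primitive5}, so $\mul(wz) = 0$; and in each of these cases some prime ideal dividing $(w)$ or $(z)$ also divides the numerator $\ow\oz$, so the corresponding symbol $\gamma(w,z)$ or $\gamma(z,w)$ vanishes as well, and both sides agree. Hence the identity holds with no separate treatment of its support, and the same chain of equalities is valid verbatim in all cases (including when $w$ or $z$ is a unit, where the relevant symbol is an empty product equal to $1$).
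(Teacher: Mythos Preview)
Your proof is correct and essentially identical to the paper's: both rely solely on the definition \eqref{gammawz}, the identity $\overline{wz}=\ow\,\oz$, and multiplicativity of the residue symbol in the ideal argument to pass between $\left(\frac{\ow\oz}{(w)}\right)\left(\frac{\ow\oz}{(z)}\right)$ and $\left(\frac{\ow\oz}{(wz)}\right)=\mul(wz)$. The only difference is that you run the chain of equalities from $\mul(wz)$ to $\gamma(w,z)\gamma(z,w)$ rather than the reverse, and you add a (correct but unnecessary) paragraph checking the support; the paper omits this since the symbol identities hold verbatim whether or not the values vanish.
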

\begin{proof}
We have
$$
\gamma(w, z)\gamma(z, w) = \left(\frac{\ow\oz}{(w)}\right)\left(\frac{\oz\ow}{(z)}\right) = \left(\frac{\ow\oz}{(wz)}\right) = \mul(wz).
$$
\end{proof}
Finally, we note that $\gamma(w, z)$ is periodic in the second argument. In fact, $\gamma(w, z_1) = \gamma(w, z_2)$ whenever $z_1\equiv z_2\bmod (\ow)$. In other words, $\gamma(w, \cdot)$ is a function on $\ZZT/(\ow)$. This allows us to prove the following analogue of \cite[Lemma 21.1, p.\ 1025]{FI1}, which will provide all of the cancellation that we need for Proposition~\ref{propB}.
\begin{lemma}\label{keycancellation}
Let $w_1, w_2\in\ZZT$ be primitive. Let $\rrr = \gcd((w_1), (\ow_2))$, $r = \Norm(r)$, $W = \Norm(w_1w_2)$. Then
$$
\left|\sum_{\substack{z\in \ZZT/(W)}}\gamma(w_1, z)\gamma(w_2, z) \right| =
\begin{cases}
W\varphi(r)\varphi(W/r) & \text{if }W\text{ and }r\text{ are squares} \\
0 & \text{otherwise.}
\end{cases}
$$
\end{lemma}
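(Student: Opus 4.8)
\textit{Strategy.} The plan is to strip off the multiplicative factor $\mul$, reduce the sum to a quadratic character sum on the residue ring $\ZZT/(\overline{w_1w_2})$, evaluate that by orthogonality, and finally rewrite the answer in terms of $W$ and $r$ using the factorizations developed above.

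First I would record the pointwise identity
$$
\gamma(w_1,z)\gamma(w_2,z)=\mul(w_1)\mul(w_2)\left(\frac{z}{(\overline{w_1})}\right)\left(\frac{z}{(\overline{w_2})}\right)=\mul(w_1)\mul(w_2)\left(\frac{z}{(\overline{w_1w_2})}\right),
$$
which follows from \eqref{factorgamma}, from the fact that the quadratic residue symbol is totally multiplicative in its denominator, and from $(\overline{w_1})(\overline{w_2})=(\overline{w_1w_2})$. Since $w_1$ and $w_2$ are primitive, $\mul(w_1),\mul(w_2)\in\{\pm1\}$, so it suffices to evaluate $\bigl|\sum_{z\in\ZZT/(W)}\bigl(\frac{z}{(\overline{w_1w_2})}\bigr)\bigr|$. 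Put $\aaa=(\overline{w_1w_2})$, so that $\Norm(\aaa)=W$ and $(W)=(w_1w_2)\aaa\subseteq\aaa$. Then $z\mapsto\bigl(\frac{z}{\aaa}\bigr)$ depends only on $z\bmod\aaa$, and the projection $\ZZT/(W)\twoheadrightarrow\ZZT/\aaa$ has all fibres of size $W^2/W=W$; hence $\sum_{z\in\ZZT/(W)}\bigl(\frac z\aaa\bigr)=W\sum_{z\in\ZZT/\aaa}\bigl(\frac z\aaa\bigr)$.

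Next I would evaluate the inner sum. Since $w_1w_2$ is odd, every prime $\pp\mid\aaa$ is unramified of residue degree one; writing $\aaa=\prod_\pp\pp^{v_\pp}$ and using the Chinese Remainder Theorem, the symbol $\bigl(\frac z\aaa\bigr)=\prod_\pp\bigl(\frac z\pp\bigr)^{v_\pp}$ vanishes exactly on the non-units of $\ZZT/\aaa$ and restricts to a homomorphism $(\ZZT/\aaa)^\times\to\{\pm1\}$ which is trivial precisely when every $v_\pp$ is even, i.e.\ when $\aaa$ is the square of an ideal (because the quadratic character of $\FF_p^\times$ is non-trivial for each odd $p$). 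Orthogonality then gives
$$
\sum_{z\in\ZZT/\aaa}\left(\frac z\aaa\right)=\begin{cases}\#(\ZZT/\aaa)^\times=\prod_\pp\varphi\bigl(\Norm(\pp)^{v_\pp}\bigr),& \aaa\text{ a square ideal},\\[2pt]0,&\text{otherwise};\end{cases}
$$
alternatively one may compute the local factor $\sum_{z\in\ZZT/\pp^{v_\pp}}\bigl(\frac z\pp\bigr)^{v_\pp}$ directly, getting $\varphi(\Norm(\pp)^{v_\pp})$ for $v_\pp$ even and $0$ for $v_\pp$ odd from $\sum_{c\in\FF_p}\bigl(\frac cp\bigr)=0$.

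It remains to translate ``$\aaa$ is a square ideal'' and the product $\prod_\pp\varphi(\Norm(\pp)^{v_\pp})$ into statements about $W$ and $r$. For this I would apply Lemma~\ref{primitive5} to $(w_1)$ and $(w_2)$ and conjugate it, writing $\aaa=(r)\qq$ with $\qq$ primitive. At a conjugate pair $\{\pp,\opp\}$ above a rational prime $p$ one has $v_\pp((r))=v_\opp((r))=v_p(r)$ and, by primitivity of $\qq$, $\min(v_\pp(\qq),v_\opp(\qq))=0$; hence $v_p(r)=\min(v_\pp(\aaa),v_\opp(\aaa))$, and since $v_\pp(\aaa)+v_\opp(\aaa)=v_p(W)$ the unordered pair $\{v_\pp(\aaa),v_\opp(\aaa)\}$ equals $\{v_p(r),v_p(W/r)\}$. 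It follows at once that $\aaa$ is a square ideal exactly when $r$ and $W$ are both perfect squares, and that in that case $\prod_\pp\varphi(\Norm(\pp)^{v_\pp})=\prod_p\varphi(p^{v_p(r)})\varphi(p^{v_p(W/r)})=\varphi(r)\varphi(W/r)$. Combining with the previous paragraphs yields $\bigl|\sum_{z\in\ZZT/(W)}\gamma(w_1,z)\gamma(w_2,z)\bigr|=W\varphi(r)\varphi(W/r)$ when $W$ and $r$ are squares, and $0$ otherwise. The main obstacle is precisely this last step: the hypothesis is phrased through $\rrr=\gcd((w_1),(\overline{w_2}))$, which a priori has nothing to do with the divisibility structure of $(\overline{w_1w_2})$, and one must identify $(r)$ with the non-primitive part of $(\overline{w_1w_2})$ — where primitivity of the individual factors $w_1,w_2$ (via Lemma~\ref{primitive5}) is essential — while taking care that $(r)$ and $\qq$ need not be coprime, so the bookkeeping is done valuation by valuation rather than by a single two-factor application of the Chinese Remainder Theorem.
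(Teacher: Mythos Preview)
Your argument is correct and follows the same overall strategy as the paper: strip off $\mul(w_1)\mul(w_2)$, descend from $\ZZT/(W)$ to the residue ring modulo the ideal generated by (the conjugate of) $w_1w_2$, and evaluate the resulting quadratic character sum. The only substantive difference is in the final identification. The paper invokes the $\ccc$-construction leading to \eqref{primitive6} to split $\ZZT/(w_1w_2)$ as a product of two \emph{coprime primitive} pieces of norms $W/r$ and $r$, and then applies \eqref{primitive3} twice to land on the Jacobi-symbol sums $\sum_{a\bmod W/r}\bigl(\frac{a}{W/r}\bigr)$ and $\sum_{b\bmod r}\bigl(\frac{b}{r}\bigr)$. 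You instead work prime by prime on $\aaa=(\overline{w_1w_2})$ and only afterwards recognise, via Lemma~\ref{primitive5}, that at each split prime $p$ the unordered pair of exponents in $\aaa$ is $\{v_p(r),v_p(W/r)\}$. Your route avoids the somewhat delicate $\ccc$-construction at the cost of the valuation bookkeeping you flag in your last sentence; the paper's route packages that bookkeeping into the structural isomorphism \eqref{primitive6} so that the endgame is just two classical Jacobi sums. Both yield the same answer with comparable effort.
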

\begin{proof}
By \eqref{factorgamma}, we have 
$$
\gamma(w_1, z)\gamma(w_2, z) = \mul(w_1)\mul(w_2)\left(\frac{\oz}{(w_1w_2)}\right),
$$
and, as $w_1$ and $w_2$ are odd and primitive, $\mul(w_1)\mul(w_2)\neq 0$. Hence
$$
\left|\sum_{\substack{z\in \ZZT/(W)}}\gamma(w_1, z)\gamma(w_2, z) \right| = \left|\sum_{\substack{z\in \ZZT/(W)}}\left(\frac{\oz}{(w_1w_2)}\right) \right|.
$$
Now, as $W$ is rational, the map $z\mapsto \oz$ is an automorphism of the group $\ZZT/(W)$. Thus, we obtain
$$
\sum_{\substack{z\in \ZZT/(W)}}\left(\frac{\oz}{(w_1w_2)}\right) = \sum_{\substack{z\in \ZZT/(W)}}\left(\frac{z}{(w_1w_2)}\right) .
$$
As $\left(\frac{\cdot}{(w_1w_2)}\right)$ is already a function on $\ZZT/(w_1w_2)$, and 
$$
\#\left(\ZZT/(W)\right) = W\cdot \#\left(\ZZT/(w_1w_2)\right),
$$
we have
$$
\sum_{\substack{z\in \ZZT/(W)}}\left(\frac{z}{(w_1w_2)}\right) = W\sum_{\substack{z\in \ZZT/(w_1w_2)}}\left(\frac{z}{(w_1w_2)}\right).
$$
By \eqref{primitive6}, we have
$$
\ZZT/(w_1w_2) \cong \ZZT/(\alpha)\times \ZZT/(\beta),
$$
where $(\alpha)$ and $(\beta)$ are coprime primitive ideals of norm $W/r$ and $r$, respectively, satisfying $(w_1w_2) = (\alpha\beta)$. Hence
$$
\sum_{\substack{z\in \ZZT/(w_1w_2)}}\left(\frac{z}{(w_1w_2)}\right) = \sumsum_{\substack{z_{01}\bmod \alpha \\ z_{02}\bmod \beta}}\left(\frac{z}{(\alpha\beta)}\right),
$$
where
$$
z = z_{01}\cdot\beta\cdot\beta' + z_{02}\cdot\alpha\cdot\alpha'
$$
and $\alpha'$ and $\beta'$ are some elements of $\ZZT$ such that $\alpha\alpha'\equiv 1\bmod \beta$ and $\beta\beta'\equiv 1\bmod \alpha$. With these choices, we have
$$
\left(\frac{z}{(\alpha\beta)}\right) = \left(\frac{z}{(\alpha)}\right)\left(\frac{z}{(\beta)}\right) = \left(\frac{z_{01}}{(\alpha)}\right)\left(\frac{z_{02}}{(\beta)}\right). 
$$ 
Then, by \eqref{primitive3}, we have 
$$
\sum_{z_{01}\bmod \alpha}\left(\frac{z_{01}}{(\alpha)}\right) \sum_{z_{02}\bmod \beta}\left(\frac{z_{02}}{(\beta)}\right) = \sum_{a\in \ZZ/(W/r)}\left(\frac{a}{W/r}\right)\sum_{b\in \ZZ/(r)}\left(\frac{b}{r}\right),
$$
where the symbols on the right-hand side of the equality are the usual Jacobi symbols. For any positive integer $n$, we have
$$
\sum_{a\in \ZZ/(n)}\left(\frac{a}{n}\right) = 
\begin{cases}
\varphi(n)&\text{ if }n\text{ is a square}, \\
0&\text{ otherwise}.
\end{cases}
$$
Combining all of the equations above, we conclude the proof of the proposition. 
\end{proof}
We conclude this section by expressing $\gamma(w, z)$ as a Jacobi symbol. Suppose $w = a+b\sqrt{2}$ and $z = c+d\sqrt{2}$, with $w$ primitive and totally positive. Then
$$
\left(\frac{\ow\oz}{(w)}\right) = \left(\frac{wz+\ow\oz}{(w)}\right) = \left(\frac{2ac+4bd}{a^2-2b^2}\right).
$$
Moreover, as $w$ is primitive, every prime factor of $\Norm(w) = a^2-2b^2$ is congruent to $\pm 1$ modulo $8$, so $\left(\frac{2}{a^2-2b^2}\right) = 1$. Hence
\begin{equation}\label{gammaJacobi}
\gamma(w, z) = \left(\frac{ac+2bd}{a^2-2b^2}\right).
\end{equation} 

\subsection{Double oscillation of $\gamma(w, z)$}
We can now prove some general bilinear sum estimates that we will use to deduce Proposition~\ref{propB}. Let $\alpha = \{\alpha_{w}\}$ and $\beta = \{\beta_{z}\}$ be two sequences of complex numbers, each indexed by non-zero elements in $\ZZT$, such that
\begin{equation}\label{seqCond}
|\alpha_w|\leq \log(\Norm(w))\tau(w)\text{  and  }|\beta_z|\leq \log(\Norm(z))\tau(z)
\end{equation}
for all $w$ and $z$ in $\ZZT$. For a positive real number $X$, let $\DD(X) = \DD_0(X)$ as in \eqref{DX}. Set
$$
C := \limsup_{X\rightarrow \infty}\{u:\ (u, v)\in\DD(X)\}\cdot X^{-\frac{1}{2}}.
$$
and note that $C<\infty$. Next, for a positive real number $X$, we define the ``cone''
$$
\BB(X):=\{(u, v)\in\Omega:\ 0<u\leq CX^{\frac{1}{2}}\},
$$
where $\Omega$ is the region, defined in \eqref{defOmega}, which enumerates the totally positive elements in $\ZZT$. Hence the set of elements in $\ZZT$ enumerated by $\cup_{X>0}\BB(X) = \Omega$ is closed under multiplication. Note also that $\DD(X)\subset \BB(X)$ for every real number $X$. For a subset $\mathcal{S}$ of $\RR^2$ and an element $u+v\sqrt{2}\in\ZZT$, we will say that $u+v\sqrt{2}\in\mathcal{S}$ to mean that $(u, v)\in\mathcal{S}\cap\ZZ^2$. Finally, for positive real numbers $M$ and $N$, we define the bilinear sum
\begin{equation}\label{sumQ}
Q(M, N; \alpha, \beta): = \suma_{\substack{w \in\DD(M)}}\sum_{\substack{z \in \BB(N)}}\alpha_{w}\beta_{z}\gamma(w, z),
\end{equation}
where $\suma_w$ restricts the summation to primitive $w$. The first result we prove is a standard consequence of the Cauchy-Schwartz inequality and Lemma~\ref{keycancellation}.
\begin{lemma}\label{generalQsum1}
For every $\epsilon > 0$, there is a constant $C_{\epsilon}>0$ such that for every pair of sequences of complex numbers $\alpha = \{\alpha_{w}\}$ and $\beta = \{\beta_{z}\}$ satisfying \eqref{seqCond} and every pair of real numbers $M, N>1$, we have 
$$
\left|Q(M, N; \alpha, \beta)\right| \leq C_{\epsilon} \left(M^{\frac{1}{2}}N + M^{2}N^{\frac{3}{4}} + M^{3}N^{\frac{1}{2}}\right)(MN)^{\epsilon}.
$$
\end{lemma}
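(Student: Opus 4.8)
The plan is to estimate $Q(M,N;\alpha,\beta)$ from \eqref{sumQ} by an application of the Cauchy--Schwarz inequality in the variable $z$, arranged so that the resulting diagonal sum is exactly of the shape handled by Lemma~\ref{keycancellation}. Writing
\[
Q(M,N;\alpha,\beta)=\sum_{z\in\BB(N)}\beta_z\Bigl(\suma_{w\in\DD(M)}\alpha_w\gamma(w,z)\Bigr)
\]
and applying Cauchy--Schwarz over $z\in\BB(N)$, the hypotheses \eqref{seqCond}, together with $\#(\BB(N)\cap\ZZ^2)\ll N$ and $\Norm(z)\ll N$ for $z\in\BB(N)$, give $\sum_{z\in\BB(N)}|\beta_z|^2\ll_\epsilon N^{1+\epsilon}$. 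It therefore remains to bound
\[
T:=\sum_{z\in\BB(N)}\Bigl|\suma_{w\in\DD(M)}\alpha_w\gamma(w,z)\Bigr|^2=\suma_{w_1}\suma_{w_2}\alpha_{w_1}\overline{\alpha_{w_2}}\sum_{z\in\BB(N)}\gamma(w_1,z)\gamma(w_2,z),
\]
where $w_1,w_2$ run over primitive elements of $\DD(M)$ and we have used that $\gamma$ takes values in $\{0,\pm1\}$, hence is real. Since $\gamma(w_1,\cdot)\gamma(w_2,\cdot)$ is a function on $\ZZT/(W)$ with $W=\Norm(w_1)\Norm(w_2)$, the inner sum over a complete residue system modulo $(W)$ is precisely what Lemma~\ref{keycancellation} evaluates.

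The next step is to complete the inner sum over $z$. The ideal $(W)=W\ZZT$ corresponds to the sublattice $W\ZZ^2\subset\ZZ^2$ of index $W^2$, and Davenport's Lemma~\ref{dav} applied to the triangle $\BB(N)$ (which satisfies conditions (1)--(2) with $h=O(1)$, has volume $\asymp N$ and boundary of size $\ll N^{1/2}$) gives, for each $z_0\bmod(W)$,
\[
\#\{z\in\BB(N):\ z\equiv z_0\bmod(W)\}=\frac{\Vol(\BB(N))}{W^2}+O\!\Bigl(1+\frac{N^{1/2}}{W}\Bigr).
\]
Summing $\gamma(w_1,z_0)\gamma(w_2,z_0)$ against this count, the main term contributes $\Vol(\BB(N))\,W^{-2}S(w_1,w_2)$ with $S(w_1,w_2)=\sum_{z_0\bmod(W)}\gamma(w_1,z_0)\gamma(w_2,z_0)$, and the error is $\ll W^2(1+N^{1/2}/W)=W^2+WN^{1/2}$ (bounding the number of nonzero $\gamma$-values trivially by $W^2$). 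By Lemma~\ref{keycancellation}, $S(w_1,w_2)=0$ unless $W$ is a perfect square, and in all cases $|S(w_1,w_2)|\le W\varphi(r)\varphi(W/r)\le W^2$; hence the total main-term contribution to $T$ is
\[
\ll N\cdot\max_{w_1,w_2}|\alpha_{w_1}\alpha_{w_2}|\cdot\#\{(w_1,w_2)\in(\DD(M)\cap\ZZ^2)^2:\ \Norm(w_1)\Norm(w_2)\in\square\}.
\]
I expect the pair count here to be the main obstacle. Grouping the $w$ by the squarefree part $s$ of $\Norm(w)$, and using that each ideal of norm $n\le M$ has a unique generator in $\DD$ (Proposition~\ref{propFD}) while there are $\ll_\epsilon n^\epsilon$ ideals of norm $n$, one obtains $\#\{w\in\DD(M):\ \mathrm{sqf}(\Norm w)=s\}\ll_\epsilon M^\epsilon (M/s)^{1/2}$, and summing the squares over squarefree $s\le M$ yields $\ll_\epsilon M^{1+\epsilon}$. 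Combined with $|\alpha_w|\ll_\epsilon M^\epsilon$ from \eqref{seqCond}, the main-term contribution to $T$ is $\ll_\epsilon M^{1+\epsilon}N$.

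Finally, for the error term I would use $\sum_{w\in\DD(M)}\Norm(w)\ll M^2$ and $\sum_{w\in\DD(M)}\Norm(w)^2\ll M^3$, which bound the error contribution to $T$ by $\ll_\epsilon M^\epsilon\bigl((\sum_w\Norm(w))^2N^{1/2}+(\sum_w\Norm(w)^2)^2\bigr)\ll_\epsilon M^\epsilon(M^4N^{1/2}+M^6)$. Collecting everything, $T\ll_\epsilon(MN+M^4N^{1/2}+M^6)(MN)^\epsilon$, and multiplying by the Cauchy--Schwarz factor $N^{1+\epsilon}$ and taking square roots produces exactly
\[
|Q(M,N;\alpha,\beta)|\ll_\epsilon\bigl(M^{1/2}N+M^2N^{3/4}+M^3N^{1/2}\bigr)(MN)^\epsilon
\]
after relabelling $\epsilon$. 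Everything apart from the perfect-square pair count is routine completion of sums and divisor-function bookkeeping; the one place where genuine arithmetic input enters is Lemma~\ref{keycancellation}, whose ``$W$ a square'' alternative is precisely what upgrades the trivial bound $M^2$ for the number of relevant pairs to $M^{1+\epsilon}$.
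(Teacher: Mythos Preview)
Your proposal is correct and follows essentially the same approach as the paper: Cauchy--Schwarz in $z$, completion of the inner sum modulo $W=\Norm(w_1w_2)$ via Davenport's lemma, application of Lemma~\ref{keycancellation}, and the count $\#\{(w_1,w_2):\Norm(w_1)\Norm(w_2)=\square\}\ll_\epsilon M^{1+\epsilon}$. Your bookkeeping differs only in cosmetic ways (you separate the error terms from all pairs rather than carrying them along with the square-pair terms, and you spell out the squarefree-kernel argument for the pair count that the paper leaves implicit), but the arithmetic content and the final estimate are identical.
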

\begin{proof}
Let $Q(M, N) = Q(M, N; \alpha, \beta)$. Applying the Cauchy-Schwarz inequality to the sum over $z$ and expanding the square, we obtain
$$
|Q(M, N)|^2\leq \sum_{\substack{z\in\BB(N)}}|\beta_z|^2 \suma_{\substack{w_1\in\DD(M)}}\suma_{\substack{w_2\in\DD(M)}}\alpha_{w_1}\overline{\alpha_{w_2}} R(N; w_1, w_2),
$$
where
$$
R(N; w_1, w_2) = \sum_{\substack{z\in\BB(N)}}\gamma(w_1, z)\gamma(w_2, z).
$$
Since $\beta_z$ is bounded in modulus by $N^{\epsilon}$, Lemma~\ref{dav} applied to $L = \mathrm{Id}$ gives
\begin{equation}\label{estBeta}
\sum_{\substack{z\in\BB(N)}}|\beta_z|^2\ll_{\epsilon} N^{\epsilon}\Vol(\BB(N)) + N^{\epsilon}O(\Vol(\partial(\BB(N)))+ 1) \ll_{\epsilon} N^{1+\epsilon}.
\end{equation}
Next, recall that $\gamma(w, z_1) = \gamma(w, z_2)$ whenever $z_1\equiv z_2\bmod \Norm(w)$. Hence we can split the inner sum over $z$ into residue classes modulo $W$. More precisely, if $\zeta = \zeta_1 + \zeta_2\sqrt{2}$, we define $L$ to be the linear transformation $L = W\cdot\mathrm{Id}+(\zeta_1, \zeta_2):\RR^2\rightarrow\RR^2$. Then Lemma~\ref{dav} gives
$$
\begin{array}{ccl}
\displaystyle{R(N; w_1, w_2)} & = & \displaystyle{\sum_{\substack{\zeta\bmod W}}\gamma(w_1, \zeta)\gamma(w_2, \zeta)\sum_{\substack{z\in\BB(N)\\ z\equiv \zeta\bmod W}} 1 } \\
& = & \displaystyle{\sum_{\substack{\zeta\bmod W}}\gamma(w_1, \zeta)\gamma(w_2, \zeta)\left(\frac{2C^2N}{W^2}+O\left(\frac{N^{\frac{1}{2}}}{W} + 1 \right)\right) } \\
& = &\displaystyle{ \frac{2C^2N}{W^2} \sum_{\substack{\zeta\bmod W}}\gamma(w_1, z)\gamma(w_2, z)  + O\left(W^2\left(\frac{N^{\frac{1}{2}}}{W}+1\right)\right)}.
\end{array}
$$
Now set $r = \Norm(\gcd((w_1), (\ow_2)))$. Note that $W\varphi(r)\varphi(W/r)\leq W^2$. Then using Lemma~\ref{keycancellation}, we obtain the estimate
$$
R(N; w_1, w_2)\ll
\begin{cases}
N + WN^{\frac{1}{2}} + W^2 & \text{if }W\text{ and }r\text{ are squares} \\
WN^{\frac{1}{2}} + W^2 & \text{otherwise.}
\end{cases}
$$
By unique factorization in $\ZZT$, the number of primitive elements $w\in\DD$ such that $\Norm(w) = n$ is at most $2^{\omega(n)}\leq\tau(n)\ll_{\epsilon}n^{\epsilon}$. Hence, using the bound $W\ll M^2$ and setting $m_1 = \Norm(w_1)$ and $m_2 = \Norm(w_2)$, we get
\begin{align*}
&|Q(M, N)|^2 \\
& \ll_{\epsilon} N\left(\sumsum_{\substack{m_1, m_2 \leq M \\ m_1m_2=\square}}\left(N + M^2N^{\frac{1}{2}} + M^4\right)+M^2\left(M^2N^{\frac{1}{2}}+M^4\right)\right)(MN)^{\epsilon}.
\end{align*}
We deduce that
$$
Q(M, N)\ll_{\epsilon} \left(M^{\frac{1}{2}}N + M^{\frac{3}{2}}N^{\frac{3}{4}} + M^{\frac{5}{2}}N^{\frac{1}{2}} + M^{2}N^{\frac{3}{4}} + M^{3}N^{\frac{1}{2}}\right)(MN)^{\epsilon},
$$
and the inequality $M\geq 1$ now implies the desired result. 
\end{proof}
The following method, which appears in \cite{FI1}, exploits the multiplicativity of $\gamma(w, z)$ in $z$ to improve the quality of the estimate when $M$ and $N$ are close to each other.
\begin{lemma}\label{generalQsum2}
For every $\epsilon > 0$, there is a constant $C_{\epsilon}>0$ such that for every pair of sequences of complex numbers $\alpha = \{\alpha_{w}\}$ and $\beta = \{\beta_{z}\}$ satisfying \eqref{seqCond} and every pair of real numbers $M, N>1$, we have 
$$
\left|Q(M, N; \alpha, \beta)\right| \leq C_{\epsilon} \left(M^{\frac{11}{12}}N + M^{\frac{7}{6}}N^{\frac{3}{4}} + M^{\frac{4}{3}}N^{\frac{1}{2}}\right)(MN)^{\epsilon}.
$$
\end{lemma}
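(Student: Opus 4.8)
The plan is to follow the ``$q$-analogue of van der Corput's method'' of \cite[Section~21]{FI1}. The estimate of Lemma~\ref{generalQsum1} is wasteful when $M$ is large relative to $N$: after the Cauchy--Schwarz step there, the $z$-sum is completed modulo $\Norm(w_1w_2)$, a modulus of size up to $M^2$, even though $z$ runs over a range of length only $\asymp N$. The idea is to shrink this modulus by using the multiplicativity of $\gamma$ in its second argument --- the identity \eqref{gmult} --- to peel off part of the $z$-variable, and then to use the reciprocity law of Lemma~\ref{reciprocitylem} to reinterpret the remaining part so that $w$ lands in the ``modulus slot''.

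Concretely, I would fix a parameter $L\in[1,N]$ (optimized at the end) and, for each totally positive primitive $z\in\BB(N)$, choose a factorization $z=z_1z_2$ with $z_1,z_2$ primitive, $\Norm(z_1)\asymp L$, $\Norm(z_2)\asymp N/L$, and $z_1z_2$ still primitive; the $z$ admitting no divisor of norm near $L$ form a sparse set whose contribution I would dispose of by a dyadic version of Lemma~\ref{generalQsum1}, and the common-factor bookkeeping between $z_1,z_2$ and their conjugates is handled as in Lemmas~\ref{primitive4} and \ref{primitive5} and the decomposition \eqref{primitive6}, at a cost of only $(MN)^{\epsilon}$. Since $\mul(w)=\pm 1$ for primitive $w$, \eqref{gmult} gives $\gamma(w,z_1z_2)=\mul(w)\gamma(w,z_1)\gamma(w,z_2)$, and applying Lemma~\ref{reciprocitylem} to the factor $\gamma(w,z_2)$ rewrites this as $\gamma(w,z)=\pm\,\gamma(w,z_1)\,\gamma(z_2,w)$ up to explicit correction factors built from $\mul$ and from quadratic-reciprocity signs that depend only on residues modulo $8$ (here one uses that all elements in sight are odd, together with the description \eqref{gammaJacobi} of $\gamma$ as a Jacobi symbol). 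The payoff is that $\gamma(z_2,w)$ and the correction factors are now \emph{periodic in $w$ modulo $\Norm(z_2)\asymp N/L$}, while $\gamma(w,z_1)$ depends on $z_1$ only modulo $\Norm(w)\le M$.

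With this in hand I would rearrange the triple sum over $w,z_1,z_2$ and apply Cauchy--Schwarz a second time, pulling out the short variable $z_1$ and keeping $(w,z_2)$ together; opening the square produces inner sums of the shape $\sum_{z_1}\gamma(w_1,z_1)\gamma(w_2,z_1)$ that are exactly of the type evaluated by Lemma~\ref{keycancellation}, while the outer $w$-variable now only needs to be completed modulo $\Norm(z_2)\asymp N/L$ rather than modulo $M^2$. As in the proof of Lemma~\ref{generalQsum1}, the completed inner sum contributes a genuine main term only when $\Norm(w_1w_2)$ is a square, and otherwise only error terms survive, but these are smaller because the $z_1$-range of length $\asymp L$ meets the modulus $\Norm(w_1w_2)$ more favourably. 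Assembling the main-term contribution, the off-diagonal error, and the contribution of the rough $z$, and then choosing $L$ to balance the resulting powers of $M$, $N$ and $L$, produces the three terms $M^{11/12}N$, $M^{7/6}N^{3/4}$ and $M^{4/3}N^{1/2}$ (up to $(MN)^{\epsilon}$).

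The main obstacle is the off-diagonal analysis in the second Cauchy--Schwarz step: one must show that the pairs $(w_1,w_2)$ with $\Norm(w_1w_2)$ a square, or with $\gcd((w_1),(\overline{w_2}))$ large, are rare enough that their total contribution stays within the claimed bound, and --- crucially --- one must verify that the extra Cauchy--Schwarz does not merely recover the weaker estimate of Lemma~\ref{generalQsum1}. The genuine gain comes from having arranged that the moduli appearing in the completed sums have size $\min(N/L,M)$ rather than $M^2$, so $L$ must be chosen to exploit precisely the regime in which $M$ dominates a suitable power of $N$; a secondary technical point is to track the explicit $\mul$-factors and reciprocity signs so that none of them interferes with the cancellation.
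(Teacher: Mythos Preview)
Your approach is substantially more elaborate than what the paper actually does, and you have not carried it far enough to see whether it yields the stated exponents. The paper's proof is three lines: apply H\"older's inequality with exponents $(6/5,6)$,
\[
|Q(M,N)|^{6}\;\le\;\Bigl(\suma_{w}|\alpha_{w}|^{6/5}\Bigr)^{5}\suma_{w}\Bigl|\sum_{z}\beta_{z}\gamma(w,z)\Bigr|^{6},
\]
expand the sixth power using only the multiplicativity relation \eqref{gmult}, so that $\prod_{i=1}^{6}\gamma(w,z_{i})=\mul(w)^{5}\gamma(w,z_{1}\cdots z_{6})$, and recognise the resulting sum as another bilinear sum $Q(M,N';\alpha',\beta')$ with $\alpha'_{w}=\mul(w)^{5}$, $\beta'_{z}$ a six-fold convolution, and $N'\asymp N^{6}$. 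Feeding this back into Lemma~\ref{generalQsum1} and taking sixth roots gives exactly $M^{11/12}N+M^{7/6}N^{3/4}+M^{4/3}N^{1/2}$. No factorisation $z=z_{1}z_{2}$, no parameter $L$, no reciprocity, no second Cauchy--Schwarz.

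Two specific points. First, the reciprocity law (Lemma~\ref{reciprocitylem}) is \emph{not} used in this lemma at all; it enters only in the next step, Lemma~\ref{generalQsum3}, where the roles of $M$ and $N$ are genuinely swapped. By invoking it here you are conflating the two lemmas. Second, your Cauchy--Schwarz step (``pulling out the short variable $z_{1}$ and keeping $(w,z_{2})$ together'') is not clearly set up: after the factorisation the weight $\beta_{z}$ still couples $z_{1}$ and $z_{2}$, and you never say how the optimisation over $L$ actually produces the three exponents $11/12$, $7/6$, $4/3$. The paper's H\"older-with-exponent-$6$ trick is precisely the mechanism that manufactures those exponents cleanly---it is the same device as in \cite[Lemma~21.2]{FI1}---and it bypasses all the off-diagonal bookkeeping you flag as ``the main obstacle.''
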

\begin{proof}
Let $Q(M, N) = Q(M, N; \alpha, \beta)$. We apply H\a"{o}lder's inequality to get
\begin{equation}\label{Q6}
|Q(M, N)|^6 \leq \left(\suma_w|\alpha_w|^{\frac{6}{5}}\right)^5\suma_w\left|\sum_z\beta_z\gamma(w, z)\right|^6.
\end{equation}
By \eqref{gmult}, we can write the second factor above as
\begin{equation}\label{secondfactor}
\suma_w\left|\sum_z\beta_z\gamma(w, z)\right|^6 =: \suma_{\substack{w\in\DD(M)}}\sum_{\substack{z\in\Omega}}\alpha'_w\beta'_z\gamma(w, z),
\end{equation}
where $\alpha'_w = \mul(w)^5$ and
$$
\beta'_z = \sum_{\substack{z_1\cdots z_6 = z \\ z_1, \ldots, z_6 \in\BB(N)}}\beta_{z_1}\overline{\beta_{z_2}}\cdots\beta_{z_{5}}\overline{\beta_{z_6}}.
$$
Note that $\beta_z$ is supported on $z\in\BB(27C^6N^3)$. Now using Lemma~\ref{generalQsum1} to estimate the sum \eqref{secondfactor}, and substituting back into \eqref{Q6}, we obtain the desired result.
\end{proof}
The final step is to exploit the symmetry of the symbol $\gamma(w, z)$ coming from its reciprocity law. Suppose that $w = a + b\sqrt{2}\in\ZZT$ is primitive and totally positive. By \eqref{gammaJacobi} and the law of quadratic reciprocity, we have
$$
\mul(w) = \gamma(w, 1) = \left(\frac{a}{a^2-2b^2}\right) = (-1)^{\frac{a-1}{2}\cdot\frac{a^2-2b^2-1}{2}}\left(\frac{-2}{a}\right),
$$
and so $\mul(w)\in\{\pm 1\}$ depends only on the residue class of $w$ modulo $8\ZZT$. Lemma~\ref{reciprocitylem} then implies that for every pair of odd and totally positive $w, z\in \ZZT$, we have
$$
\gamma(w, z)=\delta\cdot \gamma(z, w),
$$
where $\delta = \delta(w\bmod 8, z\bmod 8) := \mul(wz) \in\{\pm 1\}$ depends only on the congruence classes of $w$ and $z$ modulo $8\ZZT$. We are thus led to decompose the sum $Q(M, N; \alpha, \beta)$ as
$$
Q(M, N; \alpha, \beta) = \sumsum_{\substack{w_0\bmod 8 \\ z_0\bmod 8}}Q(M, N; \alpha(w_0), \beta(z_0)),
$$
where $\alpha(w_0)$ and $\beta(z_0)$ are sequences indexed by non-zero elements of $\ZZT$ defined by
$$
\alpha(w_0)_w := \alpha_w\cdot\ONE(w\equiv w_0\bmod 8)
$$
and
$$
\beta(z_0)_z := \beta_z\cdot\ONE(z\equiv z_0\bmod 8).
$$
Here $\ONE(P)$ is the indicator function of a property $P$. We will now prove
\begin{lemma}\label{generalQsum3}
For every $\epsilon > 0$, there is a constant $C_{\epsilon}>0$ such that for every pair of sequences of complex numbers $\alpha = \{\alpha_{w}\}$ and $\beta = \{\beta_{z}\}$ such that \eqref{seqCond} holds and such that $\beta$ is supported on primitive $z\in\DD(N)$, and for every pair of real numbers $M, N>1$, we have 
$$
\left|Q(M, N; \alpha, \beta)\right| \leq C_{\epsilon} \left(M+N\right)^{\frac{1}{12}}(MN)^{\frac{11}{12}+\epsilon}.
$$
\end{lemma}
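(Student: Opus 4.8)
The plan is to deduce the lemma from Lemma~\ref{generalQsum2} by treating the two ranges $M\le N$ and $M>N$ separately, using the reciprocity law for $\gamma$ (established just before the statement) to reduce the second range to the first.

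Suppose first that $M\le N$. Then Lemma~\ref{generalQsum2} applies directly and gives
$$
|Q(M,N;\alpha,\beta)|\ll_{\epsilon}\bigl(M^{\frac{11}{12}}N+M^{\frac{7}{6}}N^{\frac{3}{4}}+M^{\frac{4}{3}}N^{\frac{1}{2}}\bigr)(MN)^{\epsilon}.
$$
An elementary comparison shows each term is $\ll M^{\frac{11}{12}}N$ in this range: $M^{\frac{7}{6}}N^{\frac{3}{4}}\le M^{\frac{11}{12}}N$ is equivalent to $M^{\frac14}\le N^{\frac14}$, and $M^{\frac{4}{3}}N^{\frac{1}{2}}\le M^{\frac{11}{12}}N$ is equivalent to $M^{\frac{5}{12}}\le N^{\frac12}$, both of which follow from $1<M\le N$. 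Since $M\le N$ gives $(M+N)^{\frac{1}{12}}\asymp N^{\frac{1}{12}}$ and $N^{\frac{1}{12}}(MN)^{\frac{11}{12}}=M^{\frac{11}{12}}N$, the case $M\le N$ is done.

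Now suppose $M>N$. Here I would use the reciprocity law. Recall $Q(M,N;\alpha,\beta)=\sum_{w_0,z_0\bmod 8}Q(M,N;\alpha(w_0),\beta(z_0))$, and that for odd totally positive $w,z$ one has $\gamma(w,z)=\delta(w_0,z_0)\gamma(z,w)$ with $\delta\in\{\pm1\}$ depending only on $w,z$ modulo $8\ZZT$. Every $w$ occurring in $Q$ is primitive, hence odd, and totally positive; and by hypothesis $\beta$ is supported on primitive (hence odd) totally positive $z\in\DD(N)$. Hence in each piece we may replace $\gamma(w,z)$ by $\delta(w_0,z_0)\gamma(z,w)$. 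Writing $\widehat{\alpha}(w_0)$ for the sequence $\alpha_w\cdot\ONE(w\equiv w_0\bmod 8)\cdot\ONE(w\text{ primitive})\cdot\ONE(w\in\DD(M))$, which still satisfies \eqref{seqCond} since it is obtained from $\alpha$ by zeroing out entries, and using $\DD(M)\subset\BB(M)$, each piece equals exactly $\delta(w_0,z_0)\,Q(N,M;\beta(z_0),\widehat{\alpha}(w_0))$ — a bilinear sum in which the outer variable $z$ runs over primitive elements of $\DD(N)$ and the inner variable $w$ over $\BB(M)$. Applying Lemma~\ref{generalQsum2} to each of the $O(1)$ pieces with the roles of $M$ and $N$ interchanged, and summing, gives
$$
|Q(M,N;\alpha,\beta)|\ll_{\epsilon}\bigl(N^{\frac{11}{12}}M+N^{\frac{7}{6}}M^{\frac{3}{4}}+N^{\frac{4}{3}}M^{\frac{1}{2}}\bigr)(MN)^{\epsilon}.
$$
Then $N<M$ gives $(M+N)^{\frac{1}{12}}\asymp M^{\frac{1}{12}}$ and $M^{\frac{1}{12}}(MN)^{\frac{11}{12}}=MN^{\frac{11}{12}}$, and the same elementary inequalities as above (with $M$ and $N$ exchanged) show the second and third terms are $\ll MN^{\frac{11}{12}}$. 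This finishes the case $M>N$, and hence the lemma.

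The only real subtlety is bookkeeping rather than analysis: one must verify that after the swap the resulting object is genuinely of the shape $Q(N,M;\cdot,\cdot)$, which is precisely why $\widehat{\alpha}(w_0)$ must be cut down to primitive elements of $\DD(M)$, and why the hypothesis that $\beta$ is supported on primitive $z\in\DD(N)$ is used — it ensures both that the reciprocity law is applicable to every term and that the outer variable of the swapped sum ranges over exactly the primitive elements of $\DD(N)$. Beyond this, everything reduces to the term-by-term size comparisons above, so there is no genuine analytic obstacle past Lemma~\ref{generalQsum2} itself.
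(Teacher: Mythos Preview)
Your proof is correct and follows essentially the same route as the paper: apply Lemma~\ref{generalQsum2} directly to get the bound with $M^{11/12}N$ as the dominant term, use the reciprocity $\gamma(w,z)=\delta(w_0,z_0)\gamma(z,w)$ after splitting into congruence classes modulo $8$ to obtain the symmetric bound with $MN^{11/12}$ as the dominant term, and combine. Your bookkeeping is in fact slightly more careful than the paper's, which writes $Q(N,M;\beta(z_0),\alpha(w_0))$ without explicitly cutting $\alpha(w_0)$ down to $\DD(M)$ as you do with $\widehat{\alpha}(w_0)$.
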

\begin{proof}
It suffices to establish the desired estimate for the sequences $\alpha(w_0)$ and $\beta(z_0)$ for each pair of congruence classes $w_0$ and $z_0$ modulo $8\ZZT$. So fix congruence classes $w_0$ and $z_0$ modulo $8\ZZT$. Note that the sum $Q(N, M; \beta(z_0), \alpha(w_0))$ satisfies the assumptions of Lemma~\ref{generalQsum2}. Thus, applying Lemma~\ref{generalQsum2} gives the estimate
\begin{equation}\label{estQsum31}
Q(M, N; \alpha(w_0), \beta(z_0)) \ll_{\epsilon} \left(M^{\frac{11}{12}}N + M^{\frac{7}{6}}N^{\frac{3}{4}} + M^{\frac{4}{3}}N^{\frac{1}{2}}\right)(MN)^{\epsilon}.
\end{equation}
As discussed above, by Lemma~\ref{reciprocitylem}, we have
$$
Q(M, N; \alpha(w_0), \beta(z_0)) = \delta(w_0, z_0)\cdot Q(N, M; \beta(z_0), \alpha(w_0)).
$$
Applying Lemma~\ref{generalQsum2} to the right-hand side above, we also get
\begin{equation}\label{estQsum32}
Q(M, N; \alpha(w_0), \beta(z_0)) \ll_{\epsilon} \left(N^{\frac{11}{12}}M + N^{\frac{7}{6}}M^{\frac{3}{4}} + N^{\frac{4}{3}}M^{\frac{1}{2}}\right)(MN)^{\epsilon}.
\end{equation}
Finally, taking the minimum of the terms in \eqref{estQsum31} and \eqref{estQsum32} in the appropriate ranges, we obtain
$$
Q(M, N; \alpha(w_0), \beta(z_0)) \ll_{\epsilon} \left(M^{\frac{11}{12}}N+N^{\frac{11}{12}}\right)(MN)^{\epsilon}, 
$$
and then the inequality
$$
M^{\frac{1}{12}}+N^{\frac{1}{12}}\leq 2\max\{M, N\}^{\frac{1}{12}}\leq 2(M+N)^{\frac{1}{12}}
$$
gives the desired result.
\end{proof}
\subsection{Twisted multiplicativity of governing symbols}
Recall that if $u+v\sqrt{2}$ is a totally positive odd element of $\ZZT$, we defined the governing symbol $[u+v\sqrt{2}]$ to be
$$[u+v\sqrt{2}] = \left(\frac{v}{u}\right).$$
Thus $[u+v\sqrt{2}] = 0$ whenever $u+v\sqrt{2}$ is not primitive.
\\\\
A key feature of the governing symbol $[\cdot]$ which leads to significant cancellation in \eqref{sumB} is that $[\cdot]$ is \textit{not} multiplicative, i.e., the relation $[wz] = [w][z]$ does \textit{not} hold for all totally positive $w$ and $z$. Instead, the equation above becomes essentially valid when twisted by $\gamma(w, z)$. We now state our result more precisely. 
\\\\
We now introduce notation that will simplify the subsequent arguments. Suppose that $f_1$ and $f_2$ are functions $\ZZ^r\rightarrow\CC$. For $x\in \ZZ^r$, we write $f_1\sim f_2$ (or more conveniently $f_1(x)\sim f_2(x)$) if there exists a function $\delta: \ZZ^r\rightarrow\{\pm 1\}$ such that $\delta$ factors though $(\ZZ/16\ZZ)^r$, i.e., the value of $\delta(x)$ depends only on the congruence classes of the coordinates of $x$ modulo $16$, and such that
$$
f_1(x) = \delta(x)f_2(x)
$$
for all $x\in\ZZ^r$. For instance, $[u+v\sqrt{2}]_{\phi, \psi}\sim[u+v\sqrt{2}]_{\phi', \psi'}$ for any four Dirichlet characters $\phi$, $\psi$, $\phi'$, $\psi'$ modulo $16$.
\\\\
The following proposition is analogous to \cite[Lemma 20.1, p.\ 1021]{FI1}. It is perhaps the most surprising part of the proof of Proposition~\ref{propB}.    
\begin{prop}\label{almostMult}
Let $w = a+b\sqrt{2}$ and $z = c+d\sqrt{2}$ be two primitive, totally positive, odd elements of $\ZZT$. Then
$$
[wz] \sim [w][z]\gamma(w, z).
$$
\end{prop}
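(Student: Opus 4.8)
The plan is to reduce the identity $[wz]\sim[w][z]\gamma(w,z)$ to a statement about Jacobi symbols by writing out every symbol explicitly in terms of the coordinates $a,b,c,d$, and then to prove that statement by repeated application of quadratic reciprocity, keeping careful track of the sign factors that depend only on residues modulo $16$. Write $w=a+b\sqrt2$, $z=c+d\sqrt2$, and $wz = (ac+2bd)+(ad+bc)\sqrt2$. By definition $[w]=\left(\frac{b}{a}\right)$, $[z]=\left(\frac{d}{c}\right)$, $[wz]=\left(\frac{ad+bc}{ac+2bd}\right)$, and by \eqref{gammaJacobi} (which applies since $w$ is primitive and totally positive) $\gamma(w,z)=\left(\frac{ac+2bd}{a^2-2b^2}\right)$. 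Since $\sim$ allows us to discard any factor depending only on residues mod $16$, and since $a^2-2b^2=\Norm(w)$ differs from a fixed product of prime-power norms only in ways controlled by reciprocity, the key will be to manipulate $\left(\frac{ad+bc}{ac+2bd}\right)$ into $\left(\frac{b}{a}\right)\left(\frac{d}{c}\right)\left(\frac{ac+2bd}{a^2-2b^2}\right)$ up to such a factor.

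The main computational step is to exploit the congruence $ac+2bd \equiv c(a^2-2b^2) \cdot (\text{unit}) $ type relations modulo the moduli appearing in the symbols. Concretely, I would first use that $ad+bc$ and $ac+2bd$ satisfy $a(ad+bc) = d(a^2-2b^2) + b(ac+2bd)$, so that modulo any prime dividing $ac+2bd$ we can replace $ad+bc$ by $a^{-1}d(a^2-2b^2)$; this lets us split $\left(\frac{ad+bc}{ac+2bd}\right)$ (up to controlled signs and a factor $\left(\frac{a}{ac+2bd}\right)\left(\frac{d}{ac+2bd}\right)$) into $\left(\frac{a^2-2b^2}{ac+2bd}\right)$ times lower-order symbols. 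Then apply quadratic reciprocity to flip $\left(\frac{a^2-2b^2}{ac+2bd}\right)$ to $\left(\frac{ac+2bd}{a^2-2b^2}\right) = \gamma(w,z)$, absorbing the reciprocity sign (which depends only on $a^2-2b^2$ and $ac+2bd$ modulo $4$, hence on $a,b,c,d$ modulo a fixed power of $2$) into $\delta$. A symmetric manipulation using $c(ad+bc) = b(c^2-2d^2)+d(ac+2bd)$ — together with the fact that $c^2-2d^2=\Norm(z)$ is again a norm whose prime factors are $\equiv\pm1\bmod 8$, so $\left(\frac{2}{c^2-2d^2}\right)=1$ — should produce the factors $\left(\frac{b}{a}\right)$ and $\left(\frac{d}{c}\right)$ after further reciprocity flips. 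Throughout, I will use primitivity of $w$, $z$, and the hypothesis that $wz$ need only be handled up to the primitive part (primitivity of $w$ and $z$ guarantees $\gcd(a,b)=\gcd(c,d)=1$, and $\gcd(a, ac+2bd)=\gcd(a,2bd)$ which is a power of $2$ times a factor coprime to the relevant odd moduli).

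The hard part will be bookkeeping: quadratic reciprocity introduces a sign $(-1)^{\frac{m-1}{2}\cdot\frac{n-1}{2}}$ each time, and one must verify that the \emph{total} accumulated sign, as well as every auxiliary symbol like $\left(\frac{a}{ac+2bd}\right)$, $\left(\frac{2}{\cdot}\right)$, $\left(\frac{-1}{\cdot}\right)$, and the $\left(\frac{e_u}{u}\right)$-type factors arising when pulling out $\gcd$'s, ultimately depends only on $a,b,c,d$ modulo $16$ — never on their full size. The role of "$16$" (rather than a smaller power of $2$) is exactly that the $\left(\frac{2}{\cdot}\right)$ factors and the reciprocity signs, after all the substitutions, stabilize mod $16$; verifying this is where I expect the bulk of the casework, and it is presumably why the proposition is "the most surprising part." I would organize the verification by reducing modulo $16$ at the end and checking finitely many cases (or, more cleanly, by isolating each reciprocity flip as a lemma of the form "$\left(\frac{m}{n}\right)\left(\frac{n}{m}\right)$ depends only on $m,n\bmod 16$ when $m,n$ are odd and $m$ or $n$ is a norm from $\ZZT$"), so that the relation $[wz]\sim[w][z]\gamma(w,z)$ follows by assembling these pieces with all residue-dependent signs collected into the single function $\delta$.
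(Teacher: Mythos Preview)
Your overall strategy --- express everything as Jacobi symbols, use the algebraic identity $a(ad+bc)=d(a^2-2b^2)+b(ac+2bd)$ to extract $\left(\frac{a^2-2b^2}{ac+2bd}\right)$, then flip by reciprocity to get $\gamma(w,z)$ --- is the same as the paper's. But there is a genuine gap at the step where you invert $a$ modulo $ac+2bd$. You write that $\gcd(a,ac+2bd)=\gcd(a,2bd)$ is ``a power of $2$ times a factor coprime to the relevant odd moduli.'' This is false: since $a$ is odd and $\gcd(a,b)=1$, we have $\gcd(a,2bd)=\gcd(a,d)$, and nothing prevents $\rho:=\gcd(a,d)$ from being an arbitrary odd integer. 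In particular $a$ is \emph{not} invertible modulo $ac+2bd$ in general, and your substitution $ad+bc\equiv a^{-1}d(a^2-2b^2)$ is not valid as stated.

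The paper deals with this by pulling $\rho$ out of the modulus first, writing $a=\rho a_1$, $d=\rho d_1$, and working modulo $a_1c+2bd_1$ where $a_1$ \emph{is} invertible. After all the reciprocity flips, one is left with a sign of the form $(-1)^{\nu_1+\nu_2+\nu_3}\left(\frac{-1}{\rho}\right)$ where each $\nu_i$ is built from quantities like $a_1=a/\rho$, $d_1=d/\rho$, $r_1=a_1c+2bd_1$ --- all of which depend on $\rho$, which is \emph{not} determined by $a,b,c,d\bmod 16$. The actual content of the proposition, and the reason the paper calls it ``the most surprising part,'' is the verification that this $\rho$-dependence cancels: the combined sign turns out to depend only on $a,b,c,d\bmod 16$ even though its individual pieces do not. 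Your plan to ``reduce modulo $16$ at the end and check finitely many cases'' cannot work directly, because for fixed residues of $a,b,c,d$ there are infinitely many possible values of $\rho$; one must instead do casework on the parities of $b$ and $d$ and on $c\bmod 4$ and show algebraically that the $\rho$-terms drop out in each case.
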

\begin{proof}
When $wz$ is not primitive, then $[wz] = 0$ and $\gamma(w, z) = 0$, and so the result follows. Hence we may assume that $wz$ is primitive.
\\\\
First note that
$$
wz = (ac+2bd)+(ad+bc)\sqrt{2}.
$$
We set $\rho = (a, d)$ and define $a_1$ and $d_1$ by the equalities $a = \rho a_1$ and $d = \rho d_1$, respectively. Then
$$
[wz] = \left(\frac{ad+bc}{ac+2bd}\right) = \left(\frac{ad+bc}{\rho}\right)\left(\frac{ad+bc}{a_1c+2bd_1}\right),
$$
and since $\rho$ divides $ad$, the above simplifies to
$$
[wz] = \left(\frac{bc}{\rho}\right)\left(\frac{ad+bc}{a_1c+2bd_1}\right).
$$
Now, since $w$ is primitive, $a_1$ is relatively prime to $b$ and hence also to $a_1c+2bd_1$. Hence we may write
$$
c\equiv -2bd_1/a_1 \pmod{a_1c+2bd_1},
$$
so that the second factor in the expression above becomes
$$
\left(\frac{ad+bc}{a_1c+2bd_1}\right) = \left(\frac{ad-2b^2d_1/a_1}{a_1c+2bd_1}\right) = \left(\frac{a_1d_1}{a_1c+2bd_1}\right)\left(\frac{\rho^2 - 2b^2/a_1^2}{a_1c+2bd_1}\right).
$$
As $a^2-2b^2 = a_1^2(\rho^2-2b^2/a_1^2)$, we deduce that
$$
[wz]\sim \left(\frac{bc}{\rho}\right)\left(\frac{a_1d_1}{a_1c+2bd_1}\right)\left(\frac{a^2-2b^2}{a_1c+2bd_1}\right).
$$
We write the last factor in the expression above as
$$
\left(\frac{a^2-2b^2}{a_1c+2bd_1}\right) = \left(\frac{a^2-2b^2}{\rho}\right)\left(\frac{a^2-2b^2}{ac+2bd}\right),
$$
and use the fact that
$$
\left(\frac{a^2-2b^2}{\rho}\right) =\left(\frac{-2b^2}{\rho}\right)= \left(\frac{-2}{\rho}\right)
$$
to conclude that
$$
[wz]\sim \left(\frac{-2bc}{\rho}\right)\left(\frac{a_1d_1}{a_1c+2bd_1}\right)\left(\frac{a^2-2b^2}{ac+2bd}\right).
$$
The law of quadratic reciprocity implies that
$$
\left(\frac{a^2-2b^2}{ac+2bd}\right)\sim \left(\frac{ac+2bd}{a^2-2b^2}\right),
$$
so that, by \eqref{gammaJacobi},
$$
[wz]\sim \left(\frac{-2bc}{\rho}\right)\left(\frac{a_1d_1}{a_1c+2bd_1}\right)\gamma(w, z)
$$
We again use the law of quadratic reciprocity to treat the middle term above. We get
$$
\left(\frac{a_1}{a_1c+2bd_1}\right) = (-1)^{\nu_1(a, b, c, d, \rho)}\left(\frac{2}{a_1}\right) \left(\frac{bd_1}{a_1}\right),
$$
where 
$$
\nu_1(a, b, c, d, \rho)\equiv \frac{a_1-1}{2}\cdot\frac{r_1 - 1}{2}\bmod 2
$$
and
$$
r_1 = a_1c+2bd_1.
$$
Similarly, we write $d_1$ as 
$$
d_1 = 2^ed_2,
$$
where $d_2$ is odd, and compute that
$$
\left(\frac{d_1}{a_1c+2bd_1}\right) = (-1)^{\nu_2(a, b, c, d, \rho)} \left(\frac{d_1}{a_1c}\right),
$$
where now
$$
\nu_2(a, b, c, d, \rho)\equiv e\frac{r_1^2-1}{8}+\frac{d_2-1}{2}\cdot\frac{r_1 - 1}{2}+\frac{d_2-1}{2}\cdot\frac{a_1c - 1}{2}+e\frac{a_1^2c^2-1}{8}\bmod 2.
$$
We thus have
$$
[wz] \sim (-1)^{\nu_1+\nu_2}\left(\frac{2}{a_1}\right) \left(\frac{-2bc}{\rho}\right)\left(\frac{b}{a_1}\right)\left(\frac{d_1}{c}\right)\gamma(w, z),
$$
which simplifies to
$$
[wz]\sim (-1)^{\nu_1+\nu_2+\nu_3}\left(\frac{-1}{\rho}\right)\left(\frac{b}{a}\right)\left(\frac{d}{c}\right)\gamma(w, z),
$$
where 
$$
\nu_3 = \nu_3(c, \rho)\equiv \frac{\rho-1}{2}\cdot\frac{c-1}{2}\bmod 2.
$$
It remains to show that
$$
(-1)^{\nu_1+\nu_2+\nu_3}\left(\frac{-1}{\rho}\right)
$$
depends only on the residue classes of $a, b, c, d$ modulo $16$. First note that whether $e = 0$, $e = 1$, or $e\geq 2$ depends only on the residue class of $d$ modulo $4$ (and hence also modulo $16$). Hence we can split into cases $e = 0$, $e = 1$, and $e\geq 2$.
\\\\ 
Note that if $e\geq 2$ or $e = 1$ and $b\equiv 0\bmod 2$, then $r_1 \equiv a_1c\bmod 8$. Using this observation and the definitions of $\nu_1$, $\nu_2$, and $\nu_3$, we find that
$$
\nu_2\equiv
\begin{cases}
\frac{d_1-1}{2} \bmod 2& \text{if }e = 0\text{ and }b\equiv 1\bmod {2}\\
1 \bmod 2 & \text{if }e = 1\text{ and }b\equiv 1\bmod {2}\\
0 \bmod 2& \text{otherwise.}
\end{cases}
$$ 
First suppose $e\geq 2$. Then $r_1 \equiv a_1c\bmod 8$ and $\nu_2 \equiv 0\bmod 2$. Suppose first that $c\equiv 1\bmod 4$. Then $\nu_3 \equiv 0\bmod 2$ as well. Moreover, $a_1\equiv r_1\bmod 4$, so that 
$$
\nu_1\equiv \frac{a_1-1}{2}\cdot\frac{a_1-1}{2}\equiv \frac{a_1-1}{2} \bmod 2.
$$
Finally, as $a = a_1\rho$,
$$
\left(\frac{-1}{a}\right) = \left(\frac{-1}{a_1}\right) \left(\frac{-1}{\rho}\right)  
$$
and so $\nu_1+(\rho-1)/2\equiv (a-1)/2\bmod 2$. Now suppose $c\equiv 3\bmod 4$. Then $\rho$ and $c\rho$ are odd and different modulo $2$, and so $\nu_3+(\rho-1)/2\equiv 1 \bmod 2$. Moreover, $r_1\equiv 3a_1\bmod 4$, so that $r_1$ and $a_1$ are odd and different modulo $4$. Hence at least one of $(r_1-1)/2$ and $(a_1-1)/2$ is $0 \bmod 2$ and so $\nu_1 = 0$. Collecting these results, we get  
$$
\nu_1+\nu_2+\nu_3+\frac{\rho-1}{2}\equiv
\begin{cases}
\frac{a-1}{2} \bmod 2& \text{if }c\equiv 1\bmod {4}\\
1 \bmod 2& \text{if }c\equiv 3\bmod {4}.
\end{cases}
$$
Now suppose $e = 1$. Then splitting into cases similarly as above, we get 
$$
\nu_1+\nu_2+\nu_3+\frac{\rho-1}{2}\equiv
\begin{cases}
\frac{a-1}{2} \bmod 2& \text{if }b\equiv 0\bmod 2\text{ and }c\equiv 1\bmod {4}\\
0 \bmod 2& \text{if }b\equiv 0\bmod 2\text{ and }c\equiv 3\bmod {4}\\
\frac{a-1}{2}+1 \bmod 2& \text{if }b\equiv 1\bmod 2\text{ and }c\equiv 1\bmod {4}\\
1 \bmod 2& \text{if }b\equiv 1\bmod 2\text{ and }c\equiv 3\bmod {4}.
\end{cases}
$$
Finally, suppose $e = 0$. Then
$$
\nu_1+\nu_2+\nu_3+\frac{\rho-1}{2}\equiv
\begin{cases}
\frac{a-1}{2} \bmod 2& \text{if }b\equiv 0\bmod 2\text{ and }c\equiv 1\bmod {4}\\
0 \bmod 2& \text{if }b\equiv 0\bmod 2\text{ and }c\equiv 3\bmod {4}\\
\frac{d-1}{2} \bmod 2& \text{if }b\equiv 1\bmod 2\text{ and }c\equiv 1\bmod {4}\\
\frac{a-1}{2}+\frac{d-1}{2} \bmod 2& \text{if }b\equiv 1\bmod 2\text{ and }c\equiv 3\bmod {4}.
\end{cases}
$$
This proves the lemma.
\end{proof}

\subsection{Proof of Proposition~\ref{propB}}
We are now ready to conclude the proof of Proposition~\ref{propB}. The bilinear sum \eqref{sumB} can be written as
$$B(M, N) = \sum_{k = 0}^3B_k(M, N),$$
where
\begin{equation}\label{sumBk}
B_k(M, N) = \sum_{w\in\DD(M)}\sum_{z\in\DD(N)}\alpha_w\beta_z[\varepsilon^{2k}wz]_{\phi, \psi}.
\end{equation}
Here $\alpha_w = \alpha_{(w)}$ and $\beta_{z} = \beta_{(z)}$, i.e., $\alpha_w$ (resp. $\beta_z$) depends only on the ideal generated by $w$ (resp. $z$).
\\\\
It is enough to estimate \eqref{sumBk} for each $0\leq k\leq 3$. First, suppose $u+v\sqrt{2}\succ 0$ is primitive and odd. Then by Proposition~\ref{almostMult}, we have 
$$
[\varepsilon^{2k}(u+v\sqrt{2})]\sim [u+v\sqrt{2}][\varepsilon^{2k}]\gamma(\varepsilon^{2k}, u+v\sqrt{2})\sim [u+v\sqrt{2}].
$$
We write $w = a+b\sqrt{2}$ and $z=c+d\sqrt{2}$ and split \eqref{sumBk} into $8^2\cdot 16^2$ sums by fixing congruence classes of $a$, $b$, $c$, and $d$ modulo $16$ (where the congruence classes of $a$ and $c$ are invertible). Then it suffices to estimate each sum
$$\sum_{\substack{w\in\DD(M)\\ w\equiv w_0\bmod 16}}\sum_{\substack{z\in\DD(N)\\ z\equiv z_0\bmod 16}}\alpha_w\beta_z[wz].$$
Unless both $w$ and $z$ are primitive, $wz$ is not primitive, and hence $[wz] = 0$ . Using Proposition~\ref{almostMult} again and replacing $\alpha_w$ by $\alpha_w[w]\ONE(w\equiv w_0\bmod 16)$ and $\beta_z$ by $\beta_z[z]\ONE(w\equiv w_0\bmod 16)$, it now suffices to estimate sums of the type
$$
\suma_{\substack{w\in\DD(M)}}\suma_{\substack{z\in\DD(N)}}\alpha_w\beta_z\gamma(w, z).
$$
This is exactly a sum of the type $Q(M, N; \alpha, \beta)$ as in Lemma~\ref{generalQsum3}, and so Proposition~\ref{propB} follows. This completes the proof of Theorem~\ref{mainThm2} and hence also Theorem~\ref{mainThm}.

\section{Counting primes}\label{Countingprimes}
In this section we give evidence that a governing field for the $16$-rank of the family $\{\QQP\}_{p\equiv 3(4)}$ does \textit{not} exist. To explain why, we first define a prime counting function. Suppose $M/\QQ$ is a normal extension. Let $S$ be a subset of $\Gal(M/\QQ)$ which is a union of conjugacy classes. We define
$$
\pi(M, S, X): = \#\{p\leq X:\text{the Artin class of }p\text{ in }\Gal(M/\QQ) \text{ is a subset of } S\}
$$
Given any normal extension $M/\QQ$ of degree $d$ and a subset $S$ of $\Gal(M/\QQ)$ stable under conjugation, the \v{C}ebotarev Density Theorem using the best known zero-free regions of $L$-functions gives \cite[Th\a'{e}or\a`{e}me 2, p.\ 132]{JPS}, for some constant $c>0$,
$$
\pi(M, S, X) = \frac{\# S}{\# \Gal(M/\QQ)}\mathrm{Li}(X) + O(\#S X\exp(-cd^{-1/2}\log^{1/2}X)).
$$
Hence given any two subsets $S_1$ and $S_2$ of $\Gal(M/\QQ)$ which are stable under conjugation and of the same cardinality,
$$
\pi(M, S_1, X) - \pi(M, S_2, X) \ll \#S_1 X\exp(-cd^{-1/2}\log^{1/2}X)
$$
is the best known bound. Note that this bound is weaker than $X^{1-\delta}$ for any $\delta>0$. For instance, it is \textit{not} known if
$$
\#\left\{p\leq X \text{ prime}:\ p\equiv 1\bmod 4\right\} - \#\left\{p\leq X \text{ prime}:\ p\equiv -1\bmod 4\right\} \ll X^{0.9999}.
$$
However, we have the following result.
\begin{theorem}\label{countPrimes}
Suppose that there exists a governing field $M$ for the $16$-rank of the family $\{\QQP\}_{p\equiv 3(4)}$. Then there exist disjoint subsets $S_1$ and $S_2$ of $\Gal(M/\QQ)$ which are stable under conjugation and of equal size such that 
$$\pi(M, S_1, X) - \pi(M, S_2, X) \ll X^{\frac{199}{200}}$$
\end{theorem}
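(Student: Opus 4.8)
The plan is to recognize that, once a governing field $M$ is available, the difference $\pi(M,S_1,X)-\pi(M,S_2,X)$ for a well-chosen pair $S_1,S_2$ is literally the sum $\sum_{p\le X,\ p\equiv -1(16)}e_p$ bounded in Theorem~\ref{mainCor}; everything else is bookkeeping.

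First I would normalize $M$. If $M_0$ is any governing field for the $16$-rank, then so is $M_0\cdot\QQ(\zeta_{16})$, because the $16$-rank of $\CL(-8p)$, being a function of the Artin class $(p,M_0/\QQ)$, is then also a function of $(p,M_0\QQ(\zeta_{16})/\QQ)$ via restriction to $M_0$; so I may assume $M\supseteq\QQ(\zeta_{16})\supseteq\QQ(i)$. Then the restriction map $\mathrm{res}\colon\Gal(M/\QQ)\twoheadrightarrow\Gal(\QQ(\zeta_{16})/\QQ)\cong(\ZZ/16\ZZ)^{\times}$ satisfies $\mathrm{res}((p,M/\QQ))=p\bmod 16$ for every prime $p$ unramified in $M$. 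Next, recalling that the $2$-part of $\CL(-8p)$ is cyclic, one has $\rk_{16}\CL(-8p)=1$ if and only if $16\mid h(-8p)$, and $16\mid h(-8p)$ implies $8\mid h(-8p)$, hence $p\equiv -1\bmod 16$ by R\'{e}dei--Reichardt. So I may take the class function $f\colon\Gal(M/\QQ)\to\{0,1\}$ with $\rk_{16}\CL(-8p)=f((p,M/\QQ))$ for all unramified $p\equiv 3\bmod 4$, and arrange that $f$ vanishes on $\Gal(M/\QQ)\setminus\mathrm{res}^{-1}(-1)$, without changing any value taken at a Frobenius class. I would then put
\[
S_1:=\{\sigma\in\Gal(M/\QQ):\ \mathrm{res}(\sigma)=-1,\ f(\sigma)=1\},\qquad S_2:=\{\sigma\in\Gal(M/\QQ):\ \mathrm{res}(\sigma)=-1,\ f(\sigma)=0\},
\]
which are disjoint and stable under conjugation because $\mathrm{res}$ and $f$ are class functions.

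The next step is to evaluate the two prime-counting functions. If $(p,M/\QQ)\subseteq S_1\cup S_2$ then $\mathrm{res}((p,M/\QQ))=-1$, so $p\equiv -1\bmod 16$ and in particular $p\equiv 3\bmod 4$; thus the governing property applies and gives $(p,M/\QQ)\subseteq S_1\iff\rk_{16}\CL(-8p)=1\iff 16\mid h(-8p)$, while $(p,M/\QQ)\subseteq S_2\iff p\equiv -1\bmod 16\text{ and }16\nmid h(-8p)$. Absorbing the finitely many primes ramified in $M$ into an $O(1)$ error, I obtain
\[
\pi(M,S_1,X)-\pi(M,S_2,X)=\sum_{\substack{p\le X\\ p\equiv -1\,(16)}}e_p+O(1)\ll X^{\frac{199}{200}}
\]
by Theorem~\ref{mainCor}. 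For $\#S_1=\#S_2$ I would use the \v{C}ebotarev Density Theorem, which gives that the primes with Frobenius in $S_i$ have density $\#S_i/\#\Gal(M/\QQ)$; on the other hand, the primes counted by $\pi(M,S_1,\cdot)$ form the set $\{p:\ p\equiv 3\bmod 4,\ 16\mid h(-8p)\}$ (again using $16\mid h(-8p)\Rightarrow p\equiv -1\bmod 16$), of density $\rho(16)=\tfrac1{16}$ by Theorem~\ref{mainCor}, whereas the primes counted by $\pi(M,S_2,\cdot)$ form $\{p\equiv -1\bmod 16\}\setminus\{p:\ 16\mid h(-8p)\}$, of density $\tfrac18-\tfrac1{16}=\tfrac1{16}$ by Dirichlet's theorem; so the two densities coincide and $\#S_1=\#S_2$.

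There is no step of real difficulty here; the one genuine idea is identifying $\pi(M,S_1,X)-\pi(M,S_2,X)$ with the sum controlled by Theorem~\ref{mainCor}. The step I would take most care over is the normalization $M\supseteq\QQ(\zeta_{16})$ together with checking that restricting to $\mathrm{res}(\sigma)=-1$ simultaneously isolates exactly the primes $p\equiv -1\bmod 16$ and keeps us inside the family $p\equiv 3\bmod 4$, on which both the governing property and the identity with $e_p$ are available.
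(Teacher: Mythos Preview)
Your argument is correct and follows the same idea as the paper's proof: define $S_1$ and $S_2$ as the conjugacy-stable pieces of $\Gal(M/\QQ)$ corresponding to $\rk_{16}\CL(-8p)=1$ and to $\rk_8\CL(-8p)=1$ but $\rk_{16}\CL(-8p)=0$, then identify the difference of prime counts with the sum bounded in Theorem~\ref{mainCor}. The paper's own proof is two sentences long and omits the bookkeeping you supply; in particular, your normalization $M\supseteq\QQ(\zeta_{16})$ and your verification that $\#S_1=\#S_2$ (via \v{C}ebotarev and the density $\rho(16)=\tfrac{1}{16}$) are details the paper leaves implicit but that are needed for the statement as written.
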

\begin{proof}
We simply let $S_1$ be the union of Artin classes $c_p$ for primes $p$ satisfying $\rk_{16}\CL(-8p) = 1$ and $S_2$ be the union of Artin classes $c_p$ for primes $p$ satisfying $\rk_8\CL(-8p) = 1$ but $\rk_{16}\CL(-8p) = 0$. The result now immediately follows from Theorem~\ref{mainCor}.   
\end{proof}
However, with our current methods of complex analysis applied to $L$-functions, we are not able to produce an error term of the form $O(x^{1-\delta_M})$ for any $\delta_M>0$. This leads us to believe that a governing field $M$ for the $16$-rank of the family $\{\QQP\}_{p\equiv 3(4)}$ is unlikely to exist.

\bibliographystyle{plain}
\bibliography{sixteenrankreferencesQ}

\begin{thebibliography}{10}

\bibitem{CohenLenstra}
H.~Cohen and H.~W. Lenstra, Jr.
\newblock Heuristics on class groups of number fields.
\newblock In {\em Number theory, {N}oordwijkerhout 1983 ({N}oordwijkerhout,
  1983)}, volume 1068 of {\em Lecture Notes in Math.}, pages 33--62. Springer,
  Berlin, 1984.

\bibitem{CohnLag}
H.~Cohn and J.~C. Lagarias.
\newblock On the existence of fields governing the {$2$}-invariants of the
  classgroup of {${\bf Q}(\sqrt{dp})$} as {$p$} varies.
\newblock {\em Math. Comp.}, 41(164):711--730, 1983.

\bibitem{CohnLag2}
H.~Cohn and J.~C. Lagarias.
\newblock Is there a density for the set of primes {$p$} such that the class
  number of {${\bf Q}(\sqrt{-p})$} is divisible by {$16$}?
\newblock In {\em Topics in classical number theory, {V}ol. {I}, {II}
  ({B}udapest, 1981)}, volume~34 of {\em Colloq. Math. Soc. J\'anos Bolyai},
  pages 257--280. North-Holland, Amsterdam, 1984.

\bibitem{Cox}
D.~A. Cox.
\newblock {\em Primes of the form {$x^2 + ny^2$}}.
\newblock A Wiley-Interscience Publication. John Wiley \& Sons, Inc., New York,
  1989.
\newblock Fermat, class field theory and complex multiplication.

\bibitem{Dav}
H.~Davenport.
\newblock On a principle of {L}ipschitz.
\newblock {\em J. London Math. Soc.}, 26:179--183, 1951.

\bibitem{DavC}
H.~Davenport.
\newblock Corrigendum: ``{O}n a principle of {L}ipschitz``.
\newblock {\em J. London Math. Soc.}, 39:580, 1964.

\bibitem{FK2}
E.~Fouvry and J.~Kl{\"u}ners.
\newblock On the 4-rank of class groups of quadratic number fields.
\newblock {\em Invent. Math.}, 167(3):455--513, 2007.

\bibitem{FK1B}
E.~Fouvry and J.~Kl{\"u}ners.
\newblock On the negative {P}ell equation.
\newblock {\em Ann. of Math. (2)}, 172(3):2035--2104, 2010.

\bibitem{FK1}
E.~Fouvry and J.~Kl{\"u}ners.
\newblock The parity of the period of the continued fraction of {$\sqrt d$}.
\newblock {\em Proc. Lond. Math. Soc. (3)}, 101(2):337--391, 2010.

\bibitem{FI1}
J.~B. Friedlander and H.~Iwaniec.
\newblock The polynomial {$X^2+Y^4$} captures its primes.
\newblock {\em Ann. of Math. (2)}, 148(3):945--1040, 1998.

\bibitem{FIMR}
J.~B. Friedlander, H.~Iwaniec, B.~Mazur, and K.~Rubin.
\newblock The spin of prime ideals.
\newblock {\em Invent. Math.}, 193(3):697--749, 2013.

\bibitem{Gauss}
C.~F. Gauss.
\newblock {\em Disquisitiones arithmeticae}.
\newblock Springer-Verlag, New York, 1986.
\newblock Translated and with a preface by Arthur A. Clarke, Revised by William
  C. Waterhouse, Cornelius Greither and A. W. Grootendorst and with a preface
  by Waterhouse.

\bibitem{HKW}
F.~Halter-Koch, P.~Kaplan, and K.~S. Williams.
\newblock An {A}rtin character and representations of primes by binary
  quadratic forms. {II}.
\newblock {\em Manuscripta Math.}, 37(3):357--381, 1982.

\bibitem{Hasse}
H.~Hasse.
\newblock \"{U}ber die {K}lassenzahl des {K}\"orpers {$P(\surd -2p)$} mit einer
  {P}rimzahl {$p\not=2$}.
\newblock {\em J. Number Theory}, 1:231--234, 1969.

\bibitem{Janusz}
G.~J. Janusz.
\newblock {\em Algebraic number fields}.
\newblock Academic Press [A Subsidiary of Harcourt Brace Jovanovich,
  Publishers], New York-London, 1973.
\newblock Pure and Applied Mathematics, Vol. 55.

\bibitem{Lang}
S.~Lang.
\newblock {\em Algebra}, volume 211 of {\em Graduate Texts in Mathematics}.
\newblock Springer-Verlag, New York, third edition, 2002.

\bibitem{LW82}
P.~A. Leonard and K.~S. Williams.
\newblock On the divisibility of the class numbers of {$Q(\sqrt{-p})$} and
  {$Q(\sqrt{-2p})$} by {$16$}.
\newblock {\em Canad. Math. Bull.}, 25(2):200--206, 1982.

\bibitem{Marcus}
D.~A. Marcus.
\newblock {\em Number fields}.
\newblock Springer-Verlag, New York-Heidelberg, 1977.
\newblock Universitext.

\bibitem{Redei}
L.~R{\'e}dei.
\newblock Arithmetischer {B}eweis des {S}atzes \"uber die {A}nzahl der durch
  vier teilbaren {I}nvarianten der absoluten {K}lassengruppe im quadratischen
  {Z}ahlk\"orper.
\newblock {\em J. Reine Angew. Math.}, 171:55--60, 1934.

\bibitem{Reichardt}
H.~Reichardt.
\newblock Zur {S}truktur der absoluten {I}dealklassengruppe im quadratischen
  {Z}ahlk\"orper.
\newblock {\em J. Reine Angew. Math.}, 170:75--82, 1934.

\bibitem{JPS}
J.-P. Serre.
\newblock Quelques applications du th\'eor\`eme de densit\'e de {C}hebotarev.
\newblock {\em Inst. Hautes \'Etudes Sci. Publ. Math.}, (54):323--401, 1981.

\bibitem{SerreMinerva}
Jean-Pierre Serre.
\newblock Minerva {L}ectures 2012 - {J}.{P}. {S}erre {T}alk 1:
  {E}quidistribution.

\bibitem{Ste1}
P.~Stevenhagen.
\newblock Ray class groups and governing fields.
\newblock In {\em Th\'eorie des nombres, {A}nn\'ee 1988/89, {F}asc.\ 1}, Publ.
  Math. Fac. Sci. Besan\c con, page~93. Univ. Franche-Comt\'e, Besan\c con,
  1989.

\bibitem{Ste2}
P.~Stevenhagen.
\newblock Divisibility by {$2$}-powers of certain quadratic class numbers.
\newblock {\em J. Number Theory}, 43(1):1--19, 1993.

\bibitem{Vino}
I.~M. Vinogradov.
\newblock The method of trigonometrical sums in the theory of numbers.
\newblock {\em Trav. Inst. Math. Stekloff}, 23:109, 1947.

\bibitem{Vino2}
I.~M. Vinogradov.
\newblock {\em The method of trigonometrical sums in the theory of numbers}.
\newblock Dover Publications, Inc., Mineola, NY, 2004.
\newblock Translated from the Russian, revised and annotated by K. F. Roth and
  Anne Davenport, Reprint of the 1954 translation.

\end{thebibliography}
\end{document}